\newcommand{\nrn}{\rightarrow+\infty}
\newcommand{\xrn}{\xrightarrow}
\newcommand{\ER}{\mathbb {R}}\newcommand{\EN}{\mathbb {N}}
\newcommand{\PE}{\mathbb {P}}
\newcommand{\Z}{\mathbb {Z}}
\newcommand{\ES}{\mathbb{E}}
\newcommand{\psg}{\langle }
\newcommand{\psd}{\rangle }
\newcommand{\bx}{\bar{X}}
\newcommand{\Gn}{{n\gamma}}
\newcommand{\Gnp}{{(n+1)\gamma}}
\newcommand{\un}{\underline}
\newcommand{\gm}{\gamma}
\newcommand{\ab}{\alpha}
\newtheorem{theorem}{ \textnormal{\bf{T\scriptsize{HEOREM}}}}
\newtheorem{prop}{\textnormal{\bf{P\scriptsize{ROPOSITION}}}}
\newtheorem{lemme}{\textnormal{\bf{L\scriptsize{EMMA}}}}
\theoremstyle{definition}
\newtheorem{definition}{\textnormal{\bf{D}\scriptsize{EFINITION}}}
\theoremstyle{remark}
\newtheorem{Remarque}{\textnormal{\bf{R\scriptsize{EMARK}}}}
\author{Serge Cohen\footnote{ Institut de Mathématiques de  Toulouse CNRS  UMR 5219, Université de Toulouse $\&$ Université Paul Sabatier 118, route de Narbonne 31062 Cedex 9  E-mail: \texttt{Serge.Cohen@math.univ-toulouse.fr}},
Fabien Panloup\footnote{ Institut de Mathématiques de  Toulouse CNRS UMR 5219, Université de Toulouse $\&$ INSA Toulouse, 135, Avenue de Rangueil, 31077 Toulouse Cedex 4, E-mail: \texttt{fabien.panloup@math.univ-toulouse.fr}},
Samy Tindel\footnote{Institut {\'E}lie Cartan Nancy, Universit\'e de Lorraine, B.P. 239,
54506 Vand{\oe}uvre-l{\`e}s-Nancy Cedex, France, E-mail: \texttt{samy.tindel@univ-lorraine.fr}}
}
\title{\textbf{ Approximation of stationary solutions to SDEs driven by multiplicative fractional noise}}
\begin{document}
\maketitle
\begin{abstract} 
In  a previous paper, we studied the ergodic properties of an Euler scheme of a stochastic differential equation
with a Gaussian additive noise in order to approximate the stationary regime of such an equation. We now consider the case of multiplicative noise when the Gaussian process is a fractional Brownian Motion with Hurst parameter $H>1/2$ and obtain some (functional) convergence properties of some empirical measures of the Euler scheme to the  stationary solutions of such SDEs. 
\end{abstract}

\noindent \textit{Keywords}: stochastic differential equation; fractional Brownian motion; stationary process;  Euler scheme.

\noindent \textit{AMS classification (2000)}: 60G10, 60G15, 60H35.
\section{Introduction}
\noindent 
Stochastic Differential Equations (SDEs) driven by a fractional Brownian motion (fBm) have been introduced to model 
random evolution phenomena whose noise has long range dependence properties. 
Indeed, beyond the historical motivations in Hydrology and Telecommunication for the use of fBm (highlighted e.g in \cite{MVn68}), recent applications of dynamical systems driven by this process include challenging issues in Finance \cite{Gua06}, Biotechnology~\cite{Odde-al96} or Biophysics~\cite{Jeon-al11,Kou08}. As a consequence,  SDEs driven by fBm have been widely studied in a finite-time horizon during the last decades, and the reader is referred to~\cite{Nualart02,Coutin12} for nice overviews on this topic.

In a somehow different direction, the study of  the long-time behavior (under some stability properties) for fractional SDEs has been developed by Hairer (see \cite{hairer,hairer09}) and Hairer and Ohashi \cite{hairer2}, % (\textcolor{red}{voir pour ref lorsque $H<1/2$}
who built a way to define stationary solutions of these a priori non-Markov processes and to extend some of the tools of the Markovian theory to this setting. See also~\cite{Arnold98,crauel,GKN09} for another setting called 
random dynamical systems. The current article fits into this global aim, and starts from the following observation: 
the knowledge of the stationary regime being important for applications and essentially inaccessible in an explicit form, we propose to  build and to study a procedure for its approximation in the case of SDEs driven by fBm with a Hurst parameter $H>1/2$. This paper is following % in the continuity of
a similar previous work for  SDEs driven by more general noises % (Gaussian processes with ergodic increments)
but in the specific additive case  (see~\cite{cohen-panloup}).
%The motivation of the study of is to model and to understand random evolution systems whose
%In the last decade, have been widely studied in the literature. The main motivation of 
%In view to the modelling of random evolution systems with memory, stochastic differential equations driven by a fractional Brownian motion (fBm) are now widely used to manage the long range properties of the noise of the phenomenon. The knowledge of the steady state of such dynamical systems is then important to understand their long-time behavior. 

More precisely, we deal   with an  $\ER^d$-valued process $(X_t)_{t\ge0}$ which is a
solution to the following SDE
\begin{equation}\label{fractionalSDE0}
dX_t=b(X_t)dt+ \sigma(X_t)dB_t^H
\end{equation}
where $b:\ER^{d}\rightarrow\ER^d$ and $\sigma:\ER^{d}\rightarrow \mathbb{M}_{d,q} $  are (at least) continuous functions, and where $ \mathbb{M}_{d,q}$ is the  set of $d\times q$ real matrices. In~\eqref{fractionalSDE0}, $(B^H_t)_{t\ge0}$ is  a $q$-dimensional $H$-fBm  and for the sake of simplicity we assume
 $ \frac1{2} < H < 1$, which allows in particular to invoke Young integration techniques in order to define stochastic integrals with respect to $B^H$. Compared to~\cite{cohen-panloup} we handle here a fairly general diffusion coefficient $\sigma$, instead of the constant one considered previously. Classically the noise is called multiplicative in this setting, whereas it is called additive when $\sigma $ is constant.\\
Under some Hölder regularity assumptions on the coefficients (see Section \ref{mainresult} for details), (strong) existence and uniqueness hold for the solution to \eqref{fractionalSDE0} starting from $x_0\in\ER^d$. Classically for  any stochastic differential equation, a natural question arises: if we assume that some Lyapunov assumptions hold on the drift term, does it imply that $(X_t)_{t\ge0}$ has some convergence properties to a steady state  when $t\rightarrow+\infty$ ?\\
This question implies in particular to define rigorously a concept of steady state. For  equation \eqref{fractionalSDE0}, this work has been done in \cite{hairer2}: using the fact that, owing to the Mandelbrot representation, the evolution of the fBm can be represented through a Feller transition on a functional space ${\cal S}$, the authors show that a solution to \eqref{fractionalSDE0} can be built as the first coordinate of an homogeneous Markov process on the product space $\ER^d\times {\cal S}$. As a consequence, stationary regimes associated with   \eqref{fractionalSDE0} can be naturally defined as the first projection of invariant measures of this Markov process.
Furthermore,  the authors of \cite{hairer2} develop some specific theory on strong Feller and irreducibility properties to prove uniqueness of invariant measures in this context.\\
In the current article, our aim is to propose a way to approximate numerically the stationary solutions to equation  \eqref{fractionalSDE0}. To this end, we study some empirical occupation measures related to an Euler type approximation of  \eqref{fractionalSDE0} with step $\gamma > 0$. We show that, under some Lyapunov assumptions, this sequence of empirical measures converges almost surely to the distribution of the stationary solution of the discretized equation (denoted by $\nu^\gamma$) and that, when $\gamma\rightarrow0^+$, $\nu^\gamma$ converges in turn to the distribution of the stationary solution of \eqref{fractionalSDE0}.   This approach is the same as  in \cite{cohen-panloup}. However, the introduction of multiplicative noise has some important consequences on the techniques for proving the long-time stability of the Euler scheme. In particular, the main difficulty is to show that the long-time control of the dynamical system can be achieved independently of $\gamma$. In \cite{cohen-panloup}, this problem has been solved with the help of explicit computations % which  can be made
for an  Ornstein-Uhlenbeck type process. Because  the noise is multiplicative the computations of~\cite{cohen-panloup} are not feasible anymore and  we   use specific tools to obtain uniforms controls of discretized integrals with respect to the fBm.
Before going more precisely to the heart of the matter, let us mention that the numerical approximation of the stationary regime by occupation measures of Euler schemes is a classical  problem in a Markov setting including diffusions and Lévy driven SDEs (see $e.g.$ \cite{talay,LP1,LP2,lemaire2,PP1,panloup1}).

\section{Framework and main results}\label{mainresult}
This section is firstly devoted to specify the setting under which our computations will be performed. Namely, we give an account on differential equations driven by fractional Brownian motion and their related ergodic theory. Once this framework is recalled, we shall be able to state our main results.

\subsection{FBm and H\"older spaces}
For some fixed $H\in(\frac{1}{2},1)$, we consider $(\Omega,\mathcal{F},\PE)$ the canonical probability space associated with the fractional
Brownian motion indexed by $\ER$ with Hurst parameter $H$. That is,  $\Omega=\mathcal{C}_0(\ER)$ is the Banach space of continuous functions
vanishing at $0$ equipped with the supremum norm, $\mathcal{F}$ is the Borel sigma-algebra and $\PE$ is the unique probability
measure on $\Omega$ such that the canonical process $B^H=\{B^H_t=(B^{H,1}_t,\ldots,B^{H,q}_t), \; t\in \ER\}$ is a fractional Brownian motion with Hurst
parameter $H$.
In this context, let us recall that $B^H$ is a $q$-dimensional centered Gaussian process such that $B^H_0=0$, 
whose coordinates are independent and satisfy
\begin{equation}\label{eq:var-increm-fbm}
\mathbb{E}\left[  \left( B_t^{H,j} -B_s^{H,j}\right)^2\right]= |t-s|^{2H}, \quad\mbox{for}\quad s,t\in\ER.
\end{equation}
In particular it can be shown, by a standard application of Kolmogorov's criterion, that $B^H$ admits a continuous version
whose paths are $\theta$-H\"older continuous for any $\theta<H$.

Let us be more specific about the definition of H\"older spaces of continuous functions. Namely, our driving process $B^H$ lies into a space ${\cal C}^\theta$ defined as follows:
we denote by ${\cal C}^\theta(\ER_+,\ER^d)$ the set of functions $f:\ER_+\rightarrow\ER^d$ such that 
 $$\forall T>0,\quad \|f\|_{\theta,T}=\sup_{0\le s< t\le T}\frac{ |f(t)-f(s)|}{(t-s)^\theta}<+\infty,$$
where the Euclidean norm is denoted by $|\, .\,|$.
We recall that ${\cal C}^\theta(\ER_+,\ER^d)$ can be made into a non-separable complete metric space, whenever endowed with the distance $\delta_{\theta}$ % \textcolor{red}{avec le $\wedge$, la distance n'est pas issue d'une norme})
defined by
$$\delta_{\theta}(f,g)=\sum_{N\in\mathbb{N}}2^{-N}\left(1\wedge\left(\sup_{0\le t\le N}\|f(t)-g(t)\|+\|f-g\|_{\theta,N}\right)\right),$$
where $ x \wedge y =\min(x,y) \;\forall x,\,y \in \mathbb R.$ 
However, since separable spaces are crucial for convergence in law issues, we will work in fact with a smaller space $\bar{\cal C}^\theta(\ER_+,\ER^d)$: we say that a function $f$ in ${\cal C}^\theta(\ER_+,\ER^d)$ belongs to $\bar{\cal C}^\theta(\ER_+,\ER^d)$ if
\begin{equation}\label{eq:def-bar-C-theta}
\forall\, T>0,\quad \omega_{\theta,T}(f,\delta):=\sup_{0\le s<t<T,0\le |t-s|\le\delta}\frac{|f(t)-f(s)|}{|t-s|^\theta}\xrn{\delta\rightarrow0}0.
\end{equation}
$\bar{\cal C}^\theta(\ER_+,\ER^d)$ is a closed separable subspace of ${\cal C}^\theta(\ER_+,\ER^d)$.
%  \textcolor{red}{Rq : ces points sont à rediscuter.}
 
\subsection{Differential equations driven by fBm}

  We recall now some results on existence and uniqueness of solutions of the stochastic differential equation~\eqref{fractionalSDE0}  starting from a deterministic point.
  
When $B^{H}$ is a fractional Brownian motion with Hurst parameter  $H>1/2$, equations of the form~\eqref{fractionalSDE0} are classically solved by interpreting the stochastic integral $\int_{0}^{t}\sigma(X_{u})\, dB_{u}^{H}$ as a Young integral (see e.g \cite{FV-bk}). The usual set of assumptions on the coefficients $b$ and $\sigma$ are then of Lipschitz and boundedness types.

Specifically, we recall the following definition of a $(1+\alpha)$-Lipschitz function:
\begin{definition}
Let  $\sigma:\ER^{d}\rightarrow \mathbb{M}_{d,q} $  be a ${\cal C}^{1}$ function and $ 0 < \alpha < 1$. We say that $\sigma $ is  $(1+\alpha)$-Lipschitz if  the following norm is finite:
\begin{equation}
  \label{eq:1+gammaLips}
  \|\sigma\|_{1 + \alpha}= \sup_{ x \in \mathbb R^d } \|D \sigma(x) \| + \sup_{x, y \in \mathbb R^d}\frac{|D \sigma(x)-D \sigma(y)|}{|x-y|^\alpha}.
\end{equation}
\end{definition}
With this definition the basic existence and uniqueness result in a finite horizon $[0,T]$  for $T>0$  for pathwise equations driven by $\theta$-H\"older functions  with $\theta>1/2$ can be found in~\cite{Coutin12,Lyons94}. 
Nevertheless in this article we are searching for stationary solutions, which have to be defined on $\ER_+.$
Moreover we use  ergodic results that require some damping effect of the continuous drift coefficient $b$. In order to quantify this notion, let us now introduce  a long-time stability assumption $(\mathbf{C})$. Namely,  let ${\cal E}\!{\cal Q}(\ER^d)$ denote the set of {\em Essentially Quadratic} functions, that is ${\cal C}^2$-functions
$V:\ER^d\rightarrow (0,\infty)$ such that 
\[
\liminf_{|x|\rightarrow+\infty} \frac{V(x)}{|x|^2}>0, \qquad \abs[\nabla V]\le C \sqrt{V}\quad \mbox{  and }\quad D^2V \mbox{ is  bounded.}
 \]
\noindent Note that any element  $V\in {\cal E}\!{\cal Q}(\ER^d)$ is continuous, and thus attains its positive minimum $\underline{v}>0$ so that,  for any $A, r>0$, there exists a real constant $C_{_{A,r}}$ such that $A+V^r \le C_{_{A,r}}V^r$.

With these notions in mind, our standing assumptions on the coefficients $b$ and $\sigma$ are summarized as:

 \noindent $\mathbf{(C)}$ The map $\sigma$ is assumed to be a  bounded Lipschitz continuous function.
Moreover we suppose that there exists $V\in {\cal E}\!{\cal Q}(\ER^d)$ such that
 \begin{itemize}
 \item[(i)]$ \forall x\in\ER^d \quad |b(x)|^2\le V(x) \; , $
 \item[(ii)] and such that for  $\beta\in\ER$ and $\alpha>0$ the following relation holds: 
 $$\forall x\in\ER^d\quad \psg \nabla V(x),b(x)\psd\le  \beta-\alpha V(x).$$
 \end{itemize}

\begin{prop}\label{prop:exist-uniq-smooth-coeff}
Let us suppose that in addition to assumption $(\mathbf{C}),$ $ b $ is Lipschitz continuous and that $\sigma$ is $(1+\alpha)$-Lipschitz
with  $ \alpha > \frac1{H} - 1$. Then

\noindent\emph{(i)} 
For any deterministic function $ B \in {\cal C}^\theta(\ER_+,\ER^q) $ with $ \theta > \frac12, $ and any $ x_0 \in \ER^d ,$  there exists a unique solution  $X\in {\cal C}^\theta(\ER_+,\ER^d) $ of
\begin{equation}
  \label{eq:sd-det}
   X_t= x_0 + \int_{0}^{t} b(X_u) du + \int_{0}^{t} \sigma(X_u)dB_u,
\end{equation}
where the integrals are interpreted in the Riemann-Stieljes sense.

\noindent\emph{(ii)}
Let us set  $X\equiv \Phi(x_0,B)$, so that $ \Phi(x_0,B) $  satisfies
$$ 
\Phi(x_0,B)_t = x_0 + \int_0^t b(\Phi(x_0,B)_s) ds +   \int_0^t  \sigma(\Phi(x_0,B)_s) d B_s. 
$$ 
Then the so-called Itô map $  \Phi $ is continuous from $ \ER^d \times  {\cal C}^\theta(\ER_+,\ER^q) $ into $  {\cal C}^\theta(\ER_+,\ER^d)$.
\end{prop}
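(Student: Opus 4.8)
The plan is to reduce both assertions to the classical finite-horizon theory of Young differential equations recalled above and then to globalize. First I would fix an arbitrary horizon $T>0$ and apply the existence and uniqueness result for pathwise equations driven by $\theta$-H\"older functions with $\theta>\frac12$ (see \cite{Coutin12,Lyons94}): since $b$ is Lipschitz, $\sigma$ is bounded and $(1+\alpha)$-Lipschitz with $\alpha>\frac1H-1$, and $B\in\mathcal{C}^\theta(\ER_+,\ER^q)$, the integral $\int_0^\cdot\sigma(X_u)\,dB_u$ is well defined in the Young sense and there is a unique $X^{(T)}\in\mathcal{C}^\theta([0,T],\ER^d)$ solving \eqref{eq:sd-det} on $[0,T]$. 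The regularity budget $\alpha>\frac1H-1$ is what allows one to operate at a H\"older exponent $\theta$ for which $\theta(1+\alpha)>1$, so that the underlying Young estimates close up; the boundedness of $\sigma$ together with the linear growth of $b$ furnishes the a priori bounds ruling out any finite-time explosion, whence the solution lives on the whole of $[0,T]$.

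The second step is to build the solution on $\ER_+$ by patching. For $0<T_1<T_2$, the restriction $X^{(T_2)}|_{[0,T_1]}$ solves \eqref{eq:sd-det} on $[0,T_1]$ with the same initial datum $x_0$, so uniqueness forces $X^{(T_2)}|_{[0,T_1]}=X^{(T_1)}$. These solutions being consistent, I would define $X_t:=X^{(T)}_t$ for any $T\ge t$; this is unambiguous and yields $X\in\mathcal{C}^\theta(\ER_+,\ER^d)$, since $\|X\|_{\theta,T}=\|X^{(T)}\|_{\theta,T}<+\infty$ for every $T$ (finiteness is required separately on each compact, not uniformly in $T$). Global uniqueness is then immediate: any solution on $\ER_+$ restricts to a solution on each $[0,T]$ and hence coincides there with $X^{(T)}$. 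This proves \emph{(i)}, and we set $X\equiv\Phi(x_0,B)$.

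For \emph{(ii)}, I would again rely on the finite-horizon version of the statement, namely the continuity of the It\^o map $\Phi_N:\ER^d\times\mathcal{C}^\theta([0,N],\ER^q)\to\mathcal{C}^\theta([0,N],\ER^d)$ for each fixed $N$, which is part of the classical Young theory under the present assumptions. By the uniqueness of \emph{(i)} one has $\Phi(x_0,B)|_{[0,N]}=\Phi_N(x_0,B|_{[0,N]})$. Now take $(x_0^n,B^n)\to(x_0,B)$ in $\ER^d\times\mathcal{C}^\theta(\ER_+,\ER^q)$; since $\delta_\theta$-convergence entails $\sup_{0\le t\le N}|B^n(t)-B(t)|+\|B^n-B\|_{\theta,N}\to0$ for each $N$, continuity of $\Phi_N$ gives $\sup_{0\le t\le N}|\Phi(x_0^n,B^n)(t)-\Phi(x_0,B)(t)|+\|\Phi(x_0^n,B^n)-\Phi(x_0,B)\|_{\theta,N}\to0$ for every $N$. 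Each summand in the series defining $\delta_\theta$ is bounded by $2^{-N}$ and tends to $0$, so dominated convergence in the sum yields $\delta_\theta(\Phi(x_0^n,B^n),\Phi(x_0,B))\to0$, i.e. the continuity of $\Phi$ on $\ER_+$.

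The main obstacle is concentrated entirely in the finite-horizon continuity of the It\^o map, which is the genuinely analytic ingredient: establishing it requires the Young--Lo\`eve estimate applied to $\int_s^t(\sigma(X_u)-\sigma(Y_u))\,dB_u$, and controlling the H\"older seminorm of $\sigma(X)-\sigma(Y)$ forces one to expand this difference through $D\sigma$ and to exploit the $\alpha$-H\"older regularity of $D\sigma$ encoded in $\|\sigma\|_{1+\alpha}$ --- this is precisely where the assumption $\alpha>\frac1H-1$ is sharp. Once this quantitative control is granted from \cite{Coutin12,Lyons94}, the passage to $\ER_+$ and to the metric $\delta_\theta$ is routine; the only point deserving care is that the H\"older constants are allowed to grow with the horizon, which is harmless for membership in $\mathcal{C}^\theta(\ER_+)$ and for $\delta_\theta$-continuity, both of which are defined compact by compact.
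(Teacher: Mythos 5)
Your overall architecture is sound and matches what the paper intends: solve on each compact horizon, patch by uniqueness, and lift the finite-horizon continuity of the It\^o map to $\ER_+$ through the metric $\delta_\theta$ by dominated convergence over the series. Those globalization steps are correct (the paper leaves them implicit), and you correctly identify where the exponent condition $\alpha>\frac1H-1$ enters, namely in making $\theta(1+\alpha)>1$ so that the Young estimates close.

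The genuine gap is your repeated appeal to \cite{Coutin12,Lyons94} for the finite-horizon existence, uniqueness and It\^o-map continuity ``under the present assumptions.'' The paper's remark immediately following Proposition \ref{prop:exist-uniq-smooth-coeff} states explicitly that those references handle \emph{bounded} smooth coefficients, and that no equivalent statement for an unbounded drift $b$ could be found in the literature; under $(\mathbf{C})$, $b$ is only Lipschitz/sublinear. So the key input of your proof is not citable --- it has to be proven. You half-address this for existence, by invoking a priori bounds coming from the linear growth of $b$ and the boundedness of $\sigma$ (this is exactly the Gronwall ingredient the paper refers to), but for uniqueness and above all for the continuity of $\Phi_N$ --- which you yourself single out as ``the genuinely analytic ingredient'' --- your proposal reduces to ``once this quantitative control is granted from \cite{Coutin12,Lyons94},'' which is precisely what is not granted. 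The repair is the one the paper indicates: carry out the Young--Lo\`eve estimate on $\int_s^t(\sigma(X_u)-\sigma(Y_u))\,dB_u$ directly and combine it with Gronwall's lemma, exploiting that $(\mathbf{C})$\emph{(i)} makes $b$ sublinear and that $\sigma$ is bounded Lipschitz; this is the same style of argument the paper executes in detail for the discretized equation in Section \ref{section3} (Lemmas \ref{gronwall}--\ref{lemme3}), and it cannot be replaced by a citation.
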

\begin{Remarque}
Proposition \ref{prop:exist-uniq-smooth-coeff} is not completely standard, when $b$ is not bounded, and we haven't been able to find a specific reference giving an equivalent statement in the literature. Namely the case of bounded smooth coefficients $b$ and $\sigma$ is handled e.g in~\cite{Coutin12,Lyons94}. If we move to the case of a dissipative coefficient $b$, an existence and uniqueness result is available in~\cite{hairer2}. Nevertheless, this result also assumes that the derivatives of $b$ are bounded. Assumption  $(\mathbf{C})\emph{(i)}$ implies that $b$ is sublinear.With the boundedness and Lipschitz assumption on $\sigma$ assumed in  $(\mathbf{C}),$  the proof of the existence of a global solution of this stochastic equation and of the continuity of the Itô map is a consequence of Young and Gronwall inequalities. 
\end{Remarque}

 \noindent
\subsection{Ergodic theory for SDEs driven by fBm}
We can now define the  solution of the stochastic differential equation starting from a random variable $X_0.$
Since the It\^o map of Proposition~\ref{prop:exist-uniq-smooth-coeff} is used in the following definition 
we have to suppose that  in addition to assumption $(\mathbf{C}),$ $ b $ is Lipschitz continuous and that $\sigma$ is $(1+\alpha)$-Lipschitz with  $ \alpha > \frac1{H} - 1$. 
\begin{definition}
\label{def:sol-eds-random-initial-condition} 
Let  $B^H$ be  a fractional Brownian motion with $H> \frac12.$ A process $(X_t)_{t \in \ER_+  }$
is called a solution of  equation \eqref{fractionalSDE0} driven by $B^{H}$ starting at $ X_0,$
if for every   $ 1/2 < \theta < H < 1,$ $(X_t)_{t \in \ER_+  }$ is almost surely  ${\cal C}^\theta(\ER_+,\ER^d)$-valued and if $X= \Phi(X_0,B^H)$, almost surely.
\end{definition}
We now have  all the tools to define rigorously a stationary solution to the SDEs driven by fBm. In the following definition and further on  we use the notation 
$\theta_t:\omega\mapsto\omega(t+.)$ for every $t\ge0$ for the time-shift .
\begin{definition}
\label{def:stat-sol}  Let  $(X_t)_{t\ge0}$ denote an $\ER^d$-valued solution to \eqref{fractionalSDE0} in the sense of Definition~\ref{def:sol-eds-random-initial-condition}. Let $\nu$ denote the distribution of $(X_t)_{t\ge0}$ on ${\cal C}^\theta(\ER_+,\ER^d).$ Then, $\nu$ is   called a stationary solution of  \eqref{fractionalSDE0} if it is invariant under the time-shift. Such a stationary solution is called adapted, if for $ 0 \le t $ the processes $(X_s)_{0\le s\le t}$ and $ (B^H_{s})_{s \ge t} $ are  conditionally independent given $ (B^H_{s})_{s \le t} $. 
\end{definition}
Please note that there is an abuse of language in the preceding definition. The distribution of a process $(X_t)_{t\ge0}$ on ${\cal C}^\theta(\ER_+,\ER^d)$  cannot determine alone  if  $(X_t)_{t\ge0}$ is a solution of  \eqref{fractionalSDE0} in the sense of Definition~\ref{def:sol-eds-random-initial-condition}. We need the distribution of the pair $(X_t,B^H)_{t\ge0}$ to know if   $X= \Phi(X_0,B^H)$, almost surely. In particular it is not possible to take $X_0$ independent of $(B^H)_{t\ge0}$ in general as remarked in Proposition 5 of~\cite{cohen-panloup}. Nevertheless we consider as in the Definition 2.4 in~\cite{hairer2} that two distributions $(X^1_t,B^H)_{t\ge0})$ and $ (X^2_t,B^H)_{t\ge0})$ on ${\cal C}^\theta(\ER_+,\ER^d) \times {\cal C}^\theta(\ER_+,\ER^q) $ solutions of \eqref{fractionalSDE0} are equivalent if the distribution of $ X^1$ and of $X^2$ are  the same.  These definitions are  the same as  definitions in~\cite{hairer2} that come
from Stochastic Dynamical Systems (SDS). In particular, we require adaptedness of solutions. Compared to Random Dynamical Systems (RDS) (see~\cite{Arnold98} for an introduction), this property is specific to SDS and is strongly linked 
to the fact that for such dynamical systems, one can associate a Markovian structure (with an enlargement of the space).
Here, the main consequence is that the uniqueness of the stationary solution can be obtained through the criterions of uniqueness of
the invariant distribution of this associated Markov process. Such results will be stated later.

\noindent Let $\gamma$ be a positive number, we will now discretize equation~\eqref{fractionalSDE0} as follows, for every $n \ge 0,$ 
 \begin{equation}
\label{fractionalSDE0-disc}
  Y_t^{\gamma}=Y_{\Gn}^{\gamma}+{(t-n\gamma)}b(Y_{n\gamma}^{\gamma})+\sigma(Y_{n\gamma}^{\gamma})(B^H_t-B^H_{n\gm})\quad\forall t\in[n\gamma,(n+1)\gamma).
 \end{equation}
We set $$\un{t}_{\gamma}= \max\{\gamma k,\gamma k\le t,k\in\EN\}.$$
In fact, we will usually write $\un{t}$ instead of $\un{t}_{\gamma}$ in the sequel.
The discretization of~\eqref{fractionalSDE0} can also be introduced with the following 
discretization $ \Phi^{\gamma}: \ER^d \times {\cal C}^\theta(\ER_+,\ER^q) \mapsto  {\cal C}^\theta(\ER_+,\ER^d) $ of the Itô map~: 
\begin{equation}
  \label{eq:phi-gamma}
   \Phi^{\gamma}(x_0,B)_t := x_0 + \int_0^t b(\Phi^{\gamma}(x_0,B)_{\un{s}_{\gamma}}) ds +   \int_0^t  \sigma(\Phi^{\gamma}(x_0,B)_{\un{s}_{\gamma}}) d B_s.
\end{equation}
%\textcolor{red}{Il faudra uniformiser les $B$ et les $B^H$}.
Please note that the definition of $  \Phi^{\gamma}$ does not involve any Riemann integration 
but only finite sums and that
\begin{equation}\label{eq:phi-gamma2}
 Y^{\gamma}=\Phi^{\gamma}(Y_0^{\gamma},(B^H_t)_{t\ge0}) \;\; a.s.
 \end{equation}
\noindent We now define \textit{stationary adapted solutions of \eqref{fractionalSDE0-disc}} in the spirit of the Definition \ref{def:stat-sol}. 
\begin{definition}
\label{def:stat-sol-disc}
Let $B^H$ denote a fractional Brownian motion with $H>1/2$ and let $X^\gamma$ be defined by $X^{\gamma}=\Phi^{\gamma}(X_0^{\gamma},(B^H_t)_{t\ge0})$. The distribution $\nu^\gamma$ of $X^\gamma$ on  ${\cal C}^\theta(\ER_+,\ER^d)$
is then called an adapted solution of \eqref{fractionalSDE0-disc} if the processes $(X_s^\gamma)_{0\le s\le t}$ and $ (B^H_{s})_{s \ge t} $
are  conditionally independent given $ (B^H_{s})_{s \le t} $. 
We will say that $\nu^\gamma$ is stationary if it is invariant by the shift maps $(\theta_{k\gamma})_{k\in\EN}.$  
\end{definition}

 Note that in this definition, there is a  slight abuse of language since we do not require the invariance by the shift maps $\theta_{t}$ for every $t\ge0$, but only when $t=k\gamma$, $k\in\EN$.

Let us introduce  the following uniqueness assumption for $\nu^\gamma$ and $\nu$:

\noindent $(\mathbf{S^\gamma})$ $(\gamma\ge0)$: There is at most one adapted stationary solution to \eqref{fractionalSDE0} (resp. to 
\eqref{eq:phi-gamma2}) if $\gamma=0$ (resp. if $\gamma>0$).

\noindent For $(\mathbf{S^0})$, we refer to Theorem 1.1. of  \cite{hairer2}.  When $\gamma>0$, we have the following proposition:
\begin{prop} 
\label{unicitemesinvariante}
Let $H\in(1/2,1)$. Assume that $d=q$ and that $b$ and $\sigma$ are ${\cal C}^2$-functions.
Assume that $\sigma$ is invertible and that $\sup_{x\in\ER^d} \sigma^{-1}(x)<+\infty$. Then, $(\mathbf{S^\gamma})$ holds for 
every $\gamma>0$.
\end{prop}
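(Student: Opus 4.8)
The plan is to recast the statement as a uniqueness result for the invariant measure of a Markov chain, and then to exploit the invertibility of $\sigma$ to verify the hypotheses that guarantee such uniqueness, following the strategy used for $(\mathbf{S^0})$ in Theorem~1.1 of~\cite{hairer2}.

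First I would recall the Markovian structure underlying the fractional noise. Thanks to the Mandelbrot representation of $B^H$ alluded to in the introduction, the increments of $B^H$ on $[\Gn,\Gnp]$ can be generated, conditionally on the past, by an autonomous Feller transition on the functional space ${\cal S}$ encoding the relevant memory of the noise. Together with~\eqref{fractionalSDE0-disc}, this realizes the pair $(Y^\gamma_{\Gn},\mathfrak{w}_n)$, where $\mathfrak{w}_n\in{\cal S}$ denotes the noise memory at time $\Gn$, as a homogeneous Markov chain on $\ER^d\times{\cal S}$ with some transition kernel ${\cal Q}^\gamma$. By the adaptedness requirement in Definition~\ref{def:stat-sol-disc}, adapted stationary solutions $\nu^\gamma$ are exactly the $\ER^d$-projections of invariant probability measures of ${\cal Q}^\gamma$; hence it suffices to prove that ${\cal Q}^\gamma$ admits at most one invariant measure. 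Since the noise component evolves autonomously and the stationary fBm memory is unique, any two invariant measures $\mu_1,\mu_2$ of ${\cal Q}^\gamma$ share the same ${\cal S}$-marginal $\PE_{\cal S}$; disintegrating, $\mu_i(dx,d\mathfrak{w})=\mu_i(dx\,|\,\mathfrak{w})\,\PE_{\cal S}(d\mathfrak{w})$, and the point is to show that the conditional kernels coincide for $\PE_{\cal S}$-a.e. $\mathfrak{w}$.

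This is where invertibility enters, through the analysis of a single step of the dynamics. Conditionally on $(Y^\gamma_{\Gn},\mathfrak{w}_n)=(y,\mathfrak{w})$, the increment $\xi:=B^H_{\Gnp}-B^H_{\Gn}$ is a Gaussian vector whose mean $m(\mathfrak{w})$ depends on the past but whose covariance $\bz_\gamma$ is a fixed positive-definite $d\times d$ matrix, the increments being genuinely random. Because $d=q$ and $\sigma$ is invertible, the affine map $\xi\mapsto y+\gamma b(y)+\sigma(y)\xi$ is a diffeomorphism of $\ER^d$; inverting it gives $\xi=\sigma^{-1}(y)\bigl(y'-y-\gamma b(y)\bigr)$ for a target $y'=Y^\gamma_{\Gnp}$. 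Consequently the one-step law of $Y^\gamma_{\Gnp}$ has a strictly positive smooth Lebesgue density on $\ER^d$, and the bound $\sup_{x}\|\sigma^{-1}(x)\|<+\infty$ keeps the required increment under control, so this density is bounded below on compacts uniformly in the conditioning. This yields simultaneously the irreducibility (every open set is reached with positive probability) and the regular, continuous dependence on the state that feed the uniqueness criterion of~\cite{hairer2}.

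The hard part, and the genuinely non-Markovian obstacle, is that invertibility controls only the state coordinate, whereas uniqueness requires the two noise memories of a coupling to merge as well. Since the past of the fBm cannot be reset in one step, one cannot conclude from a single-step minorization; instead I would run two copies of the chain with a shared future noise and set up an asymptotic coupling, using the invertibility to steer the state components so that they coincide while the memory components merge under $\PE_{\cal S}$. Establishing this asymptotic strong Feller behaviour, together with the accessibility of a common center furnished by assumption $(\mathbf{C})$ and the Lyapunov function $V$, and checking the requisite uniform lower bounds on the conditional Gaussian densities, is the technical core and the place where care is needed to keep all constants independent of the infinite-dimensional memory $\mathfrak{w}$.
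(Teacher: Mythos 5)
Your reduction to a Markov chain on an enlarged space and your irreducibility argument are both sound and essentially match the paper: the paper realizes $(\bar{X}_{\gamma k})$ as a skew-product $({\cal W},\PE_H^\gamma,{\cal P}^\gamma,\ER^d,\Phi^\gamma)$, where ${\cal W}=(\ER^d)^{\Z_-}$ is the space of past noise increments and $\Phi^\gamma(x,\tilde\omega)=x+\gamma b(x)+\sigma(x)\tilde\omega$ is the one-step update, and its topological irreducibility proof is exactly yours: solve $\Phi^\gamma(x,\tilde\omega_x)=y$ using the invertibility of $\sigma$, then use continuity of $b,\sigma$ and the fact that the conditional law $\bar{\cal P}^\gamma(\omega,\cdot)$ of the next increment given the past is Gaussian with positive variance, hence charges a small ball around $\tilde\omega_x$.

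The genuine gap is in your last paragraph. You correctly diagnose that a positive one-step density for the state coordinate cannot by itself give uniqueness, because the noise memories of two copies must also merge; but you then leave the entire uniqueness mechanism (\emph{asymptotic coupling}, \emph{asymptotic strong Feller}, uniform bounds \emph{independent of the infinite-dimensional memory}) as a program rather than a proof. Nothing in your text establishes it, and carrying it out by hand is a substantial piece of work. The paper closes exactly this gap by invoking Hairer's ready-made theory of non-Markovian skew-products \cite{hairer09}: Theorem 1.4.17 there states that a skew-product which is strong Feller and topologically irreducible (Definitions 1.4.6 and 1.4.7) has at most one adapted stationary law, and Theorem 1.5.9 gives checkable sufficient conditions for the strong Feller property. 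Verifying those conditions is what the hypotheses of the proposition are really for: the discrete Malliavin covariance of the one-step map is $M^\Phi(x,\tilde\omega)=(\sigma\sigma^*)(x)$, whose inverse determinant is bounded continuous because $\sigma^{-1}$ is bounded and continuous (this is where $d=q$ and invertibility are used beyond irreducibility); the derivatives $D_\omega\Phi$, $D_\omega D_x\Phi$ and $D_\omega^2\Phi$ are bounded continuous (using that $b$ and $\sigma$ are ${\cal C}^2$); and the spectral density $f$ of the stationary increment sequence $(\Delta_n^\gamma)$ satisfies $\int_{-\pi}^{\pi}f(x)^{-1}\,dx<+\infty$. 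This last condition on the Gaussian noise, which is essential for the strong Feller criterion and hence for the merging of memories you worry about, is entirely absent from your proposal. Without citing this machinery (or actually constructing the coupling it encapsulates), your argument does not reach the conclusion.
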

\noindent The proof, which is an application of \cite{hairer09}, is done in the appendix. 
%\textcolor{red}{Note also that if  a stationary solution $\nu^\gamma$ exists and if $(\mathbf{(S^\gamma)}$ holds, then, a discretely stationary solution
%is stationary.} 

\noindent Let us now focus on the construction of the approximation. We denote by $(\bx_t^{\gamma})_{t\ge0}$ the continuous-time Euler scheme defined by  $\bx_0^\gamma=x\in\ER^d$ and for every $n\ge0$
\begin{equation}
  \label{eq:euler-scheme-x}
  \bx_t^{\gamma}=\bx_{\Gn}^{\gamma}+{(t-n\gamma)}b(\bx_{n\gamma}^{\gamma})+\sigma(\bx_{n\gamma}^{\gamma})(B^H_t-B^H_{n\gm}) \quad\forall t\in[\Gn,\Gnp). 
\end{equation}
The process $ (\bar{X}_t^{\gamma})_{t\ge0}$ is a solution to~\eqref{fractionalSDE0-disc} such that $ \bar{X}_0^{\gamma}=x.$
In order to alleviate the notations and, when it is not confusing, we will usually write  $\bar{X}_{t}$  instead of $\bar{X}_t^{\gamma}$.
Now, we define a sequence of random
 probability measures $({\cal P}^{(n,\gamma)}(\omega,d\alpha))_{n\ge 1}$  on $\bar{{\mathcal C}}^\theta(\ER_+,\ER^d)$ with $\theta<H$  (recall that $\bar{{\mathcal C}}^\theta(\ER_+,\ER^d)$ is defined at \eqref{eq:def-bar-C-theta}) by
 $${\cal P}^{(n,\gamma)}(\omega,d\alpha)=\frac{1}{n}\sum_{k=1}^n
 {\delta}_{{\bar{X}}^\gamma_{\gamma(k-1)+.}(\omega)}
(d\alpha) 
$$ where 
 $\delta$ denotes the Dirac measure and where, for every $s\ge0$, ${\bar{X}}^\gamma_{s+.}:=({\bar{X}}^\gamma_{s+t})_{t\ge0}$ denotes the 
$s$-shifted process.\\  
% \noindent For $t\ge 0$,
%  the sequence $({\cal P}_t^{(n)}(\omega,dy))_{n\ge 1}$ of
%  \guil marginal" empirical measures  at time $t$ on $\ER^d$ is defined by
%  $${\cal P}^{(n,\gamma)}_t(\omega,dy)=\frac{1}{n}\sum_{k=1}^n
%  {\delta}_{\bar{X}^\gamma_{\gamma(k-1)+t}(\omega)}(dy).$$
We are now able to state the main theorem of this article:
\begin{theorem}\label{principal1} Let $1/2< \theta < H < 1$ and  assume $\mathbf{(C)}.$ 
If  $\mathbf{(S^\gamma)}$ holds for every $\gamma>0,$\\
\noindent (i)   then there exists $\gamma_0>0$ such that, for every $\gamma\in(0,\gamma_0)$,
$$\lim_{n\rightarrow+\infty}{\cal P}^{(n,\gamma)}(\omega,d\alpha)=\nu^\gamma(d\alpha)\quad a.s. \; \textnormal{ when  $n\rightarrow+\infty$},$$
where the convergence is  for the weak topology induced by $\bar{{\mathcal C}}^\theta(\ER_+,\ER^d)$ and where $\nu^\gamma$ is the stationary solution of \eqref{fractionalSDE0-disc}.\\
\noindent (ii) If additionally, $b$ is Lipschitz continuous, $\sigma$ is $(1+\alpha)$-Lipschitz with $\alpha>\frac{1}{H}-1$ and if $\mathbf{(S^0)}$ holds, then
$$\lim_{\gamma\rightarrow0} \nu^\gamma(d\alpha) =\nu(d\alpha)\quad a.s.$$
where the convergence is  for the weak topology induced by $\bar{{\mathcal C}}^\theta(\ER_+,\ER^d)$ and where $\nu$ denotes the  adapted stationary solution of  \eqref{fractionalSDE0}.
\end{theorem}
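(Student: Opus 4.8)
The plan is to establish the two convergence statements of Theorem~\ref{principal1} in turn, with the bulk of the work devoted to part (i), which is a long-time ergodic statement for fixed $\gamma$, and then to handle part (ii) as a small-step limit using the continuity of the It\^o map.

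For part (i), the overall strategy follows the classical Markov-chain-on-an-enlarged-space philosophy already described in the introduction. First I would recast the Euler scheme $\bx^\gamma$ as the spatial projection of a homogeneous Markov chain on $\ER^d\times{\cal S}$, where ${\cal S}$ is the functional state space carrying the fBm increments via the Mandelbrot representation; the empirical measures ${\cal P}^{(n,\gamma)}$ are then occupation measures of this chain, so almost-sure convergence reduces to an ergodic-type result plus identification of the limit. The heart of the matter is a \textbf{tightness} argument: I would show that the sequence $({\cal P}^{(n,\gamma)})_{n\ge1}$ is almost surely tight for the weak topology of $\bar{\cal C}^\theta(\ER_+,\ER^d)$. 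This requires two ingredients. The first is a Lyapunov/moment control: using assumption $\mathbf{(C)}$, and in particular the dissipativity $\langle\nabla V(x),b(x)\rangle\le\beta-\alpha V(x)$ together with $|b(x)|^2\le V(x)$ and the boundedness of $\sigma$, I would derive a bound of the form $\sup_n \frac1n\sum_{k=1}^n V(\bx^\gamma_{\gamma(k-1)})\le C$ a.s., uniformly in $n$, by a discrete comparison argument applied to $V(\bx^\gamma_{\Gn})$ and controlling the fBm contribution through Young estimates on the discretized stochastic integral. The second is a H\"older-modulus control, namely a uniform estimate on $\|\bx^\gamma_{\gamma(k-1)+\cdot}\|_{\theta,T}$ that transfers into equi-continuity of the modulus $\omega_{\theta,T}$ appearing in~\eqref{eq:def-bar-C-theta}, so that the limit measure is supported on the separable space $\bar{\cal C}^\theta$.

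With tightness in hand, I would identify every limit point as an adapted stationary solution of~\eqref{fractionalSDE0-disc}. Invariance under the shifts $\theta_{k\gamma}$ follows from the standard Echeverr\'ia--Weiss type computation comparing ${\cal P}^{(n,\gamma)}$ and its push-forward by $\theta_\gamma$, the difference being a telescoping boundary term of order $1/n$; adaptedness is inherited from the causal structure of the Euler scheme built from $(B^H_s)_{s\le t}$. Then invoking the uniqueness assumption $\mathbf{(S^\gamma)}$, which holds under the hypotheses by Proposition~\ref{unicitemesinvariante}, all limit points coincide with $\nu^\gamma$, and the a.s. tight sequence with a unique limit point converges a.s. to $\nu^\gamma$.

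For part (ii), I would prove $\nu^\gamma\to\nu$ as $\gamma\to0^+$ again by tightness plus identification. Tightness of $(\nu^\gamma)_{\gamma\in(0,\gamma_0)}$ comes from the \emph{uniform-in-$\gamma$} Lyapunov and H\"older bounds established in part (i) — this is exactly the point flagged in the introduction as the main new difficulty relative to the additive case, since the multiplicative $\sigma$ prevents explicit Ornstein--Uhlenbeck computations and forces uniform control of the discretized Young integrals $\int_0^t\sigma(\bx^\gamma_{\un s})\,dB^H_s$ independently of $\gamma$. To identify the limit I would use that $\Phi^\gamma\to\Phi$ in an appropriate sense: any weak limit $\nu^\star$ of $\nu^\gamma$ should satisfy $X=\Phi(X_0,B^H)$ by passing to the limit in~\eqref{eq:phi-gamma} using the continuity of the It\^o map from Proposition~\ref{prop:exist-uniq-smooth-coeff}, while stationarity passes to the limit from the discrete-shift invariance. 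Adaptedness of $\nu^\star$ is preserved under the limit, so $\nu^\star$ is an adapted stationary solution of~\eqref{fractionalSDE0}, and $\mathbf{(S^0)}$ forces $\nu^\star=\nu$. I expect the \emph{uniform-in-$\gamma$ Young/Lyapunov estimate} to be the main obstacle throughout, as it underlies both the tightness in part (ii) and the $n$-uniform control in part (i), and is precisely where the multiplicative structure obstructs the methods of~\cite{cohen-panloup}.
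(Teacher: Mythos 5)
Your overall architecture coincides with the paper's: finite-horizon contraction and H\"older estimates that are uniform in $\gamma$ (Proposition \ref{lemme4}), a.s. tightness of $({\cal P}^{(n,\gamma)})_{n}$ and of its limit set as $\gamma\to0$ (Proposition \ref{prop:ntendinfty}), identification of weak limits as adapted stationary solutions (Propositions \ref{prop2} and \ref{gamma-0}), and finally $(\mathbf{S^\gamma})$, $(\mathbf{S^0})$ to pin down the limits; you also correctly single out the uniform-in-$\gamma$ control of the discretized Young integrals as the crux. There are, however, two genuine gaps. The first concerns adaptedness of the weak limits in part (i): you assert it is ``inherited from the causal structure of the Euler scheme,'' but this is exactly the step that does not pass to the limit for free. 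The empirical measures are built from overlapping windows of a single noise path, and what has to be proved is a conditional-independence identity for the limiting pair $(X^{(\infty,\gamma)},B^H)$ --- equality \eqref{ekek} in the paper. Its proof requires showing that the conditional law $\omega\mapsto{\cal L}\bigl((B^{H,t+T}_s)_{s\le 0}\,|\,(B^{H,t}_s)_{s\le0}=\omega\bigr)$ is a Feller transition on the weighted H\"older spaces ${\cal W}_{\theta,\delta}$ of \cite{hairer2}, so that the conditioning survives weak convergence, plus a martingale law-of-large-numbers argument to replace $J_k$ by $\ES[J_k|{\cal F}_{\gamma(k-1)+t}]$ along the empirical sums. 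Without this, the claim that the limit is an \emph{adapted} stationary solution --- the very property that makes $(\mathbf{S^\gamma})$ applicable --- is unsupported.

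The second gap is in your long-time Lyapunov bound: $\limsup_n \frac1n\sum_{k\le n}V(\bx^\gamma_{\gamma(k-1)})\le C$ a.s. cannot follow from ``discrete comparison plus Young estimates'' alone. Those yield the one-block contraction $V(\bx^\gamma_{\ell})\le\rho\, V(\bx^\gamma_{\ell-1})+P_{1,\theta}(\|\delta_{\ell-1} B^H\|_{\theta,1})$ with $\rho<1$ (this is \eqref{VX}, the hard uniform-in-$\gamma$ estimate), whence $\frac1N\sum_{\ell<N} V(\bx^\gamma_\ell)$ is dominated by a Ces\`aro average of the stationary sequence $P_{1,\theta}(\|\delta_{\ell}B^H\|_{\theta,1})$; to bound that average \emph{almost surely} (and not merely in $L^1$) one needs the ergodicity of the increments of $B^H$ (Maruyama's theorem, cf.\ Remark \ref{maruyamaremark}) together with Fernique's lemma for integrability. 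The same ergodic ingredient is what forces the second marginal of the pair empirical measures $\tilde{\cal P}^{(n,\gamma)}$ to converge to the law of $B^H$ in the identification step, so it cannot be bypassed; your sketch gestures at ``an ergodic-type result'' only in the Markov-chain framing and never invokes it where it is needed. Two smaller points: that opening reduction to a Markov chain on $\ER^d\times{\cal S}$ is never used afterwards (nor in the paper's proof of Theorem \ref{principal1}, where the Markovian structure enters only the appendix proof of Proposition \ref{unicitemesinvariante}); and $(\mathbf{S^\gamma})$ is a hypothesis of the theorem, not a consequence of Proposition \ref{unicitemesinvariante} under the theorem's assumptions, since Theorem \ref{principal1} does not require $d=q$, $\sigma$ invertible with bounded inverse, or ${\cal C}^2$ coefficients.
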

%\textcolor{red}{
\begin{Remarque}  Note that some extensions can be deduced from the proof of this theorem. 
First, remark that this result implies in particular that 
$$\lim_{\gamma\rightarrow0^+}\,\lim_{n\rightarrow+\infty}{\cal P}_0^{(n,\gamma)}(\omega,dy)=\nu_0(dy)\quad a.s.$$
where
$${\cal P}_0^{(n,\gamma)}(\omega,dy)=\frac{1}{n}\sum_{k=1}^n\delta_{\bar{X}^\gamma_{(k-1)\gamma}(dy)}$$
and $\nu_0(dy)$ denotes the initial distribution of the stationary solution $\nu$ of \eqref{fractionalSDE0}. This marginal procedure will be numerically tested in Section 6.\\
Also note that some extensions can be deduced from the proof of this theorem. First, when uniqueness fails for the stationary solutions, the preceding result is replaced by

\begin{theorem}\label{principal} Assume $\mathbf{(C)}$.
\smallskip

\noindent  1.  Then, there exists $\gamma_0>0$ such that for every $\gamma\in(0,\gamma_0)$, $({\cal P}^{(n,\gamma)}(\omega,d\alpha))_{n\ge1}$ is $a.s.$ tight on $\bar{{\mathcal C}}^\theta(\ER_+,\ER^d),$ for every $1/2< \theta < H < 1$. Furthermore,
 every weak limit is a stationary adapted solution of~\eqref{fractionalSDE0-disc}.\\
 2.  If additionally, $b$ is Lipschitz continuous, $\sigma$ is $(1+\alpha)$-Lipschitz with $\alpha>\frac{1}{H}-1,$ set
 $${\cal U}^{\infty,\gamma}(\omega):=\{\textnormal{weak limits of 
$({\cal P}^{(n,\gamma)}(\omega,d\alpha))$}\}.$$
 Then there exists $\gamma_1\in(0,\gamma_0)$ such that $({\cal U}^{\infty,\gamma}(\omega))_{\gamma\le\gamma_1}$ is a.s. tight in   $\bar{{\mathcal C}}^\theta(\ER_+,\ER^d),$ and any weak limit when $\gamma\rightarrow0$ of $({\cal U}^{\infty,\gamma}(\omega))_{\gamma\le\gamma_1}$ is an adapted  stationary solution of~\eqref{fractionalSDE0}. 
 \end{theorem}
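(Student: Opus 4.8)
The plan is to treat the two assertions as two successive tightness-plus-identification arguments, the first at fixed $\gamma$ and $n\to+\infty$, the second as $\gamma\to0$, both resting on a single genuinely hard input: a \emph{uniform in $\gamma$} Lyapunov-type control of the Euler scheme~\eqref{eq:euler-scheme-x}. Concretely, for $\theta<\theta'<H$ I would introduce a path functional of the form
$$\TE(\alpha):=|\alpha(0)|^{p}+\sum_{N\in\EN}a_{N}\,\|\alpha\|_{\theta',N}^{p},$$
with an exponent $p>0$ and summable weights $(a_{N})_{N\in\EN}$, whose sublevel sets $\{\TE\le R\}$ are relatively compact in $\bar{\cal C}^{\theta}(\ER_+,\ER^d)$ (on each window the compact embedding ${\cal C}^{\theta'}\hookrightarrow\bar{\cal C}^{\theta}$ applies, and one diagonalises over $N$). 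The first step is to show that there is $\gamma_0>0$ such that for every $\gamma\in(0,\gamma_0)$ one has $\sup_{n\ge1}{\cal P}^{(n,\gamma)}(\omega,\TE)<+\infty$ a.s.: the Hölder norm of each shifted segment $\bx^{\gamma}_{\gamma(k-1)+\cdot}$ is controlled through the boundedness of $\sigma$, the bound $|b|^2\le V$ from $\mathbf{(C)}(i)$, and moments of the fBm increments, while the mean-reverting part of $\mathbf{(C)}(ii)$ keeps $\tfrac1n\sum_{k}V(\bx^{\gamma}_{\gamma(k-1)})$ a.s.\ bounded uniformly in $\gamma$. A Markov inequality then turns this bound into a.s.\ tightness of $({\cal P}^{(n,\gamma)})_{n\ge1}$. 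This is the step I expect to be the main obstacle: because the noise is multiplicative, the discretized integrals $\sum_{k}\sigma(\bx_{\gamma k})(B^{H}_{\gamma(k+1)}-B^{H}_{\gamma k})$ cannot be handled by the explicit Ornstein--Uhlenbeck computations of~\cite{cohen-panloup}, and the uniform-in-$\gamma$ estimate must be extracted from the specific controls of discrete Young sums over long time horizons.

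To identify the limit points I would lift the construction to the enlarged space that also records the driving noise, replacing each atom $\delta_{\bx^{\gamma}_{\gamma(k-1)+\cdot}}$ by $\delta_{(\bx^{\gamma}_{\gamma(k-1)+\cdot},\,B^{H}_{\gamma(k-1)+\cdot})}$ on $\bar{\cal C}^{\theta}(\ER_+,\ER^d)\times\bar{\cal C}^{\theta}(\ER_+,\ER^q)$, the $X$-marginal of any limit being the sought-for weak limit of ${\cal P}^{(n,\gamma)}$. Invariance under $\theta_{\gamma}$ of any weak limit $Q^{\gamma}$ follows from the telescoping identity $\theta_{\gamma}\#{\cal P}^{(n,\gamma)}-{\cal P}^{(n,\gamma)}=\frac1n(\delta_{\bx^{\gamma}_{\gamma n+\cdot}}-\delta_{\bx^{\gamma}_{\cdot}})$, whose total variation is $O(1/n)$ and whose atoms remain tight, so the boundary terms vanish; that $Q^{\gamma}$ is carried by solutions of~\eqref{fractionalSDE0-disc} comes from the identity~\eqref{eq:phi-gamma2} together with the continuity of $\Phi^{\gamma}$, and adaptedness is inherited from the adaptedness of the fBm increments (conditional independence being preserved under weak convergence of the joint law). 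This yields point~1; when $\mathbf{(S^\gamma)}$ additionally holds the single such $Q^{\gamma}$ equals $\nu^{\gamma}$, which recovers Theorem~\ref{principal1}(i).

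For point~2 I would run the same scheme uniformly in the step. The uniform Lyapunov control gives a $\gamma$-independent bound $\sup_{\gamma\le\gamma_1}\sup_{n}{\cal P}^{(n,\gamma)}(\omega,\TE)<+\infty$ for some $\gamma_1\in(0,\gamma_0)$, so every element of ${\cal U}^{\infty,\gamma}(\omega)$ lies in a fixed relatively compact sublevel set of $\TE$, whence $\bigcup_{\gamma\le\gamma_1}{\cal U}^{\infty,\gamma}(\omega)$ is a.s.\ tight. I then extract $Q^{\gamma}\in{\cal U}^{\infty,\gamma}$ with $Q^{\gamma}\Rightarrow Q$ along some $\gamma\to0$. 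To show $Q$ is an adapted stationary solution of~\eqref{fractionalSDE0}, I would use the stability of the It\^o map under discretization, namely $\Phi^{\gamma}(x_0,B)\to\Phi(x_0,B)$ in ${\cal C}^{\theta}$ as $\gamma\to0$, locally uniformly in $(x_0,B)$; this is exactly where the extra hypotheses ($b$ Lipschitz, $\sigma$ $(1+\alpha)$-Lipschitz with $\alpha>\frac1H-1$) enter, via Proposition~\ref{prop:exist-uniq-smooth-coeff} and Young/Gronwall estimates, and it forces limits of discretized solutions to solve~\eqref{fractionalSDE0}. Finally, the invariance of $Q^{\gamma}$ under the grid shifts $\theta_{k\gamma}$, combined with the density of the grids as $\gamma\to0$ and the continuity of $t\mapsto\theta_{t}$ on $\bar{\cal C}^{\theta}$, upgrades in the limit to invariance of $Q$ under $\theta_{t}$ for every $t\ge0$, while adaptedness is again preserved. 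This gives the adapted stationary solution of~\eqref{fractionalSDE0} claimed in point~2, and recovers Theorem~\ref{principal1}(ii) under $\mathbf{(S^0)}$.
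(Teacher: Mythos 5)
Your architecture is the same as the paper's (a uniform-in-$\gamma$ Lyapunov control, tightness via H\"older compactness, lifting the empirical measures to the pair $(\bar X^\gamma,B^H)$, and a Young discretization error to pass $\gamma\to0$), but two steps contain genuine gaps. The clearest one is your adaptedness argument, in both points 1 and 2: you assert that conditional independence is \emph{preserved under weak convergence of the joint law}, and this principle is false, because conditioning is not a weakly continuous operation. For instance, with $X,W$ independent standard Gaussians, the triples $(X,\,X+W,\,X/n)$ have their first two coordinates conditionally independent given the third (conditionally on $X/n$ the first coordinate is degenerate), yet in the weak limit $(X,\,X+W,\,0)$ they are not, since conditioning on a constant is no conditioning at all and $X$, $X+W$ are correlated: the information carried by the conditioning variable can collapse in the limit. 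Ruling this collapse out is precisely the content of the paper's argument: it invokes Lemmas 4.1--4.3 of \cite{hairer2} to realize the conditional law ${\cal P}_T$ of future fBm increments given the past as a \emph{Feller} kernel on the weighted space ${\cal W}_{\theta,\delta}(\ER_-)$, so that $\psi^g={\cal P}_Tg$ is continuous; the conditional-independence identity \eqref{ekek} is then established along the empirical measures (which itself requires a martingale law-of-large-numbers argument for $\frac1n\sum_k H_{k-1}\left(J_k-\ES[J_k|{\cal F}_{\gamma(k-1)+t}]\right)$) and survives the passage to the limit, first in $n$ and then in $\gamma$. Without a substitute for this Feller-plus-martingale machinery, the word ``adapted'' in both conclusions is unproven.

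The second gap is that the whole proposal rests on the uniform-in-$\gamma$ Lyapunov and H\"older bounds, which you correctly single out as the main obstacle but then take for granted. This is not a deferrable routine step: it is the paper's central technical contribution (Proposition \ref{lemme4} and all of Section \ref{section3}), whose proof needs a discretized sewing/Young estimate (Lemma \ref{lemme2}), a control of the increments of $\bar Z^\gamma=\int_0^\cdot\sigma(\bar X^\gamma_{\un{s}})dB^H_s$ on random intervals whose length $\eta(\omega)$ is an explicit function of $\|B^H\|_{\theta,T}^{-1/\theta}$ (Lemma \ref{lemme3}), and a contraction argument on the grid $\{k\tilde\eta\}$ producing $V(\bar X_T^\gamma)\le\rho V(x)+P(\|B^H\|_{\theta,T})$ with $\rho<1$ independent of $\gamma$. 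Moreover, even granting this estimate, your route to ``$\frac1n\sum_k V(\bar X^\gamma_{\gamma(k-1)})$ is a.s.\ bounded uniformly in $\gamma$'' is incomplete: iterating the contraction bounds this quantity by Ces\`aro averages of $P_{1,\theta}(\|\delta_m B^H\|_{\theta,1})$, and since the increments of fBm are long-range dependent (stationary but far from i.i.d.), converting this into an \emph{almost sure} bound requires the ergodicity of the increment sequence --- Maruyama's theorem, cf.\ Remark \ref{maruyamaremark} --- together with Birkhoff's theorem; ``moments of the fBm increments'' alone give nothing almost surely. The remaining steps of your proposal (telescoping for invariance under $\theta_\gamma$, continuity of the discretized It\^o map for identification at fixed $\gamma$, the Young bound $\|\Psi(\alpha,\beta)-\Psi^\gamma(\alpha,\beta)\|_{\theta,T}\le\|\alpha\|_{\theta,T}\|\beta\|_{\theta,T}\gamma^{2\theta-1}T^{1-\theta}$ for point 2, and upgrading grid-shift invariance to all shifts via an equicontinuity estimate) do match the paper's proof and are sound modulo the two issues above.
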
 
%  some tightness properties  $({\cal P}^{(n,\gamma)}(\omega,d\alpha))_{n,\gamma}$
% with weak limits which are respectively discretely stationary (in $(i)$) or stationary (in $(ii)$) adapted solutions to 
%   \eqref{fractionalSDE0-disc} and 
 \end{Remarque}
\begin{Remarque}  From the very definition of weak convergence, the preceding assertions imply that the convergence  of $({\cal P}^{(n,\gamma)}(\omega,d\alpha))_{n,\gamma}$ holds for bounded continuous functionals $F:\bar{{\mathcal C}}^\theta(\ER_+,\ER^d)\rightarrow\ER$.
In fact, this convergence can be extended for arbitrary $T > 0$ to some non-bounded continuous functionals $F:  \bar{{\mathcal C}}^\theta([0,T],\ER^d)\rightarrow\ER$.
Actually, setting $G(\alpha)=\sup_{t\in[0,T]}V(\alpha_t)$, we easily deduce from inequality \eqref{eq:sup-V} of Proposition \ref{lemme4} and Proposition \ref{prop:ntendinfty} that 
$$\sup_{\gamma\le\gamma_0}\limsup_{n\rightarrow+\infty}{\cal P}^{(n,\gamma)}(\omega,G^p)<+\infty\quad a.s.$$ for every $p>0$. By a uniform integrability argument, it follows 
\begin{prop}
The convergence properties of $({\cal P}^{(n,\gamma)}(\omega,d\alpha))$ extend to continuous functionals $F:  \bar{{\mathcal C}}^\theta([0,T],\ER^d)\rightarrow\ER$ such that there exists a constant $C$ such that for every $\alpha\in\bar{{\mathcal C}}^\theta([0,T],\ER^d)$, 
$$ |F(\alpha_t,0\le t\le T)|\le C\sup_{t\in[0,T]}V^p(\alpha_t)$$
 with $T>0$ and $p>0$.
\end{prop}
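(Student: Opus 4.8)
The plan is to upgrade the weak convergence of Theorem~\ref{principal1}, which a priori only concerns bounded continuous functionals, to the announced class of (possibly unbounded) continuous functionals with polynomial growth in $G(\alpha):=\sup_{t\in[0,T]}V(\alpha_t)$, by means of a uniform integrability argument. The quantitative input is exactly the moment bound recalled just above the statement, namely
$$
\sup_{\gamma\le\gamma_0}\limsup_{n\rightarrow+\infty}{\cal P}^{(n,\gamma)}(\omega,G^p)<+\infty\quad a.s.
$$
valid for \emph{every} $p>0$. Note first that uniform convergence on $[0,T]$ together with the continuity of $V$ entails the convergence of $\sup_{t\in[0,T]}V(\alpha_t)$ along convergent sequences of $\bar{{\mathcal C}}^\theta([0,T],\ER^d)$; hence $G$, and therefore $G^{p'}$ for every $p'>0$, is continuous on $\bar{{\mathcal C}}^\theta([0,T],\ER^d)$. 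Since $V>0$ and $x\mapsto x^p$ is increasing, the growth hypothesis on $F$ reads $|F(\alpha)|\le C\,G(\alpha)^p$.

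First I would transfer the moment bound to the limiting measures. Fixing $\gamma\le\gamma_0$ and applying the portmanteau theorem to the nonnegative continuous truncations $G^{p'}\wedge K$ and then letting $K\rightarrow+\infty$ by monotone convergence yields $\nu^\gamma(G^{p'})\le\liminf_{n}{\cal P}^{(n,\gamma)}(\omega,G^{p'})<+\infty$ a.s., and the same argument along the $\gamma\rightarrow0$ convergence gives $\nu(G^{p'})<+\infty$. The core estimate is then a truncation bound: pick $p'>p$, and for $M>0$ set the bounded continuous truncation $F_M:=(F\wedge M)\vee(-M)$. On $\{|F|>M\}$ one has $C\,G^p\ge M$, so a Markov type inequality gives
$$
|F-F_M|\le |F|\,\mathbf{1}_{\{|F|>M\}}\le C\,G^p\Big(\frac{C\,G^p}{M}\Big)^{\frac{p'-p}{p}}=C'\,M^{-\frac{p'-p}{p}}\,G^{p'}.
$$
Integrating against ${\cal P}^{(n,\gamma)}(\omega,\cdot)$, taking $\limsup_n$ and using the moment bound at the exponent $p'$, I obtain
$$
\sup_{\gamma\le\gamma_0}\limsup_{n\rightarrow+\infty}{\cal P}^{(n,\gamma)}\big(\omega,|F-F_M|\big)\le C''\,M^{-\frac{p'-p}{p}}\xrn{M\rightarrow+\infty}0\quad a.s.,
$$
and the identical bound holds with ${\cal P}^{(n,\gamma)}$ replaced by $\nu^\gamma$ thanks to $\nu^\gamma(G^{p'})<+\infty$.

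I would conclude with a three term decomposition: for fixed $M$,
$$
\big|{\cal P}^{(n,\gamma)}(\omega,F)-\nu^\gamma(F)\big|\le {\cal P}^{(n,\gamma)}(\omega,|F-F_M|)+\big|{\cal P}^{(n,\gamma)}(\omega,F_M)-\nu^\gamma(F_M)\big|+\nu^\gamma(|F_M-F|).
$$
Letting $n\rightarrow+\infty$, the middle term vanishes by Theorem~\ref{principal1}\emph{(i)} since $F_M$ is bounded continuous, while the two remaining terms are $\le C''M^{-(p'-p)/p}$ uniformly in $n$; sending $M\rightarrow+\infty$ yields ${\cal P}^{(n,\gamma)}(\omega,F)\to\nu^\gamma(F)$ a.s. The very same scheme, now letting $\gamma\rightarrow0$ and invoking Theorem~\ref{principal1}\emph{(ii)} for the bounded part, together with the uniform-in-$\gamma$ tail control above, gives $\nu^\gamma(F)\to\nu(F)$.

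The only genuinely delicate point is the uniform integrability, that is, controlling the tail contribution ${\cal P}^{(n,\gamma)}(\omega,|F-F_M|)$ uniformly in $n$ (and in $\gamma$ for the second convergence); the rest is the routine truncation/portmanteau machinery. This uniformity rests entirely on having the moment bound at a \emph{strictly larger} exponent $p'>p$, supplied by inequality~\eqref{eq:sup-V} of Proposition~\ref{lemme4} and Proposition~\ref{prop:ntendinfty}, and this is precisely where the extra degree of integrability is consumed.
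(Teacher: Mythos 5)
Your proof is correct and follows exactly the route the paper takes: the paper's argument consists precisely of the moment bound $\sup_{\gamma\le\gamma_0}\limsup_{n}{\cal P}^{(n,\gamma)}(\omega,G^{p'})<+\infty$ a.s.\ (deduced from inequality \eqref{eq:sup-V} of Proposition \ref{lemme4} and Proposition \ref{prop:ntendinfty}) followed by the phrase \emph{``by a uniform integrability argument, it follows''}, and your truncation $F_M$, the Markov-type tail estimate at a strictly larger exponent $p'>p$, and the three-term decomposition are exactly the standard way to implement that argument. You have simply written out in full the details the paper leaves implicit.
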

\end{Remarque}

\begin{Remarque} A third natural extension of Theorem \ref{principal1} consists in handling the case of an irregular fractional Brownian motion $B$ with Hurst index $1/4<H<1/2$. This extension is presumably within the reach of our technology on differential systems driven by fBm, but requires a huge amount of technical elaboration. Indeed, to start with, equation \eqref{fractionalSDE0} has to be defined thanks to rough paths techniques whenever $H<1/2$, and we refer to \cite{FV-bk} for a complete account on rough differential equations driven by Gaussian processes in general and fractional Brownian motion in particular. More importantly, as it will be observed in the next sections, our main result heavily relies on some thorough estimates performed on the discretized version \eqref{fractionalSDE0-disc} of equation \eqref{fractionalSDE0}. When $H>1/2$ this discretization procedure is based on an Euler type scheme, but the case $H<1/2$ involves the introduction of some L\'evy area correction terms of Milstein type (see \cite{Da07}) or products of increments of $B^H$ if one desires to deal with an implementable numerical scheme (cf. \cite{DNT}). This new setting has tremendous effects on the proof of Propositions \ref{lemme4} and \ref{prop:ntendinfty}. For sake of conciseness, we have thus decided to stick to the case $H>1/2$, and defer the rough case to a subsequent publication.

\end{Remarque}

%}
The sequel of the paper is built as follows. The three next sections are devoted to the proof of Theorem \ref{principal1}.
In Section  \ref{section3}, we prove some preliminary results for the long-time stability of $({\cal P}^{(n,\gamma)}(\omega,d\alpha))_{n},$ when $ \gamma > 0.$ It is important to note that the controls established in this section are independent of $\gamma$ in order to obtain in the sequel a long-time control that does not explode when $\gamma\rightarrow0$. Then, in Section \ref{section4}, we obtain some tightness properties for $({\cal P}^{(n,\gamma)}(\omega,d\alpha))$ (in $n$ and $\gamma$) and, in Section \ref{section5}, we prove that the weak limits of this sequence are adapted stationary solutions.
Eventually, in Section \ref{section6}, we test numerically our algorithm for the approximation of the invariant distribution of a 
particular fractional SDE.\\
Note that in the proofs below, non-explicit constants are usually denoted by $C$ or $C_T$ (if a dependence to 
$T$ needs to be emphasized) and may change from line to line.
\section{Evolution control of $(\bar{X}_t^\gamma)$ in a finite horizon}\label{section3}
The main aim of this part is  to obtain a  finite-time control of $V(\bar{X}_T^\gamma)$ in terms of $V(\bar{X}_0^\gamma)$ which is independent of $\gamma$. 
This is the purpose of the first part of Proposition \ref{lemme4} below. 
In order to obtain some functional convergence results, we state in the second part a result  about the finite-time control of the Hölder semi-norm of $\bar{X}^\gamma$.

\begin{prop}\label{lemme4} Let $T>0$. Assume $\mathbf{(C)}$. Then, \\
(i) For every $ p \ge 1,$ there exist $\gamma_0>0$, $\rho\in(0,1)$ and a polynomial function $P_{p,\theta}:\ER\rightarrow\ER$ such that for every $\gamma\in(0,\gm_0]$,
\begin{align}\label{VX}
V^p(\bx_T^\gamma)\le \rho V^p(x)+P_{p,\theta}(\|B^H\|_{\theta,T}).
\end{align}
%where  $Q_\gamma$ is defined for every $(w(t))_{t\in[0,T]}$ by
%\begin{equation}\label{eq:qgamma}
%Q_\gamma( (w(t))_{t\in[0,T]})=\sum_{k=1}^{\lfloor \frac{T}{\gamma}\rfloor}|w(k\gamma)-w((k-1)\gamma)|^2.
%\end{equation}
Furthermore, 
\begin{align}
\label{eq:sup-V}
\sup_{t\in[0,T]} V^p(\bx_t^\gm)\le C\left( V^p(x)+P_{p,\theta}(\|B^H\|_{\theta,T})\right).
\end{align}
(ii) For every $\theta\in(\frac{1}{2},H)$, $T>0$, and $\gamma\in(0,\gamma_0]$
\begin{equation}\label{eq:holder-bnd-with-V}
\sup_{0\le s<t\le T}\frac{|\bx^\gm_t -\bx^\gm_s|}{(t-s)^\theta}\le C_T\left(V(x)+\tilde{P}_\theta(\|B^H\|_{\theta,T})\right),
\end{equation}
where $\tilde{P}$ is another real valued polynomial function.
\end{prop}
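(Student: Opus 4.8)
\section*{Proof strategy}

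The plan is to study the evolution of the Lyapunov functional along the continuous Euler scheme through the Young change of variables formula, and to read off from it all three estimates. Starting from the integral form $\bx_t^\gm = x + \int_0^t b(\bx_{\un{s}}^\gm)\,ds + \int_0^t \sigma(\bx_{\un{s}}^\gm)\,dB_s^H$ and applying the chain rule to $V^p$ (licit since $V\in{\cal C}^2$ and $\bx^\gm$ is $\theta$-H\"older with $\theta>1/2$), one obtains
$$V^p(\bx_t^\gm)=V^p(x)+\int_0^t \psg \nabla V^p(\bx_u^\gm),\,b(\bx_{\un{u}}^\gm)\psd\,du+I_t,\qquad I_t:=\int_0^t \psg \nabla V^p(\bx_u^\gm),\,\sigma(\bx_{\un{u}}^\gm)\,dB_u^H\psd.$$
For the drift integrand I would freeze the argument at the grid point, invoke $\psg\nabla V,b\psd\le\beta-\alpha V$ and $|b|\le\sqrt{V}$ from $\mathbf{(C)}$, and estimate the mismatch $\nabla V^p(\bx_u^\gm)-\nabla V^p(\bx_{\un{u}}^\gm)$ using that $D^2V$ is bounded together with $|\bx_u^\gm-\bx_{\un{u}}^\gm|\le\gm\sqrt{V(\bx_{\un{u}}^\gm)}+\|\sigma\|_\infty\|B^H\|_{\theta,T}\gm^\theta$. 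After a Young inequality this produces, for $\gm$ small enough, a pointwise dissipative bound $\psg\nabla V^p(\bx_u^\gm),b(\bx_{\un{u}}^\gm)\psd\le -c\,V^p(\bx_u^\gm)+C(1+\|B^H\|_{\theta,T}^{2p})$ with $c>0$; the factor $\gm$ carried by the mismatch is exactly what keeps $c$ bounded away from $0$ uniformly in $\gm$.

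Before touching the noise term $I_t$ I would establish the a priori controls \eqref{eq:sup-V} and \eqref{eq:holder-bnd-with-V} by a Young--Gronwall argument, the delicate point being uniformity in $\gm$. On an interval $[s,t]$ the discretised integral $\int_s^t\sigma(\bx_{\un{u}}^\gm)\,dB_u^H$ is a genuine finite sum; a term-by-term bound would create a factor $((t-s)/\gm)^{1-\theta}=\gm^{\theta-1}$ that explodes as $\gm\to0$. Instead I would isolate the telescoping leading term $\sigma(\bx_{\un{s}}^\gm)(B_t^H-B_s^H)$, bounded by $\|\sigma\|_\infty\|B^H\|_{\theta,T}(t-s)^\theta$, and control the remainder by a discrete Young (sewing-type) estimate of order $(t-s)^{2\theta}$ with constant $C\|\sigma\|_{\mathrm{Lip}}\|\bx^\gm\|_{\theta}$. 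Together with the sublinearity of $b$ this yields on short blocks a self-referential inequality for $\|\bx^\gm\|_{\theta,[s,t]}$ whose self-coefficient is $\le1/2$ once $\|B^H\|_{\theta,T}(t-s)^\theta$ is small; patching $O(\|B^H\|_{\theta,T}^{1/\theta})$ such blocks gives \eqref{eq:holder-bnd-with-V}, and since $V$ is essentially quadratic the attached control of $\sup_{[0,T]}|\bx^\gm|$ transfers to $\sup_{[0,T]}V^p(\bx^\gm)$, namely \eqref{eq:sup-V}.

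For the dissipative bound \eqref{VX} itself I would feed the pointwise drift estimate into an exponential-weighting (Gronwall) computation, obtaining $V^p(\bx_T^\gm)\le e^{-cT}V^p(x)+K/c+2\sup_{t\le T}|I_t|$ with $K=C(1+\|B^H\|_{\theta,T}^{2p})$, so that all that remains is to bound $\sup_{t\le T}|I_t|$. A single global Young--Lo\`eve estimate is fatal here, since its remainder carries $\|\nabla V^p(\bx^\gm)\|_\infty\,\|\sigma(\bx^\gm)\|_{\theta}\sim(\sup_{[0,T]}V)^p\sim V^p(x)$ times a power of $\|B^H\|_{\theta,T}$, which would overwhelm the contraction. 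I would therefore estimate $I$ block by block on the adapted partition, where each block carries a fixed H\"older budget $\|B^H\|_{\theta}(\text{length})^\theta=\varepsilon_0$: the leading increments sum to something sublinear in $V^p(x)$ (a power $(2p-1)/(2p)<1$, absorbed via \eqref{eq:sup-V} and a Young inequality), while the sewing remainders contribute to the coefficient of $V^p(x)$ only a factor proportional to $\varepsilon_0 T$. Choosing $\varepsilon_0$ small enough that this factor stays below $1-e^{-cT}$, one gets $\sup_{t\le T}|I_t|\le\varepsilon V^p(x)+(\text{polynomial in }\|B^H\|_{\theta,T})$, whence \eqref{VX} with $\rho=e^{-cT}+\varepsilon<1$. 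The detour through \eqref{eq:sup-V} is what makes the leading noise term sublinear in $V^p(x)$, hence additively absorbable instead of multiplicatively compounding over the $O(\|B^H\|_{\theta,T}^{1/\theta})$ blocks.

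The main obstacle is the conjunction of two requirements --- uniformity in the mesh $\gm$ and keeping the noise subordinate to the Lyapunov functional --- each of which defeats the naive estimate: the term-by-term bound of the discretised integral blows up like $\gm^{\theta-1}$, while the global Young--Lo\`eve bound of $I$ scales like $V^p(x)$ with an $\|B^H\|_{\theta,T}$-dependent coefficient. Both are overcome by the same device, namely treating the discrete sums as genuine Young integrals (telescoped leading term plus a sewing-type remainder) and localising on a partition adapted to the H\"older regularity of $B^H$, with the per-block budget $\varepsilon_0$ tuned to the contraction gap $1-e^{-cT}$. Making these discretised-integral estimates quantitative and uniform in $\gm$ is the technical core; the surrounding Young and Gronwall manipulations are then routine.
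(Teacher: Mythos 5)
Your overall architecture is genuinely close to the paper's: a Lyapunov expansion (the paper uses a discrete Taylor expansion at the grid points rather than the Young chain rule, which is a cosmetic difference), a priori H\"older and sup controls obtained from a self-referential inequality on blocks where the noise budget is small (this is exactly the mechanism of the paper's Lemmas \ref{lemme2} and \ref{lemme3}, with block length $\eta(\omega)\sim \|B^H\|_{\theta,T}^{-1/\theta}$), and then absorption of the discretized stochastic integral into the dissipative term. One structural difference: the paper proves \eqref{VX} for $p=1$ only and deduces $p>1$ from the elementary inequality $|u+v|^p\le(1+\varepsilon)|u|^p+c_{\varepsilon,p}|v|^p$, which keeps all integrands globally Lipschitz of linear growth; your direct-$V^p$ route forces you to drag $V^{p-1/2}$ weights through every sewing estimate, which is workable but is precisely what makes your bookkeeping fragile.

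The genuine gap is the central quantitative claim of your last step, namely that with blocks calibrated by $\|B^H\|_{\theta,T}\,\eta^{\theta}=\varepsilon_0$ the sewing remainders contribute to the coefficient of $V^p(x)$ ``only a factor proportional to $\varepsilon_0 T$''. They do not. On a block of length $\eta$ the sewing remainder of $I$ is of order $\|B^H\|_{\theta,T}\,\eta^{2\theta}\,\bigl(\sup V^{p-1/2}\bigr)\,\|\bx^\gm\|_{\theta,\mathrm{block}}$, and $\|\bx^\gm\|_{\theta,\mathrm{block}}$ unavoidably contains, besides $\|\sigma\|_\infty\|B^H\|_{\theta,T}$ and the drift part $\sqrt{V}\,\eta^{1-\theta}$, a noise-sewing term of order $\sqrt{V}\,\|B^H\|_{\theta,T}\,\eta^{\theta}$ (this is the term $C_T(1+|\bx^\gm_{\un{s}}|)\eta^\theta\|B^H\|_{\theta,T}$ in Lemma \ref{lemme3}). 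The corresponding per-block contribution is then of order $\|B^H\|_{\theta,T}^2\eta^{3\theta}\sup V^p=\varepsilon_0^2\,\eta^{\theta}\sup V^p$, and summing over the $T/\eta$ blocks gives $\varepsilon_0^2\,\eta^{\theta-1}T\sup V^p=\varepsilon_0^{(3\theta-1)/\theta}\,\|B^H\|_{\theta,T}^{(1-\theta)/\theta}\,T\sup V^p$. The coefficient carries the unbounded random factor $\|B^H\|_{\theta,T}^{(1-\theta)/\theta}$, so no deterministic choice of $\varepsilon_0$ keeps it below the contraction gap $1-e^{-cT}$ almost surely: the absorption fails exactly at the point you identified as the crux. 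The cure is the paper's two-scale calibration: the contraction step must be run on a strictly finer partition, of mesh $\tilde{\eta}_\varepsilon\sim\bigl[\varepsilon\,(C_T(1+\|B^H\|_{\theta,T}))^{-1}\bigr]^{1/(2\theta-1)}$ (exponent $1/(2\theta-1)$, not $1/\theta$; see the definition of $\tilde{\eta}_\varepsilon$ following \eqref{eq:entretildeeta}), chosen so that the entire per-block $V$-coefficient is $\varepsilon\times(\text{block length})$; the coarser scale $\|B^H\|_{\theta,T}\eta^\theta\sim 1$ is adequate only for the boundedness estimates \eqref{eq:sup-V} and \eqref{eq:holder-bnd-with-V}, not for the contraction \eqref{VX}. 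With that corrected mesh, either your global-sup absorption or the paper's per-block contraction via $e^{-\alpha x/2}(1+\varepsilon_0 x)\le 1-\delta x$ closes the argument.
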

The proof of this result is achieved in Subsection \ref{ss:22}. Before, we focus in Subsection \ref{ss:21} on the control 
of increments of some  discretized equations with non-bounded coefficients driven by $B^H.$ 
\subsection{Technical Lemmas}\label{ss:21}
Let us recall that, for every $t\ge0$, $\un{t}_{\gamma}=\gamma \max\{k\in\EN,\gamma k\le t\}.$
In the sequel, we will usually write $\un{t}$ instead of $\un{t}_\gamma$.

In the following lemmas, we will use the following notation:  for any element $(x(t))_{t\ge0}$ of ${\cal C}(\ER_+,\ER^d)$ and $T>0,$ $\theta>0$, $\gamma>0$, we define 
$$\|x\|_{\theta,\gamma}^{s,t}=\sup_{s\le u\le v\le t}\frac{|x(\un{v}_\gm)-x( \un{u}_\gm)|}{(\un{v}_\gamma-\un{u}_\gm)^\theta},$$
where we set by convention $\frac00 =0.$ 
%\textcolor{red}{Verifier que cela ne pose pas de problemes}

% $\|.\|_{\theta,\gamma}^{s,t}$ obviously denotes a Hölder-type semi-norm adapted to the discretized problem.   
% \textcolor{red}{Peux-tu verifier que je ne me suis pas trompe 
%en rajoutant des barres en dessous ?}
\begin{lemme}\label{gronwall}
Assume that $b$ is a sublinear function, $i.e.$ that there exists $C>0$ such that for every $x\in\ER^d$, $|b(x)|\le C(1+|x|)$. Then, for every $T>0$, there exists a constant $C>0$ such that for every $s,t\in[0,T]$ with $s\le t$, for every $\gamma>0$, for every $\theta\in(0,H)$
$$|\bx_{{\un{t}}}^{\gamma}|\le \left(|\bx^\gamma_{\un{s}}|+C({\un{t}}-{\un{s}})+\|\bar{Z}^\gamma\|_{\theta,\gamma}^{{s},{t}}(\un{t}-\un{s})^\theta \right)\exp(C(\un{t}-\un{s})))$$ 
where
$$\bar{Z}^\gamma_t=\int_0^t\sigma(\bx^{\gm}_{{\un{s}}})dB^H_s.$$
\end{lemme}
\begin{proof}
First, from the very definition of  $(\bx^{\gm}_t)_{t\ge0}$, we have for every $s,t\in[0,T]$ with $s\le t$:
\begin{equation}\label{eq:ceuler}
\bx^{\gm}_{\un{t}}=\bx^{\gm}_{\un{s}}+\int_{\un{s}}^{\un{t}} b(\bx^{\gm}_{\un{u}})du+\bar{Z}^{\gm}_{\un{t}}-\bar{Z}^{\gm}_{\un{s}}.
\end{equation}
The function $b$ being sublinear, we deduce that
$$|\bx^{\gm}_{\un{t}}|=|\bx^{\gm}_{\un{s}}|+\|\bar{Z}^\gamma\|_{\theta,\gamma}^{{s},{t}}(\un{t}-\un{s})^\theta+C \int_{\un{s}}^{\un{t}}(1+|\bx^{\gm}_{\un{u}}|)du.$$
Setting $g_s(v)=|\bx_{\un{s}+v}|$, it follows that for every $v\in[0,\un{t}-\un{s}]$,
$$g_{{s}}(v)\le a+C \int_0^v g_{{s}}(u)du$$
with 
$a=|\bx^{\gm}_{\un{s}}|+\|\bar{Z}^\gamma\|_{\theta,\gamma}^{{s},{t}}(\un{t}-\un{s})^\theta +C(\un{t}-\un{s}).$
The result follows from Gronwall's lemma.
\end{proof}

The control of $B^H$-integrals is usually based on the so-called sewing Lemma (see $e.g.$ \cite{Coutin12,Feyel06}) which leads to a comparison of 
$\int_s^t f(x_u) dB_u^H$ with $f(x_t)(B^H_t-B^H_s)$. The following lemma can be viewed as a discretized version of such results: 
\begin{lemme}\label{lemme2}  Assume that $b$ is a sublinear function. Let $\gamma_0>0$ and  $(f_\gamma)_{\gamma\in(0,\gamma_0]}$ be a family of functions from 
$\ER_+\times\ER^{d}$ to $\mathbb{M}_{d,q}$ such that there exists $r\ge 0$ such that for every $T>0$, there exists $C_T>0$ such that $\forall\gamma\in(0,\gm_0],$
\begin{equation}\label{assump:f}
\;\forall (s,x),(t,y)\in[0,T]\times\ER^{d},\quad \|f_\gamma(t,y)-f_\gamma(s,x)\|\le C_T(1+|x|^r+|y|^r)(|t-s|+|y-x|).
\end{equation}
Let $(\bar{H}_t^{\gm})_{t\ge0}$ be defined by
$$\forall t\ge0,\quad \bar{H}_t^{\gm}=\int_0^t f_\gamma(\un{s}, \bx_{\un{s}}^\gm)d B^H_s.$$
Then, for every $\theta\in(\frac{1}{2},H)$, for every $T>0$, there exists $\tilde{C}_T>0$ such that for every $\gm\in(0,\gm_0]$, for every  $0\le s\le t\le T$,
\begin{equation}
  \label{eq:young}
|\bar{H}_{\un{t}}^{\gm}-\bar{H}_{\un{s}}^{\gm}-f_\gamma({\un{s}},\bx^\gamma_{{\un{s}}})(B^H_{\un{t}}-B^H_{\un{s}})|\le\tilde{C}_T(\un{t}-\un{s})^{2\theta}((1+|\bx_{\un{s}}^\gm|^{r+1}+(\|\bar{Z}^\gm\|_{\theta,\gamma}^{\un{s},\un{t}-\gamma})^{r+1})\|B^H\|_{\theta,T}.  
\end{equation}
\end{lemme}
\begin{proof}
Denoting by $\tilde{f}_{\gamma,\omega}$ the (random) function on $\ER_+$ $a.s.$ defined by  $\tilde{f}_{\gamma,\omega}(s)=f_\gamma(\un{s},\bar{X}_{\un{s}}^\gamma(\omega))$,  we can write: 
$$\bar{H}_{\un{t}}^{\gm}-\bar{H}_{\un{s}}^{\gm}-f_\gamma({\un{s}},\bx^\gamma_{{\un{s}}})(B^H_{\un{t}}-B^H_{\un{s}})=
\int_{\un{s}}^{\un{t}}  \tilde{f}_{\gamma,\omega}(u)-\tilde{f}_{\gamma,\omega}(s) d B_u^H.$$
Let $\theta\in(1/2,H)$ (so that $2\theta>1$). We use a classical Young estimate (see $e.g.$ \cite{Young36}, Inequality (10.9)), to get a upper bound for the left hand side of~\eqref{eq:young}. Let us recall the definition 
of $p$-variations.   For every $u, v\ge 0$ such that $u\le v$, for every $p>0$ and for every function $f:\ER_+\rightarrow\ER^d$, $$V_p(f,u,v)=\sup(\sum_{i=1}^n |f(t_i)-f(t_{i-1})|^p)^\frac{1}{p},$$
the supremum being taken over all subdivisions $(t_i)$ of $[u,v]$: $u=t_0< t_1< \ldots< t_n=v$.
Then using Young inequality we get 
\begin{equation}\label{youngestimate}
|\bar{H}_{\un{t}}^{\gm}-\bar{H}_{\un{s}}^{\gm}-f_\gamma({\un{s}},\bx^\gamma_{{\un{s}}})(B^H_{\un{t}}-B^H_{\un{s}})|
\le C V_{\frac{1}{\theta}} (\tilde{f}_{\gamma,\omega},\un{s},\un{t}-\gamma)V_{\frac{1}{\theta}} (B^H,\un{s},\un{t})
\end{equation}
where $C$ depends only on $\theta.$ 
Note that we could write $V_{\frac{1}{\theta}} (\tilde{f}_{\gamma,\omega},\un{s},\un{t}-\gamma)$ instead of $V_{\frac{1}{\theta}} (\tilde{f}_{\gamma,\omega},\un{s},\un{t})$ since $\tilde{f}_{\gamma,\omega}$ is constant on $[\un{t}-\gamma,\un{t})$.
We now control separately the two terms on the right-hand member.\\
Let $T>0$. Since for every $u,v\in[0,T]$,
$$|B^H_{v}-B^H_{u}|\le \|B^H\|_{\theta,T}|v-u|^\theta,$$
we first obtain that
\begin{equation}\label{pvariationB}
V_{\frac{1}{\theta}} (B^H,\un{s},\un{t})\le \|B^H\|_{\theta,T} (\un{t}-\un{s})^\theta.
\end{equation}
Second,  let $s,t\in[0,T]$ such that
$s\le t$ and consider a subdivision $(t_i)_{i=1}^n$ of $[\un{s},\un{t}-\gamma]$. By \eqref{assump:f}, we have
\begin{align*}
|\tilde{f}_{\gamma,\omega}(t_{i+1})-\tilde{f}_{\gamma,\omega}(t_i)|\le
C_T(1+|\bar{X}_{\un{t_i}}^\gamma|^r+|\bar{X}_{\un{t_{i+1}}}^\gamma|^r)(|\un{t_{i+1}}-\un{t_i}|+|\bar{X}_{\un{t_{i+1}}}^\gamma-\bar{X}_{\un{t_i}}^\gamma|).
\end{align*}
On the one hand, it follows from Lemma \ref{gronwall} that
$$1+|\bar{X}_{\un{t_i}}^\gamma|^r+|\bar{X}_{\un{t_{i+1}}}^\gamma|^r\le C_T\left(1+{|\bx_{\un{s}}^\gamma|}^r+(\|\bar{Z}^{\gamma}\|^{\un{s},\un{t}-\gamma}_{\theta,\gamma})^r\right).$$
On the other hand, since $b$ is a sublinear function, we have
$$|\bar{X}_{\un{t_{i+1}}}^\gamma-\bar{X}_{\un{t_i}}^\gamma|\le C_T(1+\int_{{t_i}}^{{t_{i+1}}}|\bx_{\un{u}}^\gamma|du+ |\bar{Z}^{\gamma}_{\un{t_{i+1}}}-\bar{Z}^{\gamma}_{\un{t_{i}}}|).$$
Then, using again Lemma \ref{gronwall} and the definition of $\|.\|^{\un{s},\un{t}-\gamma}_{\theta,\gamma}$, it follows that
$$ |\bar{X}_{\un{t_{i+1}}}^\gamma-\bar{X}_{\un{t_i}}^\gamma| \le C_T\left(1+|\bx^\gamma_{\un{s}}|+\|\bar{Z}^\gamma\|_{\theta,\gamma}^{{\un{s}},\un{t}-\gamma}\right)(\un{t_{i+1}}-\un{t_i})
+\|\bar{Z}^\gamma\|_{\theta,\gamma}^{{\un{s}},\un{t}-\gamma}(\un{t_{i+1}}-\un{t_i})^\theta.
$$
By a combination of the previous inequalities (and by the use of the Young inequality), we obtain 
\begin{align*}
|\tilde{f}_{\gamma,\omega}(t_{i+1})-\tilde{f}_{\gamma,\omega}(t_i)|\le
C_T(1+|\bar{X}_{\un{s}}^\gamma|^{r+1}+(\|\bar{Z}^{\gamma}\|^{\un{s},\un{t}-\gamma}_{\theta,\gamma})^{r+1})|\un{t_{i+1}}-\un{t_i}|^\theta.
\end{align*}
Since $ \sum_i (\un{t_{i+1}}-\un{t_i}) \le  \un{t}- \un{s},$ we deduce that 
$$V_{\frac{1}{\theta}} (\tilde{f}_{\gamma,\omega},\un{s},\un{t}-\gamma)
\le C_T(1+|\bar{X}_{\un{s}}^\gamma|^{r+1}+(\|\bar{Z}^{\gamma}\|^{\un{s},\un{t}-\gamma}_{\theta,\gamma})^{r+1})(\un{t}-\un{s})^\theta.$$
Finally, we plug  this control and \eqref{pvariationB} into \eqref{youngestimate} and the result follows.
\end{proof}
%\end{proof}
In the following lemma, we make use of Lemma \ref{lemme2} when $f_\gamma(t,x)=\sigma(x)$. In this particular case, we show below that 
we can deduce a control of the increments of $\bar{Z}^\gamma$ on an interval with random but explicit length $\eta(\omega)$ (which does not depend on $\gamma$). 
\begin{lemme}\label{lemme3} Let $\gamma_0$ be a positive number. Assume that $b$ is a sublinear function and that $\sigma$ is a bounded Lipschitz continuous function. Then,
for every $\theta\in(\frac{1}{2},H)$, for every $T>0$, there exists $C_T>0$, there exists a positive random variable
\begin{equation}\label{eq:valeureta}
\eta(\omega):=\left(\frac{1}{2}[(C_T \|B^H(\omega)\|_{\theta,T})^{-1}\wedge 1]\right)^\frac{1}{\theta}
\end{equation}
 such that $a.s$ for every $0\le s\le t\le T$ with $\un{t}-\un{s}\le\eta$, for every $\gamma\in(0,\gamma_0)$ 
$$ {|\bar{Z}^\gamma_{\un{t}}-\bar{Z}^\gamma_{\un{s}}|}\le  {(\un{t}-\un{s})^\theta}\left({2}\|\sigma\|_\infty+ C_T (1+|\bx^\gamma_{\un{s}}|)\eta^\theta\right)\|B^H\|_{\theta,T}$$
where $\|\sigma\|_\infty=\sup_{x\in\ER^d} \|\sigma(x)\|$. 

\end{lemme}
\begin{proof}
For every $l\ge 0$, set $t_l=\un{s}+\gamma l$ and $N_l=\|\bar{Z}^\gamma\|_{\theta,\gamma}^{\un{s},t_l}$. Owing to the definition of $\|.\|_{\theta,\gamma}^{\un{s},t_l}$, 
we have 
$$N_{l+1}\le N_l\vee \sup_{i\le l}\frac{\left|\bar{Z}^\gamma_{t_{l+1}}-\bar{Z}^\gamma_{t_i}\right|}{(t_{l+1}-t_i)^{\theta}}.$$
By Lemma \ref{lemme2} applied with $s=t_i$, $t=t_{l+1}$ and $f_\gamma(s,x)=\sigma(x)$ (and $r=0$),
$$\frac{\left|\bar{Z}^\gamma_{t_{l+1}}-\bar{Z}^\gamma_{t_i}\right|}{(t_{l+1}-t_i)^{\theta}}
\le \left(\|\sigma\|_\infty+ C_T(t_{l+1}-t_i)^{\theta}\left(1+|\bx^\gamma_{t_i}|+\|\bar{Z}^\gamma\|_{\theta,\gamma}^{\un{s},t_l}\right)\right)
\|B^H\|_{\theta,T}.$$
By Lemma \ref{gronwall} and the fact that $t\rightarrow\|\bar{Z}^\gamma\|_{\theta,\gamma}^{\un{s},t}$ is nondecreasing, it follows that
$$\sup_{i\le l}\frac{\left|\bar{Z}^\gamma_{t_{l+1}}-\bar{Z}^\gamma_{t_i}\right|}{(t_{l+1}-t_i)^{\theta}}
\le \left(\|\sigma\|_\infty+ C_T\left((1+|\bx^\gamma_{\un{s}}|)(t_{l+1}-\un{s})+\|\bar{Z}^\gamma\|_{\theta,\gamma}^{\un{s},t_l}(t_{l+1}-\un{s})^{\theta}\right)\right)
\|B^H\|_{\theta,T}.$$
Let $\rho$ be a positive number. If $t_{l+1}-\un{s}\le \rho$, we obtain that
$$N_{l+1}\le N_l\vee (\alpha_\rho+ \beta_\rho  N_l)$$
with 
\begin{align*}
&\alpha_\rho=\left(\|\sigma\|_\infty+ C_T((1+|\bx^\gamma_{\un{s}}|)\rho^\theta\right)\|B^H\|_{\theta,T}\quad\textnormal{and}\quad
&\beta_\rho=C_T \rho^\theta\|B^H\|_{\theta,T}.
\end{align*}
Let us now set $\rho=\eta(\omega)$ where $\eta(\omega)$ is defined by \eqref{eq:valeureta}. For this choice of $\rho$, we have $\beta_\eta\le\frac{1}{2}$. Then, the interval $[0,\alpha_\eta/(1-\beta_\eta)]$ being stable by the function $x\mapsto \alpha_\eta+\beta_\eta x$, we deduce that for every $l\in\EN$ such that $t_{l+1}-\un{s}\le \eta(\omega)$,
{$$N_l\le \frac{\alpha_\eta}{1-\beta_\eta}\le {2\alpha_\eta}.$$
Note that we used that $N_0$ belongs to $[0,\alpha_\eta/(1-\beta_\eta)]$ (since $N_0=0$).} The result follows.
\end{proof}

\subsection{Proof of Proposition \ref{lemme4}} \label{ss:22}

Proposition \ref{lemme4} is the main technical issue of our approximation result, and its proof is detailed here for sake of completeness. We shall first focus on establishing relation~\eqref{VX}  for $ p = 1$.  {The main 
difficulty is to prove that the noise component can be controlled in such a way that under the mean-reverting assumption,
we obtain a coefficient $\rho$ which is strictly lower than 1. (See in particular~\eqref{eq:entretildeeta}.) Note that this property  on $\rho$ will be crucial for the control of the sequence $(V(\bar{X}_{kT}))_{k\ge0}$.}

\smallskip
\noindent Then, we generalize this result to any $p>1$. Finally we handle the H\"older type bound of Proposition \ref{lemme4} item (ii). 
We now divide our proof in several steps.

\smallskip

\noindent
{\textit{Step 1: First upper-bound for $V(\bar{X}_{\underline{t}}^\gm)$ under the mean-reverting assumption.}}
Set $\Delta_n=B^H_{\gamma n} -B^H_{\gamma (n-1)}$. Owing to the Taylor formula,
\begin{align*}
V(\bx_{(n+1)\gm})&=V(\bx_{n\gm})+\gamma \psg\nabla V(\bx_{n\gm}),b(\bx_{n\gm})\psd+\psg \nabla V(\bx_{n\gm}),\sigma(\bx_{n\gm})\Delta_{n+1}\psd\\
&+\frac{1}{2} \sum_{i,j}\partial^2_{i,j} V(\xi_{n+1})(\bx_{(n+1)\gm}-\bx_{n\gm})_i(\bx_{(n+1)\gm}-\bx_{n\gm})_j.
\end{align*}
where $\xi_{n+1}\in[\bx_{n\gm},\bx_{(n+1)\gm}]$. Using assumption $\mathbf{(C)}${, equation \eqref{eq:euler-scheme-x} for $\bx_{(n+1)\gm}-\bx_{n\gm}$} and the boundedness of $D^2V$ and $\sigma$, we obtain
\begin{align}\label{entretildeeta2}
V(\bx_{(n+1)\gm})\le V(\bx_{n\gm})+\gamma(\beta-\alpha V(\bx_{n\gm}))+A_1(n+1)
+C(\gamma^2 V(\bx_{n\gm})+|\Delta_{n+1}|^2),
\end{align}
where 
$$A_1(n+1)=\psg \nabla V(\bx_{n\gm}),\sigma(\bx_{n\gm})\Delta_{n+1}\psd.$$
Set $\displaystyle\gamma_0=\frac{\alpha}{2C}$. For every $\gamma\in(0,\gamma_0]$, for every $n\ge0$, we have
\begin{align*}
V(\bx_{(n+1)\gm})\le V(\bx_{n\gm})(1-\frac{\alpha}{2}\gm)+A_1(n+1)
+(\beta\gamma+C|\Delta_{n+1}|^2).
\end{align*}
Then, iterating the previous inequality yields for every $s,t$ such that $s\le t$,
$$V(\bx_{{\un{t}}})\le V(\bx_{{\un{s}}})(1-\frac{\alpha}{2}\gm)^{\frac{{\un{t}}-{\un{s}}}{\gamma}}+\sum_{k=\frac{\un{s}}{\gamma}+1}^{{\frac{\un{t}}{\gm}}}
(1-\frac{\alpha}{2}\gm)^{\frac{{\un{t}}-{\un{s}}}{\gamma}-k}\left(A_1(k)+\beta\gamma+C|\Delta_k|^2\right).
$$
Using that $\log(1+x)\le x$ for every $x>-1$, we deduce that
\begin{equation}\label{eq:l41}
 V(\bx_{{\un{t}}})\le e^{-\frac{\alpha({\un{t}}-{\un{s}})}{2}}(V(\bx_{{\un{s}}})+|\bar{H}_{{\un{t}}}^\gm-\bar{H}_{{\un{s}}}^{\gamma}|)+
 \sum_{k=\frac{\un{s}}{\gamma}+1}^{{\frac{\un{t}}{\gm}}}(\beta\gamma+C|\Delta_{k}|^2),
 \end{equation}
where
\begin{align*}
\bar{H}_{t}^{\gamma}=\int_0^t g_\gm({\un{s}})\psg \nabla V(\bx_{{\un{s}}}),\sigma(\bx_{{\un{s}}})dB_s^H\psd=\sum_{i,j}\int_0^t g_\gm({\un{s}}) (\nabla V)_i(\bx_{{\un{s}}}),\sigma_{i,j}(\bx_{{\un{s}}})d(B_s^H)^j.
\end{align*}
with $g_\gm(s)=(1-\frac{\alpha\gm}{2})^{-\frac{s}{\gamma}}$.
We now wish to see that this relation has to be interpreted as $V(\bx_{{\un{t}}})\le e^{-\frac{\alpha({\un{t}}-{\un{s}})}{2}}V(\bx_{{\un{s}}})$, up to a remainder term.

\smallskip

\noindent
\textit{Step 2: Upper bound for $|\bar{H}_{{\un{t}}}^\gm-\bar{H}_{{\un{s}}}^{\gamma}|$.}
For every $(i,j)\in\{1,\ldots,d\}\times\{1,\ldots,q\}$, set $f^{i,j}_\gamma(s,x)=g_\gm(s) (\nabla V)_i(x)\sigma_{i,j}(x)$.
Using that $\sup_{t\in[0,T],\gamma\in(0,\gamma_0]}|g'_\gm(t)|<+\infty$, we check  that $(g_\gm(.))_{\gm\in(0,\gm_0]}$
is a family of Lipschitz continuous functions such that $\sup_{\gm\in(0,\gm_0]}[g_\gm]_{\rm Lip}<+\infty$. Furthermore, 
$(\nabla V)_i$ and $\sigma_{i,j}$ being respectively Lipschitz continuous  and  bounded Lipschitz continuous functions, we deduce that 
$(f^{i,j}_\gm)_{\gm\in(0,\gamma_0]}$ satisfies \eqref{assump:f} with $r=1$.
Applying Lemma \ref{lemme2}, we obtain that for every $\theta\in(\frac{1}{2},H)$,
$$\frac{|\bar{H}_{\un{t}}^{\gm}-\bar{H}_{\un{s}}^{\gm}|}{(\un{t}-\un{s})^{\theta}}\le C_T\left[(1+|\bx_{\un{s}}^\gm|)
+{C}_T(\un{t}-\un{s})^{\theta}(1+|\bx_{\un{s}}^\gm|^2+(\|\bar{Z}^\gm\|_{\theta,\gamma}^{\un{s},\un{t}-\gamma})^{2})\right]\|B^H\|_{\theta,T}.$$
Now, if $\un{t}-\un{s}\le \eta(\omega)$ defined by \eqref{eq:valeureta},
$$ \|\bar{Z}^\gm\|_{\theta,\gamma}^{\un{s},\un{t}-\gamma}\le \left({2}\|\sigma\|_\infty+ C_T(1+|\bx^{\gamma}_{s}|)\eta^\theta\right)\|B^H\|_{\theta,T}.$$
Owing to the definition of $\eta$, we have $a.s.$
$$ \|B^H(\omega)\|_{\theta,T}\eta^\theta\le C_T$$
where $C_T$ is a deterministic positive number so that
$$ (\|\bar{Z}^\gm\|_{\theta,\gamma}^{\un{s},\un{t}-\gamma})^{2}\le  C_T(\|B^H\|_{\theta,T}^2+1+|\bx^{\gamma}_{s}|^2).$$
Thus,
\begin{align*}
{|\bar{H}_{\un{t}}^{\gm}-\bar{H}_{\un{s}}^{\gm}|}&\le
 C_T\left[(1+|\bx_{\un{s}}^\gm|)(\un{t}-\un{s})^{\theta}+(\un{t}-\un{s})^{2\theta}(1+|\bx_{\un{s}}^\gm|^2+\|B^H\|_{\theta,T}^2)\right]
\|B^H\|_{\theta,T} .
%\\
% &\le  C_T\left[(1+|\bx_{\un{s}}^\gm|+\|B^H\|_{\theta,T})+(\un{t}-\un{s})^{\theta}(1+|\bx_{\un{s}}^\gm|^2)\right]
\end{align*}
Using that $|ab|\le 2^{-1}(|a|^2+|b|^2)$ and that $1+|x|\le C\sqrt{V}(x)$,
we have
$$(1+|\bx_{\un{s}}^\gm|)(\un{t}-\un{s})^{\theta}\|B^H\|_{\theta,T}\le C(V(\bx_{\un{s}}^{\gm})(\un{t}-\un{s})^{2\theta}
+\|B^H\|_{\theta,T}^2).$$
It follows that there exists $C_T>0$ such that for every $\varepsilon>0$
\begin{equation*}
{|\bar{H}_{\un{t}}^{\gm}-\bar{H}_{\un{s}}^{\gm}|}\le \varepsilon(\un{t}-\un{s})V(\bx_{\un{s}}^\gm)\left(\frac{C_T(\un{t}-\un{s})^{2\theta-1}(1+\|B^H\|_{\theta,T})}{\varepsilon}\right)
+C_T(\|B^H\|_{\theta,T}^2+(\un{t}-\un{s})^{2\theta}\|B^H\|_{\theta,T}^3).
\end{equation*}
Now, we choose $\tilde{\eta}_\varepsilon\in(0,\eta)$
$$\frac{C_T(\tilde{\eta}_\varepsilon)^{2\theta-1}(1+\|B^H\|_{\theta,T})}{\varepsilon}\le
1.$$
More precisely, we set
$\tilde{\eta}_\varepsilon=[(C_T(1+\|B^H\|_{\theta,T}))^{-1}\varepsilon]^{\frac{1}{2\theta-1}}\wedge\eta$.
Thus, we obtain that for every
$0\le s\le t\le  T$ such that $\un{t}-\un{s}\le\tilde{\eta}_\varepsilon$,
\begin{equation}\label{eq:entretildeeta}
{|\bar{H}_{\un{t}}^{\gm}-\bar{H}_{\un{s}}^{\gm}|}\le
\varepsilon(\un{t}-\un{s})V(\bx_{\un{s}}^\gm)
+C_T(\|B^H\|_{\theta,T}^2+(\un{t}-\un{s})^{2\theta}\|B^H\|_{\theta,T}^3).
\end{equation}

\smallskip

\noindent
\textit{Step 3: Contracting dynamics for $V(\bar{X}_{\underline{k\tilde{\eta}}})$.}
Choose now $\varepsilon_0>0$ such that there exists $\delta\in(0,1/2)$ satisfying
\begin{equation}\label{eq:eee}
\forall x\in[0,1],\quad e^{-\frac{\alpha}{2}x}(1+\varepsilon_0 x)\le 1-\delta x,
\end{equation}
and set $\tilde{\eta}:=\tilde{\eta}_{\varepsilon_0}$. Plugging the two
previous controls in \eqref{eq:l41}, it follows that for every
$k\in\{1,\ldots,\lfloor \frac{T}{{\tilde{\eta}}}\rfloor\}$,
\begin{equation*}
V(\bx_{\un{k\tilde{\eta}}})\le V(\bx_{\un{{(k-1)}\tilde{\eta}}})(1-\delta
\ab_k)+C_T(1+\|B^H\|_{\theta,T}^3)+\sum_{l=\frac{\un{(k-1)\tilde{\eta}}}{\gm}+1}^{{\frac{\un{k\tilde{\eta}}}{\gamma}}}(\beta\gamma+C|\Delta_l|^2),
\end{equation*}
where $\ab_k=\un{k\tilde{\eta}}-\un{{(k-1)}\tilde{\eta}}$. Note that we can
apply \eqref{eq:eee} since
$$\ab_k\le 2\tilde{\eta}\le 2\eta\le 2^{1-\frac{1}{\theta}}\le 1.$$
In particular, $\delta\ab_k\le 1/2$. With the convention $\prod_{\emptyset}=1$,
an iteration of this inequality yields for every $k\in\{1,\ldots,\lfloor
\frac{T}{\tilde{\eta}}\rfloor\}$:
\begin{align*}
V(\bx_{\un{k\tilde{\eta}}})&\le V(x)\prod_{l=1}^k(1-\delta
\ab_l)+C_T\|B^H\|_{\theta,T}^3\sum_{m=1}^k\prod_{l=m+1}^k(1-\delta \ab_l)
\\
&+\sum_{m=1}^k\prod_{l=m+1}^k(1-\delta \ab_l)
\sum_{l=\frac{\un{(m-1)\tilde{\eta}}}{\gm}+1}^{{\frac{\un{m\tilde{\eta}}}{\gamma}}}(\beta\gamma+C|\Delta_l|^2).
\end{align*}
%Now, since $\ab_l\le 2\tilde{\eta}\le 2^{1-\theta}$ (thanks to the definition

Then, using the inequality $\log(1+x)\le x$ $\forall x\in(-1,+\infty)$, we have
for every $m\in\{0,\ldots,k\}$ (with the convention $\sum_\emptyset=0$)
$$\prod_{l=m+1}^k(1-\delta \ab_l)=\exp(\sum_{l=m+1}^k\log(1-\delta\ab_l))\le
\exp(-\sum_{l=m+1}^k\delta\ab_l))=\exp(-\delta
\un{k\tilde{\eta}}+\delta\un{m\tilde{\eta}}).$$
Thus
$$\sum_{m=1}^k\prod_{l=m+1}^k(1-\delta \ab_l)\le 
\exp(-\delta\un{k\tilde{\eta}})\sum_{m=1}^k\exp(\delta\tilde{\eta})^m\le 
\exp(\delta\tilde{\eta}-\delta\un{k\tilde{\eta}})\frac{\exp(k\delta\tilde{\eta})-1}{\exp(\delta\tilde{\eta})-1}\le
\frac{C}{\delta\tilde{\eta}}$$
where $C$ is deterministic (and does not depend on $k$). Owing to the definition
of $\tilde{\eta}$ (and thus from that of $\eta$), we have 
$${\tilde{\eta}}^{-1}\le[(C_T(1+\|B^H\|_{\theta,T}){)}^{-1}\varepsilon_0]^{-\frac{1}{2\theta-1}} \lor \eta^{-1}\le
{C}_{\varepsilon_0,T} (1+\|B^H\|^\frac{1}{2\theta-1}). $$
%[(C_T(1+\|B^H\|_{\theta,T}){)}^{-1}\varepsilon_0]^{\frac{1}{2\theta-1}}\wedge\eta$.
%Thus, there exists
%$\gamma
%$$\un{\tilde{\eta}}^{-1}\le C(1+\|B^H\|_{\theta,T}^{\frac{1}{2\theta-1}})$.
It follows that there exists a polynomial function $P_1$ such that
$$C_T\|B^H\|_{\theta,T}^3\sum_{m=1}^k\prod_{l=m+1}^k(1-\delta \ab_l)\le
P_1(\|B^H\|_{\theta,T}).$$
%In the sequel of the proofs the index $\theta$ in $P_\theta,$ which recall the
%dependance of the polynom
%in $\theta$ is dropped.
On the other hand, since $\prod_{l=m+1}^k(1-\delta \ab_l)\le 1$, we also have
\begin{align*}
\sum_{m=1}^k\prod_{l=m+1}^k(1-\delta
\ab_l)\sum_{l=\frac{\un{(m-1)\tilde{\eta}}}{\gm}+1}^{{\frac{\un{m\tilde{\eta}}}{\gamma}}}(\beta\gamma+C|\Delta_l|^2)
\le \sum_{u=1}^{\lfloor
\frac{\un{k\tilde{\eta}}}{\gamma}\rfloor}(\beta\gamma+C|\Delta_u|^2)\le \beta
k\tilde{\eta}+C\sum_{u=1}^{\lfloor
\frac{\un{k\tilde{\eta}}}{\gamma}\rfloor}|\Delta_u|^2.
\end{align*}
We deduce that for every
$k\in\{1,\ldots,\lfloor \frac{T}{{\tilde{\eta}}}\rfloor \}$:
\begin{align}\label{eq:controlptk1}
V(\bx_{\un{k\tilde{\eta}}})\le
V(x)\exp(-\delta\un{k\tilde{\eta}})+P_1(\|B^H\|_{\theta,T})
+C Q_\gamma( B^H_t,0\le t\le T),
\end{align}
where $P_1$ is a polynomial function and $Q_\gamma$ is defined by
\begin{equation}\label{eq:qgamma}
Q_\gamma( (w(t))_{t\in[0,T]})=\sum_{k=1}^{\lfloor
\frac{T}{\gamma}\rfloor}|w(k\gamma)-w((k-1)\gamma)|^2.
\end{equation}
Owing to the definition of $\|B^H\|_{\theta,T}$, one checks that for every
$\gamma\in(0,\gamma_0]$
$$ Q_\gamma( B^H_t,{t\in[0,T]})\le \gamma^{2\theta-1} T
\|B^H\|_{\theta,T}^{2}\le C_T \|B^H\|_{\theta,T}^{2}.$$
Thus, denoting by $P$ the polynomial function defined by $P(v)=P_1(v)+C_T v^2$,
 we deduce from \eqref{eq:controlptk1} that for every
$k\in\{1,\ldots,\lfloor \frac{T}{\tilde{\eta}}\rfloor \}$:
 \begin{align}\label{eq:controlptk}
V(\bx_{\un{k\tilde{\eta}}})\le
V(x)\exp(-\delta\un{k\tilde{\eta}})+P(\|B^H\|_{\theta,T}).
\end{align}

\smallskip

\noindent
\textit{Step 4: Contracting dynamics for $V(\bar{X}_{T})$.}
We now patch the estimates obtained so far in order to propagate inequality \eqref{eq:controlptk} to $V(\bar{X}_{T})$. Indeed, applying \eqref{eq:controlptk} with $k=\lfloor {\tilde{\eta}}^{-1}{T}\rfloor$, we obtain
\begin{align*}
V(\bx_{\un{\lfloor {\tilde{\eta}}^{-1}{T}\rfloor\tilde{\eta}}})\le V(x)\exp(-\delta{\un{\lfloor {\tilde{\eta}}^{-1}{T}\rfloor\tilde{\eta}}})+P(\|B^H\|_{\theta,T}),
\end{align*}
and owing again to \eqref{eq:l41}, \eqref{eq:entretildeeta} (applied with $s=\lfloor {\tilde{\eta}}^{-1}{T}\rfloor\tilde{\eta}$ and $t=T$) and \eqref{eq:eee}, we deduce that
\begin{align}\label{l42}
V(\bx_{\un{T}})\le V(x)\exp(-\delta{\un{\lfloor {\tilde{\eta}}^{-1}{T}\rfloor\tilde{\eta}}})+\tilde{P}(\|B^H\|_{\theta,T})
\end{align}
where $\tilde{P}$ is a polynomial function.
% Applying this inequality with $k=\lfloor \frac{T}{\tilde{\eta}}\rfloor$, we
% obtain
% \begin{align}\label{l42}
% V(\bx_{\un{T}})\le V(x)\exp(-\delta \lfloor
% \frac{T}{\tilde{\eta}}\rfloor\tilde{\eta})+P(\|B^H\|_{\theta,T}).
% \end{align}
% \textcolor{red}{$ \un{\lfloor \frac{T}{\tilde{\eta}}\rfloor \tilde{\eta}} \neq \un{T}$ 
% donc je pense que ce n'est pas vraiment $V(\bx_{\un{T}})$ au-dessus, cela ne doit pas changer grand chose...}
Finally, we want to control $V(\bar{X}_T)-V(\bar{X}_{\un{T}})$. The function
$\nabla V$ being sublinear and $D^2V$ being bounded, we deduce from the Taylor
formula that for every $x,y\in\ER^d$,
$$V(y)\le V(x)+C(|x|.|y-x|+|y-x|^2).$$
Applying this inequality with $x=\bar{X}_{\un{T}}$ and $y=\bar{X}_T$ and taking
advantage of the assumptions on $b$, we have
\begin{align}\label{eq:detbarat}
V(\bx_T)&\le V(\bx_{\un{T}})+C\left[\gm(1+ |\bx_{\un{T}}|^2)+(1+|\bx_{\un{T}}|)
|B^H_T-B^H_{\un{T}}|+|B^H_T-B^H_{\un{T}}|^2\right]\\
&\le V(\bx_{\un{T}})(1+C\gm)+C(1+\|B^H\|_{\theta,T}^2),
\end{align}
where in the second line, we again used the elementary inequality
$|ab|\le2^{-1}(|a|^2+|b|^2)$ and the
fact that $|x|^2\le C V(x)$.
Combined with \eqref{l42}, the previous inequality yields:
\begin{align*}
V(\bx_T)\le V(x)\exp(-\delta {\un{\lfloor {\tilde{\eta}}^{-1}{T}\rfloor\tilde{\eta}}})(1+C\gamma)+P_{1,\theta}(\|B^H\|_{\theta,T}),
\end{align*}
where $P_{1,\theta}$ denotes the polynomial function defined by $P_{1,\theta}(v)=\tilde{P}(v)+C(1+v^2$). Finally, since $\exp(-\delta {\un{\lfloor {\tilde{\eta}}^{-1}{T}\rfloor\tilde{\eta}}})\le e^{-\delta (T-\tilde{\eta}-\gamma)}$, since $T\ge1$ and $\tilde{\eta}\le 2^{\frac{1}{\theta}}<1$, one can find  $\gm_0>0$ such that $T-\delta\tilde{\eta}-\gamma_0>0$ and such that,
$$\exp(-\delta {\un{\lfloor {\tilde{\eta}}^{-1}{T}\rfloor\tilde{\eta}}})(1+C\gamma)\le \rho\quad a.s.$$
Inequality \eqref{VX}  {for $p=1$} follows.\\

\smallskip

\noindent
\textit{Step 5: Inequality \eqref{VX} for $p>1$.} We recall that for every $p>0$, there exists $c_p>0$ such that for every  $u,v\in\ER$, the following  inequality holds:   $|u+v|^p\le |u|^p+c_p(|v|.| u|^{p-1}+|v|^p)$. Thus, by the Young inequality, it follows that for every $\varepsilon>0$, there exists $c_{\varepsilon,p}>0$ such that $|u+v|^p\le (1+\varepsilon)|u|^p+c_{\varepsilon,p}|v|^p$ for every $u,v \in \ER$ and $p\ge1$. Applying this inequality,  we deduce from the case $p=1$  that
$$V^p(\bx^\gm_T)\le \rho^p(1+\varepsilon) V^p(x)+C_TP_{\theta}(\|B^H\|_{\theta,1}))^p.$$
Since $\rho<1$, we can choose $\varepsilon>0$ such that $\tilde{\rho}=\rho^p(1+\varepsilon)<1$. It follows that 
$$V^p(\bx^\gm_T)\le \tilde{\rho} V^p(x)+P_{p,\theta}(\|B^H\|_{\theta,T})$$
where $P_{p,\theta}$ is again a polynomial function.\\
%\textcolor{red}{
Now, let us focus on  \eqref{eq:sup-V}. We only give the main ideas of the proof when $p=1$ (the extension to $p>1$ again follows from the inequality $|u+v|^p\le 2^{p-1}(|u|^p+|v|^p)$). By  \eqref{eq:controlptk}, 
the announced inequality holds taking the supremum of the left-hand side of \eqref{eq:sup-V} for every 
$\un{k\tilde{\eta}}$ with $k\in \{1,\ldots,\lfloor \frac{T}{\tilde{\eta}}\rfloor\}$. Then, for every $t\in[\un{(k-1)\tilde{\eta}},\un{k\tilde{\eta}}]$, it remains to control (uniformly in $k$) $V(\bar{X}_t)$ in terms of $V(\bar{X}_{\un{(k-1)\tilde{\eta}}})$.   By \eqref{entretildeeta2} and \eqref{eq:entretildeeta}, we obtain such a control for every discretization time between $\un{(k-1)\tilde{\eta}}$ and $\un{k\tilde{\eta}}$.  Then, it is enough to control uniformly $V(\bar{X}_t)$ in terms of $V(\bar{X}_{\un{t}})$. This can be done similarly as in inequality  \eqref{eq:detbarat}.%}

\smallskip

\noindent
\textit{Step 6: Proof of the H\"older bound \eqref{eq:holder-bnd-with-V}.} Let $s,t\in[0,T]$ with $0\le s<t\le T$. We have
$$\bx^\gm_t-\bx^\gm_s=\int_s^t b(\bx^\gm_{\un{u}})du+\bar{Z}^\gm_t-\bar{Z}^\gm_s.$$
First, since $|b(x)|\le C\sqrt{V}(x)\le C(1+V(x))$,
$$|\int_s^t b(\bx^\gm_{\un{u}})du|\le C (t-s)(1+\sup_{u\in[0,T]} V(\bx_{\un{u}}))$$
and it follows from $(i)$ that 
$$\sup_{0\le s<t\le T}\frac{|\int_s^t b(\bx^\gm_{\un{u}})du|}{(t-s)^\theta}\le C_T (V(x)+{P}_{p,\theta}(\|B^H\|_{\theta,T})).$$
Thus, we can only focus on the increment of $\bar{Z}^\gm$. By Lemma \ref{lemme3}, for every $u,v\in[0,T]$ such that $\un{v}-\un{u}\le \eta$ (where $\eta$ is given by \eqref{eq:valeureta}),
$$ {|\bar{Z}^\gamma_{\un{v}}-\bar{Z}^\gamma_{\un{u}}|}\le  {(\un{v}-\un{u})^\theta}\left(2\|\sigma\|_\infty+ C_T(1+\sup_{s\in[0,T]}|\bx_{\un{s}}|)\eta^\theta\right)\|B^H\|_{\theta,T}.$$
Using the concavity of $x\mapsto x^\theta$ on $\ER_+$, we have for every $s_1,s_2\in[0,T]$ being such that $|s_2-s_1|\le \gamma$,
$$|\bar{Z}^\gm_{s_2}-\bar{Z}^\gm_{s_1}|\le   2^{1-\theta}\|\sigma\|_\infty (s_2-s_1)^\theta \|B^H\|_{\theta,T}$$
and   we derive that for every $u,v\in[0,T]$ with $|u-v|\le\eta$,
$$ {|\bar{Z}^\gamma_{{v}}-\bar{Z}^\gamma_{{u}}|}\le  C_T{({v}-{u})^\theta}\left(1+ (1+\sup_{s\in[0,T]}|\bx_{\un{s}}|)\eta^\theta\right)\|B^H\|_{\theta,T}.$$
Now, by the very definition of $\eta$, we have $\eta^\theta\|B^H\|_{\theta,T}\le 1$. Then, since $|x|^2\le C V(x)$, we have in particular that $|x|\le C V(x)$ (using that $\inf_{x\in\ER^d} V(x)>0$) and  we deduce from the first part of this proposition that for every $u,v\in[0,T]$ with $|u-v|\le\eta$:
\begin{equation}\label{eq:lemme3}
 {|\bar{Z}^\gamma_{{v}}-\bar{Z}^\gamma_{{u}}|}\le  C_T({v}-{u})^\theta  (V(x)+\tilde{P}(\|B^H\|_{\theta,T})),
 \end{equation}
where $\tilde{P}$ is a polynomial function.\\
We want now to make use of the previous inequality to control $\bar{Z}^\gm_t-\bar{Z}^\gm_s$ for every  $0\le s<t\le T$.  We divide  $[s,t]$ in intervals of length lower than $\eta$. More precisely,
setting $s_k=s+ k\lfloor \eta\rfloor$, we have  
$$\bar{Z}^\gm_t-\bar{Z}^\gm_s=\bar{Z}^\gm_t-\bar{Z}^\gm_{s_{\lfloor\frac{t-s}{\eta}\rfloor}}+\sum_{k=1}^{\lfloor\frac{t-s}{\eta}\rfloor} \bar{Z}^\gm_{s_{k}}-\bar{Z}^\gm_{s_{k-1}}.$$
Then, we deduce from \eqref{eq:lemme3} that
\begin{align*}
 |\bar{Z}^\gm_t-\bar{Z}^\gm_s|&\le  C_T\left((t-s_{\lfloor\frac{t-s}{\eta}\rfloor})^\theta +\lfloor\frac{t-s}{\eta}\rfloor\eta^\theta\right)
  (V(x)+\tilde{P}(\|B^H\|_{\theta,T}))\\
  &\le C_T\left((t-s)^\theta +(t-s)\eta^{\theta-1}\right)
  (V(x)+\tilde{P}(\|B^H\|_{\theta,T})).
\end{align*}
Thus, using \eqref{eq:lemme3} if $t-s\le\eta$ or the fact that $(t-s)\eta^{\theta-1}\le (t-s)^\theta$ if $t-s\ge \eta$, we deduce that there exists $C_T>0$ such that for every 
$0\le s< t\le T$,
\begin{equation*}
 {|\bar{Z}^\gamma_{{t}}-\bar{Z}^\gamma_{{s}}|}\le  C_T({t}-{s})^\theta  (V(x)+\tilde{P}(\|B^H\|_{\theta,T})).
 \end{equation*} 
The result \eqref{eq:holder-bnd-with-V} follows.

\section{Tightness properties}\label{section4}
In the following proposition, we obtain some $a.s.$ tightness results for the sequence $({\cal P}^{(n,\gamma)}(\omega,d\alpha))_{n\ge1}$. Using that the controls established in Proposition \ref{lemme4} are uniform in $\gamma$, we also show that tightness properties also hold for the set of its limiting measures $({\cal U}^{(\infty,\gamma)}(\omega,\theta))_\gamma$  defined by
$${\cal U}^{(\infty,\gamma)}(\omega,\theta)=\left\{\mu\in\bar{\cal C}^\theta(\ER_+,\ER^d),\exists (n_k(\omega))_{k\ge1},{\cal P}^{(n_k(\omega),\gamma)}(\omega,d\alpha)\xrn{k\nrn}\mu\right\}.$$ 
\begin{prop}\label{prop:ntendinfty}
Assume $\mathbf{(C)}$. Then, there exists $\gamma_0>0$ such that,\\
(i) For every $\gamma\in(0,\gamma_0]$ and $p\ge1$, $a.s.$,
$$\limsup_{n\rightarrow+\infty}\frac{1}{n}\sum_{k=1}^{n} V^p(\bx^\gm_{\gamma(k-1)})\le
C_p\ES[|P_{p,\theta}(\|B^H\|_{\theta,1})|]<+\infty.$$
where $C_p$ does not depend on $\gm$ and $P_{p,\theta}$ is a polynomial function.\\
(ii) For every $\theta\in(1/2,H)$, for every $\gamma\in(0,\gamma_0]$, $({\cal P}^{(n,\gamma)}(\omega,d\alpha))_{n\ge1}$ is almost surely tight on $\bar{\cal C}^\theta(\ER_+,\ER^d)$.\\
(iii) For every $\theta\in(1/2,H)$, $({\cal U}^{(\infty,\gamma)}(\omega,\theta))_{\gamma\in(0,\gamma_0]}$ is $a.s.$ tight in $\bar{\cal C}^\theta(\ER_+,\ER^d)$.
%(iii) Then, every weak limit of $({\cal P}^{(n,\gamma)}(\omega,d\alpha))_{n\ge1}$ is an adapted stationary solution of the discretized equation. Denote by $\mu^\gamma$ the initial distribution of such solution. $\mu^\gamma$ has some moments of any order and  
%$$\frac{1}{n}\sum_{k=1}^n f(\bx^\gm_{\gm(k-1)})\xrn{n\nrn}\mu^\gamma(f)\quad a.s.$$
%for any continuous function $f$ with polynomial growth.
\end{prop}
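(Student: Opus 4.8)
The plan is to derive all three assertions from the $\gm$-uniform estimates of Proposition \ref{lemme4}, the flow property of the Euler scheme, and an ergodic averaging argument for the driving noise.

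For (i), I would first use that the scheme \eqref{eq:euler-scheme-x} restarts from any grid point: fixing an integer $T_0\ge2$ and the coarse instants $\un{mT_0}$ ($m\ge0$), the cocycle identity $\bx^\gm_{\un{mT_0}+\cdot}=\Phi^\gm(\bx^\gm_{\un{mT_0}},\theta_{\un{mT_0}}B^H)$ lets me apply \eqref{VX} on each block $[\un{mT_0},\un{(m+1)T_0}]$, whose length lies in $[T_0-\gm,T_0+\gm]$ and hence stays bounded away from the small-time regime. This gives the one-step contraction $V^p(\bx^\gm_{\un{(m+1)T_0}})\le\rho\,V^p(\bx^\gm_{\un{mT_0}})+P_{p,\theta}(\Xi_m)$, where $\rho<1$ is uniform in $\gm\le\gm_0$ and $\Xi_m$ denotes $\|B^H\|_{\theta}$ computed over a fixed integer window containing the $m$-th block (a crude but $m$-stationary majorant of the shifted seminorm). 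Iterating and averaging over $m$, the geometric factor yields $\limsup_M\frac1M\sum_{m<M}V^p(\bx^\gm_{\un{mT_0}})\le\frac1{1-\rho}\,\limsup_M\frac1M\sum_{m<M}P_{p,\theta}(\Xi_m)$.

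To return to the fine sum of the statement, I would bound each term $V^p(\bx^\gm_{\gm(k-1)})$ with $\gm(k-1)$ lying in block $m$ by $C\big(V^p(\bx^\gm_{\un{mT_0}})+P_{p,\theta}(\Xi_m)\big)$ via \eqref{eq:sup-V}. Each block carries $\approx T_0/\gm$ fine instants while $[0,n\gm]$ contains $\approx n\gm/T_0$ blocks, so the two cardinalities cancel and the fine Cesàro average is dominated by a $\gm$-independent multiple of the coarse one. Since $(\Xi_m)_m$ is a stationary functional of the increments of $B^H$ under the ergodic noise shift (cf. \cite{hairer}), Birkhoff's theorem gives $\frac1M\sum_{m<M}P_{p,\theta}(\Xi_m)\to\ES[P_{p,\theta}(\Xi_0)]$ a.s.; this limit is finite by Fernique's theorem (the $\theta$-H\"older seminorm of a Gaussian process has Gaussian tails, hence all polynomial moments), and by stationarity of increments and self-similarity of $B^H$ it can be rewritten as $C_p\,\ES[|P_{p,\theta}(\|B^H\|_{\theta,1})|]$ after relabelling the polynomial, which is exactly (i). The delicate point is the \emph{uniformity in $\gm$}: it relies on the uniform $\rho$ and constants of Proposition \ref{lemme4}, on the exact cancellation of the per-block cardinality, and on the fact that the ergodic limit is a deterministic number free of $\gm$.

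For (ii)--(iii) I would upgrade this scalar bound to path functionals. Fix $\theta<\theta'<H$; applying \eqref{eq:sup-V} and \eqref{eq:holder-bnd-with-V} (with exponent $\theta'$) to each shifted path $\bx^\gm_{\gm(k-1)+\cdot}$ restricted to $[0,N]$ and averaging exactly as above, I obtain, a.s. and uniformly in $\gm\le\gm_0$, a finite bound on $\limsup_n\frac1n\sum_{k=1}^n\big(\sup_{t\le N}V^p(\bx^\gm_{\gm(k-1)+t})+\|\bx^\gm_{\gm(k-1)+\cdot}\|_{\theta',N}^{q}\big)$ for every $N$. Relatively compact subsets of $\bar{\cal C}^\theta(\ER_+,\ER^d)$ are produced as closed balls for these stronger functionals, since a bound on the $\theta'$-seminorm forces $\omega_{\theta,N}(f,\delta)\le\delta^{\theta'-\theta}\|f\|_{\theta',N}\to0$ as $\delta\to0$; using the $2^{-N}$-weighted structure of $\delta_\theta$ it suffices to control the restrictions to each $[0,N]$ and sum the exceptional probabilities, choosing radii $R_N$ large enough that Markov's inequality gives mass $\le\varepsilon2^{-N}$ on each bad event. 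This produces, for fixed $\gm$, a compact $K$ with $\limsup_n{\cal P}^{(n,\gm)}(\omega,K^c)\le\varepsilon$, whence the a.s. tightness of $({\cal P}^{(n,\gm)})_n$ of (ii) (finitely many small-$n$ terms being harmless). For (iii), the limsup bounds being uniform in $\gm\le\gm_0$, the \emph{same} $K$ works for all $\gm$: for any weak limit $\mu\in{\cal U}^{(\infty,\gm)}(\omega,\theta)$ along a subsequence $(n_k)$, the lower semicontinuity of the functionals makes $K^c$ open, so the Portmanteau inequality together with Markov gives $\mu(K^c)\le\liminf_k{\cal P}^{(n_k,\gm)}(\omega,K^c)\le\varepsilon$ uniformly in $\gm$, which is the asserted tightness of $({\cal U}^{(\infty,\gm)}(\omega,\theta))_{\gm\le\gm_0}$.
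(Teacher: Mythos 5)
Your treatment of (i) and (ii) is sound and follows essentially the same route as the paper: a block decomposition, the $\gamma$-uniform contraction \eqref{VX} of Proposition \ref{lemme4}, the Birkhoff/Maruyama ergodic theorem for the increments of $B^H$ combined with Fernique's theorem, and then the sup-bound \eqref{eq:sup-V} together with the cardinality cancellation (about $T_0/\gamma$ fine instants per block versus $n\gamma/T_0$ blocks in $[0,n\gamma]$) to pass from the coarse to the fine Ces\`aro average. Your variant with blocks anchored at the grid points $\un{mT_0}$ and the $\gamma$-independent majorant windows $\Xi_m$ is, if anything, cleaner than the paper's choice of integer anchors, since restarting at a grid point gives the exact cocycle identity; the paper instead works at integer times and cites Theorem 1.4 of \cite{suquet} where you reconstruct the H\"older-space compactness criterion by hand.

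The gap is in (iii). Your compact set is $K=\bigcap_N\{f:\|f\|_{\theta',N}\le R_N\}$ (plus a bound at the origin), so $K^c=\bigcup_N B_N$ is a countable union, and what the ergodic/Markov step supplies are only the per-level bounds $\limsup_n{\cal P}^{(n,\gamma)}(\omega,B_N)\le\varepsilon 2^{-N}$, uniformly in $\gamma$. The inequality you then invoke, $\limsup_n{\cal P}^{(n,\gamma)}(\omega,K^c)\le\varepsilon$, does not follow from these: $\limsup_n\sum_N{\cal P}^{(n,\gamma)}(\omega,B_N)$ is not dominated by $\sum_N\limsup_n{\cal P}^{(n,\gamma)}(\omega,B_N)$, because mass may drift to levels $N$ growing with $n$ (the generic failure is $a_n^{(N)}={\bf 1}_{\{N=n\}}$). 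For fixed $\gamma$, i.e.\ for (ii), this is harmless: one replaces the limsups by $\sup_n$, which is a.s.\ finite since each ${\cal P}^{(n,\gamma)}$ is finitely supported and individually tight (your remark that the small-$n$ terms are harmless), at the price of random, $\gamma$-dependent radii $R_N$. But that repair destroys exactly what (iii) needs, namely one compact valid for all $\gamma\le\gamma_0$. The correct order of operations --- and the paper's --- is the reverse of yours: first transfer each fixed-$T$ functional bound to the limit measures via portmanteau/Fatou, obtaining for every weak limit $\mu^{(\gamma)}$ the genuine measure bounds $\mu^{(\gamma)}_0(V)\le C/(1-\rho)$ and $\mu^{(\gamma)}({\bf 1}_{\{\omega_{\theta,T}(\cdot,\delta)\ge\varepsilon\}})\le C\delta^{\theta'-\theta}$ with $C$ independent of $\gamma$ (this is \eqref{eq:prop221} passed to the limit); only then assemble the compact set, using the countable subadditivity of each measure $\mu^{(\gamma)}$ itself (or the criterion of \cite{suquet} applied to the family $(\mu^{(\gamma)})_{\gamma\le\gamma_0}$), so that no interchange of a limit with an infinite sum ever occurs. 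With this reordering your argument closes; as written, the final chain $\mu(K^c)\le\liminf_k{\cal P}^{(n_k,\gamma)}(\omega,K^c)\le\varepsilon$ is not justified.
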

\begin{proof}
\textit{(i)} 
\textbf{Case $p=1$} :  We first focus on the sequence $(\frac{1}{N}\sum_{\ell=0}^{N-1} V(\bx^\gm_\ell))_{N\ge1}$. Note that, at this stage, we consider the values of the Euler scheme at {times} $0$, $1$, $2$,~\ldots (which do not depend on $\gamma$). that  We set 
$$\forall \ell\ge 0,\quad(\delta_\ell B^H)_t=B^H_{\ell+t}-B^H_{\ell}.$$ By Proposition \ref{lemme4} applied with $T=1$, we have for every $k\ge1$
$$V(\bx^\gm_{\ell})\le \rho V(\bx^\gm_{\ell-1})+P_{1,\theta}(\|\delta_{\ell-1}B^H\|_{\theta,1})$$
with $\rho\in(0,1)$. An iteration yields for every $\ell\ge1$
$$V(\bx^\gm_{\ell})\le \rho^\ell V(x)+\sum_{m=0}^{\ell-1}\rho^{\ell-1-m}P_{1,\theta}(\|\delta_{m}B^H\|_{\theta,1}).$$
Setting $U_m=P_{1,\theta}(\|\delta_{m}B^H\|_{\theta,1})$ and summing over $\ell$, we obtain
\begin{align*}
\frac{1}{N}\sum_{\ell=0}^{N-1} V(\bx^\gm_{\ell})&\le \frac{V(x)}{N(1-\rho)}+\frac{1}{N}\sum_{\ell=0}^{N-1}\sum_{m=0}^{\ell-1}\rho^{\ell-1-m}U_m\\
&\le \frac{V(x)}{N(1-\rho)}+\frac{1}{N}\sum_{m=0}^{N-2}U_m\sum_{\ell=m+1}^N\rho^{\ell-1-m}
\le \frac{V(x)}{N(1-\rho)}+\frac{1}{N(1-\rho)}\sum_{m=0}^{N-2}U_m.
\end{align*}

Let us remark that since $ B^H$ is a   $\bar{\cal C}^\theta([0,1],\ER^q) $  valued Gaussian random variable, the 
norm $ \|B^H\|_{\theta,1} $ has finite moments  of every order, which is classical consequence 
of Fernique Lemma. 
Hence 
\begin{equation}\label{eq:erg}
\ES[|P_{1,\theta}(\|B^H\|_{\theta,1})|]<+\infty.
\end{equation}
Then, since $(\delta_{m}B^H)_{m\ge1}$ is ergodic (see Remark \ref{maruyamaremark} for background and details). 
We have
\begin{equation}\label{p11}
\frac{1}{N}\sum_{m=0}^{N-2}U_m\xrn{N\nrn}\ES[P_{1,\theta}(\|B^H\|_{\theta,1})]\quad a.s.
\end{equation}
and it follows that 
\begin{equation}\label{p12}
\limsup_{N\rightarrow+\infty}\frac{1}{N}\sum_{\ell=0}^{N-1} V(\bx^\gm_{\ell})\le \frac{1}{1-\rho}\ES[P_{1,\theta}(\|B^H\|_{\theta,1})]\quad a.s.
\end{equation}
We want now to use this result to control the $a.s.$ asymptotic behavior of $(\frac{1}{n}\sum_{k=0}^{n-1} V(\bx^\gm_{\gamma k}))_{n\ge1}$. 
By the second point of Proposition \ref{lemme4}(i), for every $\ell\ge0$,
$$\sup_{k\in[\lfloor \frac{\ell}{\gamma}\rfloor+1,\lfloor \frac{\ell+1}{\gamma}\rfloor]} V(\bx^\gm_{\gm k})\le C \left(V(\bx^\gm_{\ell})+P_{1,\theta}(\|\delta_{\ell}B^H\|_{\theta,1})\right).$$
As a consequence, setting $N=\lfloor \gamma (n-1)\rfloor+1$, we have
\begin{align*}
\frac{1}{n}\sum_{k=0}^{n-1} V(\bx^\gm_{\gamma k})&\le \frac{N}{n}\frac{1}{N}\left(V(x)+\sum_{\ell=0}^{N-1}\sum_{k=\lfloor \frac{\ell}{\gamma}\rfloor+1}^{\lfloor \frac{\ell+1}{\gamma}\rfloor} V(\bx^\gm_{\gm k})\right)\\
&\le C(\gamma+\frac{1}{n})(\frac{1}{\gm}+1)\left(\frac{1}{N}\sum_{\ell=0}^{N-1} \left(V(\bx^\gm_{\ell})+P_{1,\theta}(\|\delta_{\ell}B^H\|_{\theta,1})\right)\right).
\end{align*}
Using \eqref{p11} and \eqref{p12}, the result follows when $p=1$.

The proof when $ p > 1 $ is  very similar to the case $p=1$ and is left to the reader.\\

\textit{(ii)} If for a sequence $(\mu_n)_{n\ge1}$ of probability measures on $\ER^d$, there exists a positive function $\varphi:\ER^d\mapsto (0,+\infty)$
such that $\sup_{n\ge1}\mu_n(\varphi)<+\infty $ and $\lim_{|x|\rightarrow+\infty}\varphi(x)=+\infty$,
one classically derives that $(\mu_n)_{n\ge1}$ is tight on $\ER^d$ (see $e.g.$ \cite{duflo} p. 41). Thus, by $\textit{(i)}$, $({\cal P}^{(n,\gamma)}_0(\omega,dx))$ is $a.s.$ tight on $\ER^d$.   Owing to some classical tightness results in Hölder spaces (see $e.g.$ \cite{suquet}, Theorem 1.4), we deduce that we only have  to prove that for every $T>0$, for every $\theta\in(1/2,H)$, for every $\varepsilon>0$,
\begin{equation}\label{omegaTT}
\limsup_{\delta\rightarrow0}\limsup_{n\rightarrow+\infty}\frac{1}{n}\sum_{k=1}^n {\bf 1}_{\{\omega_{\theta,T}(\bar{X}^\gamma_{\gamma(k-1)+.},\delta)\ge\varepsilon\}}=0,
\end{equation}
where we recall that 
$$\forall\, T>0,\quad \omega_{\theta,T}(f,\delta):=\sup_{0\le s<t<T,0\le |t-s|\le\delta}\frac{|f(t)-f(s)|}{|t-s|^\theta}.$$
By Proposition \ref{lemme4} \emph{(ii)} with $\theta'\in(\theta,H)$,
$$\sup_{0\le s<t\le T}\frac{|\bx^\gm_t-\bx^\gm_s|}{(t-s)^\theta}\le C_T (t-s)^{\theta'-\theta}(V(x)+\tilde{P}_{\theta'}(\|B^H\|_{\theta',T}))$$
so that for every $s,t\in[0,T]$ such that $s<t$ and $t-s\le\delta$,
 $$\sup_{0\le s<t\le T}\frac{|\bx^\gm_t-\bx^\gm_s|}{(t-s)^\theta}\le C_T \delta^{\theta'-\theta}(V(x)+\tilde{P}_{\theta'}(\|B^H\|_{\theta',T})).$$
As in $(i)$, this property can be extended to the shifted process: we have for every $k\ge0$
\begin{equation}\label{ghtrz}
\omega_{\theta,T}(\bar{X}^\gamma_{\gamma k+.},\delta)=\sup_{0\le s<t\le T, \; t-s \le \delta}\frac{|\bx^\gm_{\gamma k+t}-\bx^\gm_{\gm k+s}|}{(t-s)^\theta}\le C_T \delta^{\theta'-\theta}(V(\bar{X}^\gm_{\gm k})+\tilde{P}_{\theta'}(\|\delta_{ k}B^H\|_{\theta',T}).
\end{equation}

Since $(\delta_k B^H)_{k\ge1}$ is ergodic (see Remark \ref{maruyamaremark} for details) and since by the Fernique Lemma  $\| B^H\|_{\theta',T}$ has moments of any order, we have 
$$\frac{1}{n}\sum_{k=1}^n\tilde{P}_{\theta'}(\|\delta_{ k}B^H\|_{\theta',T})\xrn{n\nrn} \ES[\tilde{P}_{\theta'}(\| B^H\|_{\theta',T})]\quad a.s.$$
Then, we deduce from $(i)$ and \eqref{ghtrz} that
$$\limsup_{n\rightarrow+\infty}\frac{1}{n}\sum_{k=1}^n {\omega_{\theta,T}(\bar{X}^\gamma_{\gamma(k-1)},\delta)}\le C\delta^{\theta'-\theta}.$$
By the Markov inequality, we obtain for every $\varepsilon>0$,
\begin{equation}\label{eq:prop221}
\limsup_{n\rightarrow+\infty}\frac{1}{n}\sum_{k=1}^n {\bf 1}_{\{\omega_{\theta,T}(\bar{X}^\gamma_{\gamma(k-1)+.},\delta)\ge\varepsilon\}}\le C\frac{\delta^{\theta'-\theta}}{\varepsilon}
\end{equation} 
and \eqref{omegaTT} follows.

%
%We now make use of the ergodic properties of the increments of the fractional Brownian motion (see Remark \ref{maruyamaremark} for background and details). More precisely, we recall that $(B^H_t)_{t\in\ER}$ is ergodic under the transformation  
%$T_\gamma:\bar{\cal C}^\theta(\ER,\ER^q)\rightarrow\bar{\cal C}^\theta(\ER,\ER^q) $ defined by
%$(T_\gamma(\omega))_t=\omega(\gamma+t)-\omega(\gamma)$ so that, for any functional $F::\bar{\cal C}^\theta(\ER,\ER^q)\rightarrow\ER$
%such that $\ES[|F(B^H_t,t\ge0)|]<+\infty$,
%\begin{equation}\label{ergodicityfrac}
%\PE-a.s.,\quad\frac{1}{n}\sum_{k=1}^n F(B^H_{\gamma k+.}-B^H_{\gamma k})\xrightarrow{n\rightarrow+\infty}\ES[F(B^H_t,t\ge0)].
%\end{equation}
%In particular, since for any $r>0$, $\ES[\|B^H\|_{\theta,T}^r]<+\infty$ (by the Fernique Lemma), we can apply \eqref{ergodicityfrac} to 
%$F: \bar{\cal C}^\theta(\ER,\ER^q)\rightarrow\bar{\cal C}^\theta(\ER,\ER^q)$ defined by 
%$F(\omega)=\|\omega\|_{\theta,T}$. \eqref{p11} for similar arguments) and  we deduce that
%\begin{equation}\label{eq:prop221}
%\limsup_{n\rightarrow+\infty}\frac{1}{n}\sum_{k=1}^n {\bf 1}_{\omega_{\theta,T}(\bar{X}^\gamma_{\gamma(k-1)},\delta)}\le C\delta^{\theta'-\theta}.
%\end{equation}
%The result follows.\\}

\textit{(iii)} Let $\theta\in(1/2,H)$ and  denote by $\mu^{(\gamma)}$ an element of ${\cal U}^{(\infty,\gamma)}(\omega,\theta) $ and by 
$\mu^{(\gamma)}_t$ its marginals. By \eqref{eq:erg} and \eqref{p12},  
$$\forall \gamma\in(0,\gm_0],\quad \mu^{(\gamma)}_0(V)\le \frac{C}{1-\rho}$$
where $\rho$ does not depend on $\gm$. It follows that ${\cal U}_0^{(\infty,\gamma)}(\omega,\theta) $ is $a.s.$ tight in $\ER^d$ (where ${\cal U}_0^{(\infty,\gamma)}(\omega,\theta) $ stands for the set of initial distributions $\mu^{(\gamma)}_0$).\\
Now, since $C$ does not depend on $\gm$ in \eqref{eq:prop221}, we also have for every $T>0$, $\delta>0$ and $\varepsilon>0$
 for every $\theta'>\theta$:
$$\forall \gamma\in(0,\gm_0],\quad \mu^{(\gamma)}({\bf 1}_{\{\omega_{\theta,T}(.,\delta)\ge\varepsilon\}})\le C\delta^{\theta'-\theta}$$
and the announced result follows again from Theorem 1.4 of  \cite{suquet}.
\end{proof}

\begin{Remarque}\label{maruyamaremark}
Some of the arguments of the previous proof are based on the ergodicity of the increments of the fractional Brownian motion.
More precisely, we use the fact that $(B_t^H)_{t\in\ER}$ is ergodic under the transformation  
$T_\xi:\bar{\cal C}^\theta(\ER,\ER^q)\rightarrow\bar{\cal C}^\theta(\ER,\ER^q) $ defined by
$(T_\xi(\omega))_t=\omega(\xi+t)-\omega(\xi)$ $(\xi>0)$, which implies by the Birkhoff theorem that, for any functional $F:\bar{\cal C}^\theta(\ER,\ER^q)\rightarrow\ER$
such that $\ES[|F(B^H_t,t\ge0)|]<+\infty$,
\begin{equation}\label{ergodicityfrac}
\PE-a.s.,\quad\frac{1}{n}\sum_{k=1}^n F(B^H_{\xi k+.}-B^H_{\xi k})\xrightarrow{n\rightarrow+\infty}\ES[F(B^H_t,t\ge0)].
\end{equation}
Note that this ergodic  result is a (classical) consequence of the Maruyama theorem \cite{maruyama} (see also \cite{weber}) which is stated in a slightly different way: let $({\theta}_t)_{t\in\ER}$ denote the standard time-shift defined for $\omega:\ER\rightarrow\ER$ by 
$\theta_t(\omega)=\omega(t+.)$. Then, a centered stationary real Gaussian process $(Y_t)_{t\in\ER}$ is ergodic under $({\theta}_t)_{t\in\ER}$ if its covariance function $r(t)=\ES[Y_tY_0]$ satisfies $r(t)\rightarrow0$ as $t\rightarrow+\infty$. 
This result can be applied to  the stationary (centered) fractional Ornstein-Uhlenbeck process solution to $dY_t=-Y_t dt+ dB_t^H$
(since $r(t)\rightarrow0$, see $e.g.$ \cite{cheridito})). Then  we retrieve \eqref{ergodicityfrac} by using that the increment $B^H_{t+s}-B^H_t$ is a functional of $(Y_t)_{t\ge s}$:
$B^H_{t+s}-B^H_t=Y_{t+s}-Y_t+\int_s^t Y_u du.$
\end{Remarque}
\section{Identification of the weak limits}\label{section5}
\subsection{Weak limits of $({\cal P}^{(n,\gamma)}(\omega,d\alpha))_{n\ge1}$}
\label{identification1}
We have the following result:
\begin{prop}\label{prop2} Assume $\mathbf{(C)}$ and let ${{\cal P}}^{(\infty,\gamma)}(\omega,d\alpha)$ denote a weak limit of $({{\cal P}}^{(n,\gamma)}(\omega,d\alpha))_{n\ge1}$. Then, ${{\cal P}}^{(\infty,\gamma)}(\omega,d\alpha)$ is $a.s.$ an adapted stationary solution 
of \eqref{fractionalSDE0-disc}.
\end{prop}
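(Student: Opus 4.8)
The plan is to show two things about any weak limit ${\cal P}^{(\infty,\gamma)}(\omega,d\alpha)$ of the empirical occupation measures: first that it is \emph{stationary}, i.e.\ invariant under the shifts $\theta_{k\gamma}$, and second that it is \emph{adapted}, i.e.\ it is concentrated on paths satisfying the conditional independence property of Definition~\ref{def:stat-sol-disc} and is genuinely a solution of the discretized equation~\eqref{fractionalSDE0-disc}.

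\textbf{Stationarity.} For the invariance under $\theta_\gamma$, I would exploit the telescoping structure of the empirical measure. For a bounded continuous functional $F:\bar{\cal C}^\theta(\ER_+,\ER^d)\rightarrow\ER$, the difference between ${\cal P}^{(n,\gamma)}(\omega,F)$ and ${\cal P}^{(n,\gamma)}(\omega,F\circ\theta_\gamma)$ is, by the very definition
$${\cal P}^{(n,\gamma)}(\omega,d\alpha)=\frac{1}{n}\sum_{k=1}^n \delta_{\bar{X}^\gamma_{\gamma(k-1)+.}(\omega)}(d\alpha),$$
a telescoping sum: since $\theta_\gamma$ shifts $\bar{X}^\gamma_{\gamma(k-1)+.}$ to $\bar{X}^\gamma_{\gamma k+.}$, one gets
$${\cal P}^{(n,\gamma)}(\omega,F\circ\theta_\gamma)-{\cal P}^{(n,\gamma)}(\omega,F)=\frac{1}{n}\left(F(\bar{X}^\gamma_{\gamma n+.})-F(\bar{X}^\gamma_{0+.})\right),$$
which is bounded by $2\|F\|_\infty/n$ and hence tends to $0$. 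Passing to the weak limit along the extracting subsequence $(n_k)$ yields ${\cal P}^{(\infty,\gamma)}(F\circ\theta_\gamma)={\cal P}^{(\infty,\gamma)}(F)$, giving invariance under $\theta_\gamma$ and therefore under all $\theta_{k\gamma}$. The tightness from Proposition~\ref{prop:ntendinfty} guarantees such weak limits exist. One point of care is that $\theta_\gamma$ must be shown continuous on $\bar{\cal C}^\theta$ so that $F\circ\theta_\gamma$ is again a legitimate test functional.

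\textbf{Identification as a solution and adaptedness.} This is where I expect the real work to lie. The weak limit is a measure on path space, but to call it a solution of~\eqref{fractionalSDE0-disc} one must recover the joint law of $(X^\gamma,B^H)$ and check $X^\gamma=\Phi^\gamma(X^\gamma_0,B^H)$ almost surely, in the sense of equation~\eqref{eq:phi-gamma}. The natural strategy is to work on the enlarged path space carrying both the candidate solution and the driving noise: form the joint empirical measures of the pairs $(\bar{X}^\gamma_{\gamma(k-1)+.},\,B^H_{\gamma(k-1)+.}-B^H_{\gamma(k-1)})$, show these are tight (using Proposition~\ref{prop:ntendinfty} for the first coordinate and Fernique for the second), and identify the limit. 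Because $\Phi^\gamma$ only involves finite sums and the Young integral, the continuity of the discretized It\^o map lets one pass the relation $X^\gamma=\Phi^\gamma(X^\gamma_0,B^H)$ to the limit. The adaptedness property, namely conditional independence of $(X^\gamma_s)_{s\le t}$ and $(B^H_s)_{s\ge t}$ given $(B^H_s)_{s\le t}$, should descend from the corresponding property of each shifted Euler trajectory, which holds because $\bar{X}^\gamma_{\gamma(k-1)+.}$ is built causally from increments of $B^H$ after time $\gamma(k-1)$; the delicate step is to check this conditional-independence structure is preserved under weak convergence, which typically requires testing against products of functionals of the past and the future of the noise and invoking the ergodic shift-invariance recalled in Remark~\ref{maruyamaremark}. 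The main obstacle is precisely this joint identification: ensuring that the limiting measure is supported on genuine solution pairs and not merely on paths with the right marginal, and that the adaptedness survives the limit, since weak convergence alone does not preserve conditional independence without additional uniform-integrability and continuity inputs.
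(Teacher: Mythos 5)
Your stationarity argument (telescoping) and your strategy for identifying the limit as a solution of \eqref{fractionalSDE0-disc} coincide with the paper's: the paper forms exactly the joint empirical measures $\tilde{{\cal P}}^{(n,\gamma)}(\omega,d\alpha,d\beta)=\frac{1}{n}\sum_{k=1}^n\delta_{(\bar{X}^\gamma_{\gamma(k-1)+.},\,B^H_{\gamma(k-1)+.}-B^H_{\gamma(k-1)})}$, gets the second margin under control via the ergodic theorem of Remark~\ref{maruyamaremark} (which in fact gives a.s.\ convergence of that margin to the law of $B^H$, not merely tightness), and then implements your ``continuity of the discretized It\^o map'' step by testing against the bounded continuous functional $F_{t,K}(\alpha,\beta)=\sup_{0\le s\le t}|\alpha_s-\tilde{\Phi}^{\gamma}(\alpha,\beta_+)_s|\wedge K$, which vanishes identically along the empirical measures and hence in the limit; note the map must be fed $\alpha_0$ as initial condition so that $F_{t,K}$ is a functional of the pair $(\alpha,\beta)$ alone.

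For adaptedness, however, there is a genuine gap. First, a misstatement in your causality argument: $\bar{X}^\gamma_{\gamma(k-1)+.}$ is \emph{not} built only from increments of $B^H$ after time $\gamma(k-1)$, since its starting value $\bar{X}^\gamma_{\gamma(k-1)}$ depends on the whole noise path on $[0,\gamma(k-1)]$. What makes each empirical term usable is that one conditions on the entire two-sided past of the noise at time $\gamma(k-1)+t$, so that the path functional $H_{k-1}$ is ${\cal F}_{\gamma(k-1)+t}$-measurable. Second, and more importantly, the mechanism that lets conditional independence survive the weak limit is not the one you invoke. To pass the identity $\ES[f\,g\,h]=\ES[f\,\psi^{g}\,h]$ to the limit (with $\psi^{g}$ the conditional expectation of the future-noise functional given the past), one needs $\psi^{g}$ to be a \emph{continuous} functional of the past; since fBm is not Markov this is delicate, and it is exactly what the weighted H\"older spaces ${\cal W}_{\theta,\delta}$ and Lemmas 4.1--4.3 of \cite{hairer2} supply: the conditional law ${\cal P}_T(\omega,\cdot)$ of $(B^{H,t+T}_s)_{s\le 0}$ given $(B^{H,t}_s)_{s\le 0}=\omega$ is a Feller transition on ${\cal W}_{\theta,\delta}(\ER_-)$. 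Your proposal never introduces this space or this Feller property, and without it the test functional $f\cdot\psi^g\cdot h$ need not be continuous, so weak convergence cannot be applied to it at all.

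The remaining step is then to show
$$\frac{1}{n}\sum_{k=1}^n H_{k-1}\left(J_k-\ES[J_k\,|\,{\cal F}_{\gamma(k-1)+t}]\right)\xrightarrow{n\rightarrow+\infty}0\quad a.s.,$$
where $J_k$ is the future-noise functional. This is a strong law for martingale increments (as in Proposition 6 of \cite{cohen-panloup}), not a consequence of the Maruyama ergodic theorem: the summands are centered conditionally on the past, but they involve $\bar{X}^\gamma$, which is not a stationary shift-functional of the increment sequence to which Remark~\ref{maruyamaremark} applies. So the ``ergodic shift-invariance'' you lean on for this step is the wrong tool; in the paper's proof ergodicity enters only to control the noise margin, while adaptedness rests on the Feller property plus the martingale argument.
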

\begin{Remarque} In the following proof, we will state  some properties ``for every function $f,$ for almost every $\omega$'' and conclude that ``for almost every $\omega,$ for every function $f$'' the property is true. For the sake of completeness, we recall here that such inversions are rigorous  since we work on Polish spaces (in which the distributions and the weak convergence are characterized by some countable family of bounded continuous functions). 
\end{Remarque}
\begin{proof}
In the proof, we denote by $(\tilde{{\cal P}}^{(n)}(\omega,d\alpha, d\beta))_{n\ge 1}$,  the sequence of probability measures on $\bar{{\mathcal C}}^\theta(\ER_+,\ER^d) \times \bar{{\mathcal C}}^\theta(\ER,\ER^q) $ 
with $ \frac12 < \theta<H$   defined by
 $$\tilde{{\cal P}}^{(n,\gamma)}(\omega,d\alpha, d\beta)=\frac{1}{n}\sum_{k=1}^n
 {\delta}_{({\bar{X}}^\gamma_{\gamma(k-1)+.}(\omega), B^H_{(k-1)\gamma +.}(\omega) - B^H_{(k-1)\gamma}(\omega) )}
(d\alpha, d\beta)$$ where 
 $ (B^H_t)_{t\in\ER} $ is the fractional Brownian motion used to build the Euler scheme~\eqref{eq:euler-scheme-x}.
 First, let us recall that by Proposition~\ref{prop:ntendinfty} \emph{(ii)}, $({\cal P}^{(n,\gamma)}(\omega,d\alpha))_{n\ge1}$ is $a.s.$ tight. Thus, we can consider a weak limit  ${{\cal P}}^{(\infty,\gamma)}(\omega,d\alpha)$. Second, one checks that
  $(\tilde{{\cal P}}^{(n,\gamma)}(\omega,d\alpha, d\beta))_{n\ge1}$ is also almost surely tight since each of its margins have this property. Indeed, for the first margin, it is again \emph{(ii)} of Proposition~\ref{prop:ntendinfty}. 
For the second margin, we use that $(B^H_t)_{t\in\ER}$ is ergodic under the transformation  
$T_\gamma:\bar{\cal C}^\theta(\ER,\ER^q)\rightarrow\bar{\cal C}^\theta(\ER,\ER^q) $ (see Remark \ref{maruyamaremark}).  In particular,
\begin{equation}\label{eq:ergoinc}
 \frac{1}{n}\sum_{k=1}^n  {\delta}_{B^H_{(k-1)\gamma +.} - B^H_{(k-1)\gamma}} (d\beta)
 \end{equation}
is converging almost surely to the distribution of $(B^H_t)_{t\in\ER}$ (on $\bar{\cal C}^\theta(\ER,\ER^q)$).
Hence,  the sequence $(\tilde{{\cal P}}^{(n)}(\omega,d\alpha, d\beta))_{n\ge 1}$ is almost surely tight (and thus relatively compact).
Then, if  ${{\cal P}}^{(\infty,\gamma)}(\omega,d\alpha)$ is the limit of a subsequence of $({\cal P}^{(n,\gamma)}(\omega,d\alpha))_{n\ge1}$, maybe  with the help  of a second extraction, it follows  that $a.s.$, there exists a subsequence $(n_k(\omega))_{k\ge0}$ such that
\begin{equation}\label{eq:convptilde}
 {\cal P}^{(n_k,\gamma)}(\omega,d\alpha)\xrn{k\nrn}{\cal P}^{(\infty,\gamma)}(\omega,d\alpha)\quad\textnormal{and}\quad
\tilde{{\cal P}}^{(n_k,\gamma)}(\omega,d\alpha,d\beta)\xrn{n_k\nrn}\tilde{{\cal P}}^{(\infty,\gamma)}(\omega,d\alpha,d\beta)
\end{equation}
where the first margin of $\tilde{{\cal P}}^{(\infty,\gamma)}(\omega,d\alpha,d\beta)$ is obviously ${\cal P}^{(\infty,\gamma)}(\omega,d\alpha)$ and the second one is $a.s.$ the distribution of
$(B_t^H)_{t\in\ER}$ (thanks to \eqref{eq:ergoinc}). Let us also denote by  $(X^{(\infty,\gamma)}_t, B^H_t) $ the coordinate process on $\bar{{\mathcal C}}^\theta(\ER_+,\ER^d) \times \bar{{\mathcal C}}^\theta(\ER,\ER^q) $ 
endowed with the probability $\tilde{{\cal P}}^{(\infty,\gamma)}.$
For $(\alpha,\beta) \in  \bar{{\mathcal C}}^\theta(\ER_+,\ER^d) \times \bar{{\mathcal C}}^\theta(\ER_+,\ER^q) $ we consider the following function 
\begin{equation}\label{eqcvsub}
 \tilde{\Phi}^{\gamma}(\alpha,\beta)_t := \alpha_0 + \int_0^t b(\tilde{\Phi}^{\gamma}(\alpha,\beta)_{\un{s}_{\gamma}}) ds +   \int_0^t  \sigma(\tilde{\Phi}^{\gamma}(\alpha,\beta)_{\un{s}_{\gamma}}) d \beta_s.
 \end{equation}
Please remark that $ \tilde{\Phi}^{\gamma} $ is slightly different from $ \Phi^{\gamma} $ in the way it handles 
the initial condition but 
$$ \tilde{\Phi}^{\gamma}(\alpha,\beta) = \Phi^{\gamma}(a,\beta)$$
for every $\alpha$ such that $ \alpha_0=a.$ 
For $t,\; K > 0 $ let us denote by $F_{t,K}$ the functional defined on $  \bar{{\mathcal C}}^\theta(\ER_+,\ER^d) \times \bar{{\mathcal C}}^\theta(\ER,\ER^q)$
by  $ F_{t,K}(\alpha,\beta)= \sup_{ 0 \le s \le t} |\alpha_s -  \tilde{\Phi}^{\gamma}(\alpha,\beta_+)_s | \wedge K $ 
where $\beta_+=(\beta(t))_{t\ge0}$. The function $F_{t,K}$ is bounded continuous on $  \bar{{\mathcal C}}^\theta(\ER_+,\ER^d) \times \bar{{\mathcal C}}^\theta(\ER,\ER^q).$

\noindent Then,
$$ \ES ( F_{t,K}(X^{(\infty,\gamma)}, B^H))= \lim_{n_l \to \infty} \frac{1}{n_l}\sum_{k=1}^{n_l} F_{t,K}({\bar{X}}_{(k-1)\gamma+.}^\gm,B^H_{(k-1)\gamma +.} - B^H_{(k-1)\gamma} ).$$
By definition of the Euler scheme (even though it is shifted), we have for every $k\ge1$, $ F_{t,K}({\bar{X}}_{(k-1)\gamma+.}^\gamma,B^H_{(k-1)\gamma +.} - B^H_{(k-1)\gamma} )= 0 $
almost surely, and 
$$ X^{(\infty,\gamma)}= \tilde{\Phi}^{\gamma}(X^{(\infty,\gamma)},B^H)$$
almost surely, which ensures that the pair $ (X^{(\infty,\gamma)},B^H) $ is a solution of~\eqref{fractionalSDE0-disc}. \\
The stationarity of  $X^{(\infty,\gamma)}$ follows from the construction. Actually, using the convergence of $({\cal P}^{(n,\gamma)}(\omega,d\alpha))$, we have for every bounded continuous  functional $F:\bar{{\cal C}}^\theta(\ER_+,\ER^d)\rightarrow\ER$,
$$\frac{1}{n}\sum_{k=1}^n F(\bx^\gm_{\gm(k-1)+t+.})-F(\bx^\gm_{\gm(k-1)+.})\xrn{n\nrn}{\ES}[F(X^{(\infty,\gamma)}_{t+.})]-{\ES}[F(X^{(\infty,\gamma)}_{.})]$$
and owing to a change of variable, it is obvious that for every $t\in\gamma \mathbb{N}$,
$$\frac{1}{n}\sum_{k=1}^n F(\bx^\gm_{\gm(k-1)+t+.})-F(\bx^\gm_{\gm(k-1)+.})\xrn{n\nrn}0.$$
It follows that for every $t\in\gamma\EN$, for every $F$,
$${\ES}[F(X^{(\infty,\gamma)}_{t+.})]={\ES}[F(X^{(\infty,\gamma)}_{.})].$$
This property implies that $X^{(\infty,\gamma)}$ is stationary.\\
We now focus on the adaptation of $X^{(\infty,\gamma)}$. In this step, we need to introduce, for a subset $D$ of $\ER$ that contains $0$, the Polish space ${\cal W}_{\theta,\delta}(D)$   that denotes the completion of ${\cal C}_0^{\infty}(D,\ER^q)$ (the space of ${\cal C}^\infty$-functions $f:D\rightarrow\ER^q$ with compact support and  $f(0)=0$)  for the norm
$$\|f\|=\sup_{s,t\in D}\frac{|f(t)-f(s)|}{|t-s|^\theta(1+|t|^\delta+|s|^{\delta})}.$$
This space is convenient to obtain some Feller properties for the conditional distribution of the fractional Brownian motion given its past. More precisely, by Lemmas 4.1 to 4.3 of \cite{hairer2}, the paths of $B^H$ belong $a.s.$ to ${\cal W}_{\theta,\delta}(\ER)$ when $\theta\in(1/2,H)$ and $\theta+\delta\in(H,1)$. Furthermore, setting $B^{H,u}_t=B^H_{t+u}-B^H_u$, we also deduce from these lemmas that for every non-negative $t$ and $T$,
$${\cal P}_T(\omega,.):={\cal L}((B^{H,t+T}_s)_{s\le 0}| (B^{H,t}_s)_{s\le0}=(\omega_s)_{s\le0})$$
 is a Feller transition on ${\cal W}_{\theta,\delta}(\ER_{-})$ (which does not depend on $t$). \\
%
%denoting by $(\hat{\cal P}^{(n,\gamma)}(\omega,d\alpha, d\beta))_{n\ge1}$ the sequence of empirical measures on $\bar{\cal C}^{\theta}(\ER_
%$$\tilde{{\cal P}}^{(n,\gamma)}(\omega,d\alpha, d\beta)=\frac{1}{n}\sum_{k=1}^n
% {\delta}_{({\bar{X}}^\gamma_{\gamma(k-1)+.}(\omega), B^H_{(k-1)\gamma +.}(\omega) - B^H_{(k-1)\gamma}(\omega) )}
%(d\alpha, d\beta) )$$
Let us now prove that $X^{(\infty,\gamma)}$ is adapted, $i.e.$ that for every $t\ge 0$, $(X^{(\infty,\gamma)}_s)_{s\le t}$ and
$(B^H_s)_{s\ge t}$ are independent conditionally to $(B^H_s)_{s\le t}$. One can check that it is enough to prove that for every $t\ge0$ and (arbitrary large) $T\ge0$, $(X^{(\infty,\gamma)}_s)_{s\le t}$ and
$(B^{H,t+T}_s)_{s\ge0}$ are independent conditionally to $(B^{H,t}_s)_{s\le 0}$ (using on the one hand that $(B^H_s)_{s\le t}$ is trivially $\sigma(B^H_s,s\le t)$-measurable and that for every $u\ge0$, $\sigma (B^{H,u}_s,s\le0)=\sigma(B^{H}_s,s\le u)$).  {To prove this conditional independence property, it is now enough to} show that for every $t\ge 0$,
for every  $T\ge0$, for every bounded continuous functionals $f:\bar{\cal C}^{\theta}([0,t],\ER^d)\rightarrow\ER$, $g:{\cal W}_{\theta,\delta}(\ER_{-})\rightarrow\ER$ and $h:{\cal W}_{\theta,\delta}(\ER_{-})\rightarrow\ER$
\begin{equation}\label{ekek}
\begin{split}
{\ES}[f(X^{(\infty,\gamma)}_{s},s\in[0, t])&g(B^{H,t+T}_{s},s\le 0) h(B^{H,t}_{s},s\le0)]\\
&={\ES}[f(X^{(\infty,\gamma)}_{s},s\in[0, t])\psi^{g}(B^{H,t}_{s},s\le 0)h(B^{H,t}_{s},s\le 0)]
\end{split}
\end{equation}
where $\psi^g((\omega_{s})_{s\le 0})=\ES[g(B^{H,t+T}_s,s\le 0)| (B^{H,t}_s)_{s\le 0}= (\omega_s)_{s\le 0}]={\cal P}_Tg((\omega_{s})_{s\le0})$.
Since ${\cal P}_T(\omega,.)$ is Feller, $\psi^g$ is continuous on ${\cal W}_{\theta,\delta}(\ER_{-})$.\\
Then, using the ergodicity of the increments of $B^H$, we can show as in the beginning of the proof that $(\tilde{{\cal P}}^{(n,\gamma)}(\omega))_{n\ge1}$ is  tight on $\bar{\cal C}^\theta(\ER_+,\ER^d)\times{\cal W}_{\theta,\delta}(\ER)$. Thus, there exists $a.s.$ a sequence $(n_k)$ such that
$${\ES}[f(X^{(\infty,\gamma)}_{s},s\le t)g(B^{H,t+T}_{s},s\le 0) h(B^{H,t}_{s},s\le 0)]=\lim_{k\rightarrow+\infty}\frac{1}{n_k}\sum_{k=1}^{n_k}
H_{k-1} J_k$$
and such that
 $${\ES}[f(X^{(\infty,\gamma)}_{s},s\le t)\psi^g(B^{H,t}_{s},s\le 0) h(B^{H,t}_s,s\le 0)]=\lim_{k\rightarrow+\infty}\frac{1}{n_k}\sum_{k=1}^{n_k}
H_{k-1} \ES[J_k|{\cal F}_{\gm (k-1)+t}]$$
with ${\cal F}_u=\sigma(B_s^H,s\le u)$, $H_k=f(\bx^\gm_{\gm k+s},s\le t) h(B^{H}_{\gm (k-1)+s+t}-B^H_{\gm (k-1)+t},s\le 0)$,
and $J_k={g}( B^H_{\gm (k-1)+s+t+T}-B^H_{\gm (k-1)+t+T}, s\le 0)$.
This implies that it is now enough to prove that
$$\frac{1}{n}\sum_{k=1}^n H_{k-1}\left(J_k-\ES[J_k|{\cal F}_{\gm (k-1)+t}]\right)\xrn{n\nrn}0\quad a.s.$$
This point follows from a decomposition of the above sum in martingale increments and from classical martingale arguments (see proof of Proposition 6 of \cite{cohen-panloup} for a similar argument).
\end{proof}
\subsection{Identification of limits when $\gamma \to 0^+$}
In this part we fix a $H$-fractional Brownian motion $B^H$ on $ \bar{{\mathcal C}}^\theta(\ER,\ER^q) $ 
and we consider a pair $(X^{\infty,\gamma},B^H) $ on $ \bar{{\mathcal C}}^\theta(\ER_+,\ER^d) \times \bar{{\mathcal C}}^\theta(\ER_+,\ER^q) $ 
such that for each $ \gamma > 0 $ the joint distribution is given by Proposition~\ref{prop2}.
\begin{prop}\label{gamma-0} Let $(\gamma_k) $ be a sequence converging to $ 0$ 
such that the distributions of $(X^{\infty,\gamma_k},B^H) $ are converging weakly on
$ \bar{{\mathcal C}}^\theta(\ER_+,\ER^d) \times \bar{{\mathcal C}}^\theta(\ER,\ER^q) $ to  $(X^{\infty},B^H).$ 
Then   $X^{\infty} $ is a stationary  adapted solution  to~\eqref{fractionalSDE0} in the sense of 
Definition~\ref{def:stat-sol}.
\end{prop}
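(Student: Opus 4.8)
The plan is to verify, one at a time, the three requirements defining a stationary adapted solution in the sense of Definition~\ref{def:stat-sol}: that $X^\infty=\Phi(X^\infty_0,B^H)$ almost surely, that the law of $X^\infty$ is invariant under every shift $\theta_t$ with $t\ge0$, and that $X^\infty$ is adapted. All three will be obtained by passing to the limit $\gamma_k\to0$ in the corresponding (discrete) properties of $(X^{\infty,\gamma_k},B^H)$, which are granted by Proposition~\ref{prop2}; in particular $X^{\infty,\gamma_k}=\Phi^{\gamma_k}(X^{\infty,\gamma_k}_0,B^H)$ almost surely for every $k$. The $\gamma$-uniform moment bounds of Proposition~\ref{prop:ntendinfty} and the weak convergence hypothesis on the Polish space $\bar{\mathcal C}^\theta(\ER_+,\ER^d)\times\bar{\mathcal C}^\theta(\ER,\ER^q)$ will be used throughout.

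For the solution property I would adapt the functional argument from the proof of Proposition~\ref{prop2}, replacing the discrete It\^o map by the genuine one. For $t,K>0$ set $F_{t,K}(\alpha,\beta)=\sup_{0\le s\le t}|\alpha_s-\Phi(\alpha_0,\beta_+)_s|\wedge K$, which is bounded and continuous by continuity of $\Phi$ (Proposition~\ref{prop:exist-uniq-smooth-coeff}(ii)); weak convergence then gives $\ES[F_{t,K}(X^\infty,B^H)]=\lim_k\ES[F_{t,K}(X^{\infty,\gamma_k},B^H)]$. Since $X^{\infty,\gamma_k}=\Phi^{\gamma_k}(X^{\infty,\gamma_k}_0,B^H)$, the right-hand term is the expectation of the truncated Euler error $\sup_{0\le s\le t}|\Phi^{\gamma_k}(X^{\infty,\gamma_k}_0,B^H)_s-\Phi(X^{\infty,\gamma_k}_0,B^H)_s|\wedge K$. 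The hard part will be a pathwise convergence estimate for the Euler scheme: for each $t>0$ there exist $\kappa>0$ and a polynomial $Q$ such that $\sup_{0\le s\le t}|\Phi^\gamma(x,B)_s-\Phi(x,B)_s|\le\gamma^\kappa Q(|x|,\|B\|_{\theta,t})$. I would prove it by bounding, on successive subintervals of the random length $\eta(\omega)$ of Lemma~\ref{lemme3}, the difference of the Young integrals $\int\sigma(\Phi^\gamma(\cdot)_{\un u})dB_u$ and $\int\sigma(\Phi(\cdot)_u)dB_u$ (using the freezing estimate \eqref{eq:holder-bnd-with-V} for $|\Phi^\gamma_{\un u}-\Phi^\gamma_u|$ together with the Young bounds of Lemmas~\ref{lemme2}--\ref{lemme3}), treating the drift by Lipschitz continuity of $b$, and closing the loop with a Gronwall argument; here $\kappa$ is positive because $2\theta-1>0$. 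Inserting this bound, using $|x|\le C\sqrt{V(x)}$ and the $\gamma$-uniform bound $\sup_k\ES[V^p(X^{\infty,\gamma_k}_0)]<+\infty$ coming from Proposition~\ref{prop:ntendinfty}(i), together with the finiteness of all moments of $\|B^H\|_{\theta,t}$ (Fernique's lemma), the truncated Euler error has expectation at most $\gamma_k^\kappa\,\ES[Q(C\sqrt{V(X^{\infty,\gamma_k}_0)},\|B^H\|_{\theta,t})]$, which is bounded uniformly in $k$ by Cauchy--Schwarz and hence tends to $0$. Thus $\ES[F_{t,K}(X^\infty,B^H)]=0$ for all $t,K$, forcing $X^\infty=\Phi(X^\infty_0,B^H)$ almost surely.

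Stationarity would be obtained by upgrading the lattice invariance of each $X^{\infty,\gamma_k}$ to a continuous one. Fix $t\ge0$ and a bounded continuous $F:\bar{\mathcal C}^\theta(\ER_+,\ER^d)\to\ER$, and put $j_k=\lfloor t/\gamma_k\rfloor$, so that $j_k\gamma_k\to t$. Invariance of $X^{\infty,\gamma_k}$ under $\theta_{j_k\gamma_k}$ reads $\ES[F(X^{\infty,\gamma_k}_{j_k\gamma_k+\cdot})]=\ES[F(X^{\infty,\gamma_k}_\cdot)]$. The time-shift $(s,\alpha)\mapsto\alpha_{s+\cdot}$ being jointly continuous, the maps $\alpha\mapsto F(\alpha_{j_k\gamma_k+\cdot})$ are bounded continuous and converge uniformly on compact subsets of $\bar{\mathcal C}^\theta(\ER_+,\ER^d)$ to $\alpha\mapsto F(\alpha_{t+\cdot})$; combined with the weak convergence of the first marginal (and the tightness it entails via Prokhorov), both sides pass to the limit and yield $\ES[F(X^\infty_{t+\cdot})]=\ES[F(X^\infty_\cdot)]$. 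As $t$ and $F$ are arbitrary, the law of $X^\infty$ is invariant under every $\theta_t$.

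Finally, adaptedness passes to the limit directly. By Proposition~\ref{prop2} each $(X^{\infty,\gamma_k},B^H)$ satisfies, for all $t,T\ge0$ and bounded continuous $f,g,h$, the conditional independence identity~\eqref{ekek} with $\psi^g=\mathcal P_Tg$; recall that $\psi^g$ is continuous by the Feller property of $\mathcal P_T$ on $\mathcal W_{\theta,\delta}(\ER_-)$ (Lemmas~4.1--4.3 of \cite{hairer2}) and that every term in \eqref{ekek} is a bounded continuous functional of $(X^{\infty,\gamma_k},B^H)$ on $\bar{\mathcal C}^\theta(\ER_+,\ER^d)\times\mathcal W_{\theta,\delta}(\ER)$, a space on which the joint laws converge weakly exactly as in the proof of Proposition~\ref{prop2}. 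Letting $k\to\infty$ on both sides of \eqref{ekek} therefore preserves the identity for $(X^\infty,B^H)$, which is precisely the adaptedness of $X^\infty$. Collecting the three properties shows that $X^\infty$ is a stationary adapted solution of \eqref{fractionalSDE0}, as claimed.
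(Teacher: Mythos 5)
Your treatments of stationarity and adaptedness are essentially the paper's own: the paper also upgrades the lattice invariance of $X^{\infty,\gamma_k}$ to invariance under every $\theta_t$ (it uses cylindrical Lipschitz functionals together with the bound $\ES[\sup_{u,v\in[0,T],|u-v|\le\gamma}|X^{\infty,\gamma}_v-X^{\infty,\gamma}_u|]\rightarrow 0$, where you use uniform-on-compacts convergence of the shifted test functionals plus tightness -- both are standard and valid), and for adaptedness both proofs pass to the limit in the identity \eqref{ekek} using the Feller continuity of $\psi^g$ on ${\cal W}_{\theta,\delta}(\ER_-)$. The genuine divergence is in the solution property, and here the paper's route is markedly lighter than yours. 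The paper never compares the Euler solution with the exact solution. It writes both equations as $\Delta(\cdot)=\Psi(\cdot,B^H)$, resp.\ $\Delta(\cdot)=\Psi^{\gamma}(\cdot,B^H)$, with $\Delta(\alpha)=\alpha-\alpha_0$ and $\Psi(\alpha,\beta)_t=\int_0^t b(\alpha_s)\,ds+\int_0^t\sigma(\alpha_s)\,d\beta_s$, and then only needs the elementary Young discretization bound \eqref{eq:int-Young-ineq}, which compares $\Psi$ and $\Psi^{\gamma}$ along the \emph{same} path $\alpha$: the error is $\|\alpha\|_{\theta,T}\|\beta\|_{\theta,T}\gamma^{2\theta-1}T^{1-\theta}$, with no Gronwall argument and no stability estimate (Lemma \ref{lem:psi-conv}, fed by the moment bounds of Proposition \ref{lemme4}(ii)). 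Passing to the joint weak limit in $\Delta(X^{\infty,\gamma_k})=\Psi^{\gamma_k}(X^{\infty,\gamma_k},B^H)$ then yields $\Delta(X^{\infty})=\Psi(X^{\infty},B^H)$ almost surely. Your route instead requires a pathwise strong-error estimate $\sup_{0\le s\le t}|\Phi^{\gamma}(x,B)_s-\Phi(x,B)_s|\le\gamma^{\kappa}Q(|x|,\|B\|_{\theta,t})$ uniformly over initial conditions, i.e.\ a full discrete-approximation theory for Young SDEs with unbounded drift -- and this is precisely the one step you do not prove.

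Beyond being deferred, that key lemma is doubtful in the form you state it. A Young--Gronwall argument for the \emph{difference} of two solutions proceeds by patching intervals of length $\eta\sim\|B\|_{\theta,t}^{-1/\theta}$ (as in Lemma \ref{lemme3}), each interval contributing a multiplicative factor, so the natural constant is of order $\exp\bigl(Ct\|B\|_{\theta,t}^{1/\theta}\bigr)$ -- exponential, not polynomial. The dissipativity of $b$ is a Lyapunov condition that controls the size of each individual solution (this is why Proposition \ref{lemme4} has polynomial constants), but it does not contract differences of two solutions, so it cannot rescue a polynomial $Q$. This flaw is repairable: since $1/\theta<2$, Fernique's lemma gives $\ES[\exp(c\|B^H\|_{\theta,t}^{1/\theta})]<+\infty$ for every $c>0$, and combined with the $\gamma$-uniform moment bounds on $V^p(X_0^{\infty,\gamma_k})$ (extracted as in the proof of Proposition \ref{prop:ntendinfty}(iii)) and Cauchy--Schwarz, your expectation still tends to $0$. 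So your architecture can be made to work, but at the cost of proving a heavy quantitative approximation lemma whose stated form needs correction; the paper's $\Delta/\Psi$ decomposition is exactly the device that makes this difficulty disappear.
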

\begin{proof}
Let us first introduce 
$$ \tilde{\Phi}(\alpha,\beta)_t := \alpha_0 + \int_0^t b(\tilde{\Phi}(\alpha,\beta)_{s}) ds +   \int_0^t  \sigma(\tilde{\Phi}(\alpha,\beta)_{s}) d \beta_s,$$ and remark that $ \tilde{\Phi}(\alpha,\beta) = \Phi(a,\beta),$ if $\alpha_0=a.$
We want to show that 
\begin{equation}
  \label{eq:tilde-eds}
  X^{\infty}= \tilde{\Phi}(X^{\infty},B^H)  
\end{equation}
almost surely so that $(X^{\infty},B^H) $ is a solution to~\eqref{fractionalSDE0}. 
Let us rewrite the equation with the help of two continuous operators  on $\bar{{\mathcal C}}^\theta(\ER_+,\ER^d) \times \bar{{\mathcal C}}^\theta(\ER_+,\ER^q) $ ~:
$$ \Psi(\alpha,\beta)_t= \int_0^t  b(\alpha_{s}) ds +\int_0^t  \sigma(\alpha_{s}) d \beta_s,$$
and 
$$ \Delta(\alpha)_t= \alpha_t - \alpha_0.$$
Then equation~\eqref{eq:tilde-eds} is equivalent to 
\begin{equation}
  \label{eq:tilde-eds-split}
  \Delta(X^{\infty})= \Psi(X^{\infty},B^H). 
\end{equation}
Let us also consider the discretization of $ \Psi $
$$ \Psi^{\gamma}(\alpha,\beta)_t= \int_0^t  b(\alpha_{\un{s}_{\gamma}}) d s +\int_0^t  \sigma(\alpha_{\un{s}_{\gamma}}) d \beta_s.$$
Obviously~\eqref{fractionalSDE0-disc} can be rewritten
\begin{equation}
  \label{eq:tilde-eds-split-disc}
  \Delta(X^{\infty,\gamma})= \Psi^{\gamma}(X^{\infty,\gamma},B^H). 
\end{equation}
\begin{lemme}
\label{lem:psi-conv}
Let $(\gamma_k)_{k\ge1}$ be a sequence converging to $ 0$ 
such that $(X^{\infty,\gamma_k},B^H)_{k\ge1} $ converges weakly on
$ \bar{{\mathcal C}}^\theta(\ER_+,\ER^d) \times \bar{{\mathcal C}}^\theta(\ER,\ER^q) $ to  $(X^{\infty},B^H).$
Then $\Psi^{\gamma_k}(X^{\infty,\gamma_k},B^H)$ converges weakly on
$ \bar{{\mathcal C}}^\theta(\ER_+,\ER^d) $ to  $\Psi(X^{\infty},B^H).$
\end{lemme}
\begin{proof}
Let $(\alpha,\beta) \in  \bar{{\mathcal C}}^\theta(\ER_+,\ER^d) \times \bar{{\mathcal C}}^\theta(\ER_+,\ER^q).$
A classical result concerning the discretization of Young integrals shows 
that 
$$ | \Psi(\alpha,\beta)_t- \Psi^{\gamma}(\alpha,\beta)_t| \le \|\alpha\|_{\theta,t} \|\beta\|_{\theta,t} \gamma^{2 \theta - 1} t.$$
See for instance~\cite{Coutin12}, Proposition 31 or~\cite{Young36}.
Hence for $T > 0,$
\begin{equation}
  \label{eq:int-Young-ineq}
   \| \Psi(\alpha,\beta)- \Psi^{\gamma}(\alpha,\beta)\|_{\theta,T} \le \|\alpha\|_{\theta,T} \|\beta\|_{\theta,T} \gamma^{2 \theta - 1}  T^{1-\theta}.
\end{equation}
Let $ F $ be any bounded  $K$-Lipschitz functional on $  \bar{{\mathcal C}}^\theta([0,T],\ER^d), $ 
\begin{equation}
  \label{eq:F1-ineq}
|\ES (F(\Psi(X^{\infty,\gamma_k},B^H) ) - \ES (F(\Psi(X^{\infty},B^H) )| \to 0
\end{equation}
as $ k \to \infty.$
Then 
\begin{equation}
  \label{eq:F2-ineq}
|\ES (F(\Psi^{\gamma_k}(X^{\infty,\gamma_k},B^H) ) - \ES (F(\Psi(X^{\infty,\gamma_k},B^H) )| \le K \ES ( \|X^{\infty,\gamma_k}\|_{\theta,T} \|B^H\|_{\theta,T})  T^{1-\theta} \gamma_k^{2 \theta - 1},  
\end{equation}
and using Proposition~\ref{lemme4}\emph{(ii)} the left hand side of~\eqref{eq:F2-ineq} is converging to $0$ as  $ k \to \infty.$
Combining~\eqref{eq:F1-ineq} and this last fact, we get the desired convergence in distribution. 
\end{proof}
\noindent Let us start with 
\begin{equation}
  \label{eq:concl}
\Delta(X^{\infty,\gamma_k})=\Psi^{\gamma_k}(X^{\infty,\gamma_k},B^H),
\end{equation}
and let $ k \to \infty.$ By Lemma~\ref{lem:psi-conv}, the right hand side
of~\eqref{eq:concl} converges to $ \Psi(X^{\infty},B^H) $ and the left hand side 
to $ \Delta(X^{\infty}),$ which, in turn, has the same distribution as $\Psi(X^{\infty},B^H).$\\

%\textcolor{red}{
Now, let us prove that $X^\infty$ is stationary. It is enough to show that 
$\ES[F(X^\infty_.)]=\ES[F(X^{\infty}_{t+.})]$ for every $t\ge0$ and for every functional $F$ defined by $F(\alpha)=\prod_{k=1}^m f_i(\alpha_{t_i})$
where $f_1,\ldots, f_m$ denote Lipschitz continuous functions on $\ER^d$ and $t_1,\ldots,t_m$ belong to $\ER_+$. By Proposition \ref{prop2}, the distribution of $X^{\infty,\gamma}$ is invariant by the time-shift $(\theta_{k\gamma})$ for every $k\in\EN$
so that $\ES[F(X^\infty_.)]=\ES[F(X^{\infty}_{\un{t}+.})]$. The result follows easily by checking that for every $T>0$, 
$$\ES[\sup_{u,v\in[0,T],|u-v|\le\gamma}|X^{\infty,\gamma}_v-X^{\infty,\gamma}_u|]\xrn{\gamma\rightarrow0}0.$$
Finally, it remains to show that $(X^\infty,B^H)$ is adapted. Since $(X^{\infty,\gamma_k})$ converges in distribution
to $X^\infty$ on $\bar{\cal C}^\theta(\ER_+,\ER^d)$ and since $B^H$ belongs to ${\cal W}_{\theta,\delta}$ (with $\theta\in(1/2,H)$
and $\theta+\delta\in(H,1)$), $(X^{\infty,\gamma'_k}, B^H)$ converges to $(X^\infty,B^H)$ for $\gamma'_k$ a subsequence of $ \gamma_k.$
Then, we can let $\gamma$ go to $0$ in equality \eqref{ekek} and the result follows.%}
\end{proof}
\section{Simulations}\label{section6}
In this section, we give an illustration of the application of our procedure for  a one-dimensional toy equation:
$$dX_t=-X_tdt+(4+\cos(X_t))dB_t^H.$$
We propose to compute an estimation of the density of the (marginal) invariant distribution in this case.
We denote it by $\nu^H_0$. By Theorem \ref{principal1}, for every bounded continuous function $f:\ER^d\rightarrow\ER$,
$$\lim_{\gamma\rightarrow0}\lim_{n\rightarrow+\infty}{\cal P}^{(n,\gamma)}_0(\omega,f)=\nu_0^H(f).$$
\noindent The first step is to simulate the sequence $(B^H_{\gamma k}-B^H_{\gamma (k-1)})_{k=1}^n$. We use the  Wood-Chan method  (see \cite{wood}) which is based on the embedding of the covariance matrix of the fractional increments in a symmetric circulant matrix (whose eigenvalues can be computed using the Fast Fourier Transform).\\
Then, we compute $K_h\ast{\cal P}^{(n,\gamma)}_0$ where $K_h$ is the Gaussian convolution kernel defined by
$K_h(x)=\frac{1}{\sqrt{2\pi}h}\exp(-\frac{x^2}{2h})$. Note that $K_h\ast{\cal P}^{(n,\gamma)}_0(x_0)={\cal P}^{(n,\gamma)}_0(K_h(x_0-.))$, where, for a measure $\mu,$ and a $\mu$-measurable function f, we set $\mu(f)=\int fd\mu$.
In Figure \ref{figure1} is depicted the approximate density with the following choices of parameters
%\footnote{In the diffusion case,
%the long-time error is  $O((\gamma n)^{-\frac{1}{2}})$ and the discretization one is $O(\gamma)$ so that an optimal choice is to take $gamma\approx n^{-1/3}$. Even if any rate of convergence result is proved in this context, we keep these:
$$ n=10^7,\quad \gamma=0.05\quad h=0.2,\quad H=\frac{3}{4}.$$
We choose to compare it with the density of the invariant distribution when $H=1/2$. Note that in this case, the invariant distribution is (semi)-explicit (as for every ergodic one-dimensional diffusion) and is given by
$$\nu^{\frac{1}{2}}_0(dx)=\frac{M(dx)}{M(\ER)}\quad\textnormal{where}\quad M(dx)=\frac{1}{(4+\cos x)^2}\exp\left(-\int_0^x\frac{2u}{(4+\cos u)^2}du\right)dx.$$ 
We observe that the distribution when $H=3/4$ has heavier tails than in the diffusion case.
\begin{figure}[htbp]
\begin{center}
\includegraphics[width=7cm]{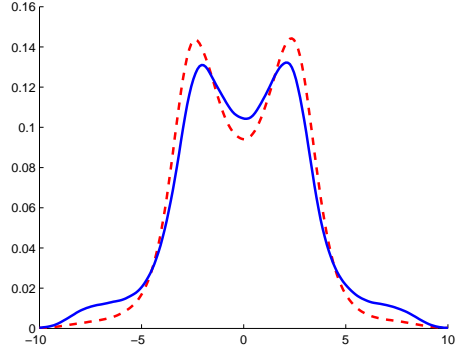}
\caption{Approximate density of $\nu_0^H$ (continuous line) compared with that of $\nu^{\frac{1}{2}}_0$ (dotted line)\label{figure1}}
\end{center}\end{figure}
Finally, in order to have a rough  idea of the rate of convergence, we depict in Figure \ref{figure2} the approximate densities for different values of $n$ keeping the other parameters unchanged.
\begin{figure}[htbp]
\begin{center}
\includegraphics[width=7cm]{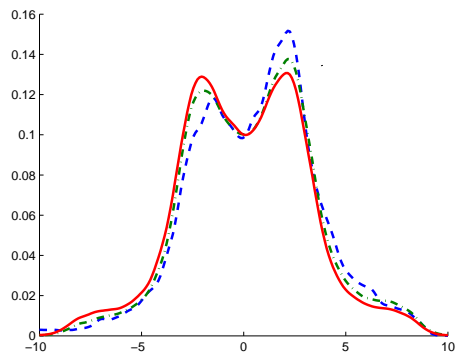}
\caption{Approximate density of $\nu_0^H$ for $n=10^5$ (dotted line), $n=10^6$ (dash-dotted line), $n=10^7$ (continuous line)\label{figure2}}
\end{center}\end{figure}
\begin{Remarque}
As mentioned before, this section is only an illustration. In fact, there are (many) numerical open questions. For  the estimation of the error, it would be necessary for a function $f$ to get some rate of convergence results for   ${\cal P}^{(n,\gamma)}_0(f)-\nu_H(f)$ (long-time error) and for $\nu^{H,\gamma}_0(f)-\nu_0^H(f)$ (discretization error) where $\nu^{H,\gamma}_0$
denotes the initial distribution of the stationary Euler scheme with step $\gamma$. Note that in the diffusion case, it can be shown  (under some appropriate assumptions that the long time error is about $(\gamma n)^{-\frac{1}{2}}$ (see \cite{bhatta82} for the corresponding result in the continuous case) whereas the discretization error is $O(\gamma)$ (see \cite{talay}, Theorem 3.3 for a similar result with the Milstein scheme). Finally, even if the Wood and Chan simulation method  is fast and exact, it requires a lot of memory because of the Fast Fourier Transform. On Matlab, for instance, this implies that we can not take $n$ greater than $2.10^7$. Thus, it could be interesting to study  some discretization schemes based on some approximations of the fBm-increments simulated, which consumes less memory.   
 \end{Remarque}
\section{Appendix}
\textbf{Proof of Proposition \ref{unicitemesinvariante}}
Let us show that   $(\bar{X}_{\gamma k})$ is  a \textit{skew-product} in the sense of~\cite{hairer09} as follows.
For a fractional Brownian $B^H$ motion on $\ER$, set for every $n\in\mathbb{Z}$ $\Delta_n^{\gamma}=B^{H}_{(n+1)\gamma}-B^H_{n\gamma}$.  Setting ${\cal W}:=(\ER^d)^{\Z_{-}}$, we then introduce the regular conditional probability $\bar{\cal P}^{\gamma}:{\cal W}\rightarrow{\cal M}_1(\ER^d)$ defined by\footnote{
Note that since $(\Delta_n^{\gamma})_{n\in\Z}$ is a stationary sequence, ${\cal L}(\Delta_1^{\gamma}|(\Delta_k^{\gamma})_{k\le0}=\omega)={\cal L}(\Delta_{n+1}^{\gamma}|(\Delta_{n+k}^{\gamma})_{k\le0}=\omega)$ for every $n\in\mathbb{Z}$.}: 
$$\bar{\cal P}^{\gamma}(\omega)={\cal L}(\Delta_1^{\gamma}|(\Delta_k^{\gamma})_{k\le0}=\omega)$$
and denote by ${\cal P}^{\gamma}$ the Feller transition on ${\cal W}$ defined  for every measurable function $f:{\cal W}\rightarrow \ER$
by ${\cal P}^{\gamma}f(\omega)=\int_{\ER^d} f(\omega\sqcup \tilde{\omega})\bar{\cal P}^{\gamma}(\omega,d\tilde{\omega})$ where for $\omega\in(\ER^d)^{\mathbb{Z}_-}$ and $\tilde{\omega}\in\ER^d$, $\omega\sqcup\tilde{\omega}=(\ldots,\omega_{_2},\omega_{_1},\omega_{0},\tilde{\omega})$.
Setting ${ \Phi}^{\gamma}(x,\tilde{\omega})=x+\gamma b(x)+\sigma(x) \tilde{\omega}$ and  $\PE_H^{\gamma}:={\cal L}((\Delta_n)_{n\le0})$,
we have defined a skew-product $({\cal W},\PE_H^{\gamma},{\cal P}^{\gamma},\ER^d,{ \Phi}^{\gamma})$ with  the transition operator ${\cal Q}^{\gamma}$ on $\ER^d\times{\cal W}$ defined by
$${\cal Q}^{\gamma} f(x,\omega)=\int f(\Phi^\gamma(x,\omega')){\cal P}^{\gamma}(\omega,d{\omega}'),$$
which describes the dynamics of the Euler scheme. \\
Then, thanks to Theorem 1.4.17 of \cite{hairer09}, uniqueness of the adapted and stationary discrete Euler scheme $(\bar{X}_{\gamma k})$ (in distribution) holds, if the skew-product $({\cal W},\PE_H^{\gamma},{\cal P}^{\gamma},\ER^d,{ \Phi}^{\gamma})$ is 
strong Feller and topologically irreducible (in the sense of Definition 1.4.6 and 1.4.7 of \cite{hairer09}).\\
First, write $\tilde{\omega}=(\tilde{\omega}^1,\ldots,\tilde{\omega}^q)$ and $\Phi^\gamma=(\Phi^\gamma_1,\ldots,\Phi^\gamma_d)$. Denote by $M^{\Phi}(x,\tilde{\omega})$ the (discrete) Malliavin covariance matrix of $\Phi$ defined by
$$\forall (x,\tilde{\omega})\in\ER^d\times\ER^d\quad\textnormal{and}(i,j)\in\{1,\ldots,d\}^2,\quad M^{\Phi}_{i,j}(x,\tilde{\omega}):=\sum_{k=1}^d \partial_{\tilde{\omega}^k}\Phi^{\gamma}_i(x,\tilde{\omega})\partial_{\tilde{\omega}^k}\Phi^{\gamma}_j(x,\tilde{\omega}).$$
Thus, $M^\Phi(x,\tilde{\omega})=(\sigma\sigma^*)(x)$ and since $\sigma^{-1}$ is bounded (and continuous), it follows that 
$x\rightarrow({\rm det}(M^\Phi)^{-1}(x,\omega)$ is bounded continuous. 
Second, the functions $D_\omega\Phi$, $D_\omega D_x \Phi$ and $D^2_\omega \Phi$ are clearly bounded continuous. Finally, the sequence $((\Delta_n^\gamma)^1)$ has a spectral density $f$ that  satisfies $\int_{-\pi}^\pi (f(x))^{-1}dx<+\infty$ (see $e.g.$ \cite{beran} for an explicit expression of $f$). Thus, it follows from Theorem 1.5.9 of \cite{hairer09} that the skew-product is strong Feller.\\
For the topological irreducibility, it is enough to show that for every $(x,\omega)\in\ER^d\times{\cal W}$, for every $(y,\varepsilon)\in\ER^d\times\ER_+^*$,  ${\cal Q}(x,\omega,B(y,\varepsilon)\times{\cal W})>0$ . Since $\sigma$ is invertible, the map $\Phi$ is controllable in the following sense: $\Phi(x,\tilde{\omega}_x)=y$ has a (unique) solution $\tilde{\omega}\in\ER^q$, for every $x,y\in\ER^d$. Furthermore, $b$ and $\sigma$
being continuous, for every $\varepsilon>0$, there exists $r_\varepsilon$ such that for every $\tilde{\omega}\in B(\tilde{\omega}_x,r_\varepsilon)$, $\Phi(x,{\tilde{\omega}})\in B(y,\varepsilon)$. Thus    
$${\cal Q}(x,\omega,B(y,\varepsilon)\times{\cal W})\ge \bar{\cal P}(\omega,B(\tilde{\omega}_x,r_\varepsilon))>0,$$
since $\bar{\cal P}(\omega,.)$ is Gaussian with positive variance. This concludes the proof.\\

{\bf Acknowledgment}

We would like to thank the anonymous referee for his/her careful reading and his/her  suggestions that helped us
to improve the first version of this article. 

%\bibliographystyle{plain}
%\bibliography{biblio_cohen_panloup}

\end{document}